\documentclass[reqno]{article}
\usepackage{preamble}
\addtotheorempostheadhook{\leavevmode\phantomsection}
\title{\sc Reconstruction of slices of cubes}

\author{\normalsize Marie-Charlotte Brandenburg, Juliana Curtis, Chiara Giardino, Chiara Meroni, Arnau Padrol}

\date{}

\begin{document}
\maketitle

\begin{abstract}
We prove that the combinatorial type of a slice of the cube by a hyperplane is completely determined by its graph, within the class of cube slices. We also show that such a result cannot extend to slices of arbitrary simple polytopes by constructing a simple polytope that has two combinatorially distinct slices with isomorphic graphs.
\end{abstract}

\section{Introduction}

The combinatorial type of any $3$-dimensional polytope is completely determined by its graph~\cite{whitney1932}. In contrast, starting in dimension $4$ it is not hard to find combinatorially distinct polytopes with the same graph. For example, all neighborly polytopes on $n$ vertices in dimension~$d\geq 4$ share the same graph, the complete graph, even though there are superexponentially many combinatorial types~\cite{Pad13}. This shows in particular that not even the dimension is determined by the graph.

There are, however, special families of polytopes whose complete combinatorial structure can be read from their graph. A remarkable case are  simple polytopes. Blind and Mani showed that the combinatorial type of a simple polytope is completely determined by its graph~\cite{BlindMani1987}, and later Kalai provided a very simple and elegant proof~\cite{Kalai1988}. In~\cite{Joswig00:ReconstructNonSimplePolytope}, Joswig extended this proof and showed that non-simple polytopes could be recovered from their graph provided that some additional local information on the neighborhoods of the vertices, the edge labeled vertex figure, is given.

Many similar results are better formulated in terms of class reconstructibility, in which only a constrained family of polytopes is considered \cite{PVS22:Reconstructibility}. 
There are actually two slightly different versions of the problem that have been used in the literature: Within the class, one may ask whether the graph uniquely determines the combinatorial type of the polytope or, slightly stronger, if it completely determines its face lattice on the given labeled vertex set (i.e., which subsets of vertices of the graph form a face). These notions can differ if there are automorphisms of the graph that do not extend to an automorphism of the polytope, as we will see below (e.g. in \cref{ex:ambiguous_recovery}).

More precisely, for a given family of polytopes $\mathcal P$, 
\begin{enumerate}
    \item We say that $P\in \mathcal P$ is \defn{class reconstructible} within $\mathcal P$ if, for all $Q\in\mathcal P$,
    \[
    G(P)\cong G(Q) \qquad\Longrightarrow\qquad P\cong Q.
    \]
    Thus, under the assumption that $P$ belongs to $\mathcal P$, its graph determines the isomorphism class of its face lattice.
    The whole class $\mathcal P$ is called \defn{class reconstructible} if this holds for every $P \in  \mathcal{P}$.

    \item We say that $P \in \mathcal P$ is \defn{labeled class reconstructible} within $\mathcal P$ if, 
    for all $Q\in\mathcal P$, every bijection $\verts(P)\to \verts(Q)$ inducing a graph isomorphism between $G(P)$ and $G(Q)$ extends to an isomorphism of face lattices. That is, 
   under the assumption that $P\in\mathcal P$, the subsets of $\verts(P)$ that are vertex sets of faces of $P$ are uniquely determined by the isomorphism type of its graph.
    The whole class $\mathcal P$ is called \defn{labeled class reconstructible} if this holds for every $P \in  \mathcal{P}$.
\end{enumerate}

Note that, with these definitions, the dimension is not regarded as part of the input. Thus, when we say that a polytope is (class) reconstructible from its graph, we mean that the graph alone determines both its dimension and its face lattice. Of course, the dimension can be added as a restriction by limiting to a family~$\mathcal P$ consisting of only $d$-polytopes.

With this convention, the classical reconstruction results for simple polytopes \cite{BlindMani1987,Kalai1988}, as well as several extensions to polytopes close to being simple \cite{pinedavillavicencio2018excessdegreepolytope,pinedavillavicencio2018polytopesclosesimple}, are naturally viewed as labeled class reconstruction results. The same is true for zonotopes \cite{BEZ90:ReconstructZonotope}, matroid base polytopes \cite{PVS22:Reconstructibility}, and, from the dual graph, capped cubical
polytopes \cite{Joswig00:ReconstructNonSimplePolytope}.

Under the classical convention, where $d$ is given, some of these results become unrestricted reconstruction statements. For instance, a $d$-polytope whose graph is $d$-regular is necessarily simple, and more generally several classes close to being simple can be recognized from the graph once $d$ is known. We emphasize that our convention is stronger: throughout, the dimension must itself be recovered from the graph.

In \cite{BraMer25:6Cube}, it was observed that \emph{slices, namely hyperplane sections, of cubes} of dimension up to~$6$ (and~$7$ for central slices) were completely determined by their graph, which led to the question whether the same could be true in any dimension~\cite[Question~1]{BraMer25:6Cube}. That is, if we are given the additional information that a polytope $P$ is a slice of a cube, can we reconstruct its face lattice from the graph? Our main result positively answers this open question:

\begin{theorem}\label{thm:mainthm}
The family of cube slices is class reconstructible.
\end{theorem}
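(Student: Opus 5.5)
Let $P = C_d \cap H$ be a slice of the cube $C_d=[0,1]^d$ by a hyperplane $H$ meeting its interior. The plan is to recover from the graph $G(P)$ the combinatorial data of the slice itself, namely which cube edges $H$ crosses and which cube vertices $H$ contains, up to a symmetry of the cube. We then read off the face lattice from that data.

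Every face of $P$ has the form $F\cap H$ for a face $F$ of $C_d$. So the face lattice of $P$ is determined by the sign vector of $H$ on the vertices of the cube: for each $v\in\{0,1\}^d$, whether $v$ lies in $H^+$, in $H^-$, or in $H$. Concretely, vertices of $P$ are cube vertices in $H$ together with one point on each cube edge crossing $H$. A cube face $F$ yields a face of $P$ exactly when $F$ is not strictly on one side, and $F\cap H$ is determined by the signs on $\verts(F)$.

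It therefore suffices to show that $G(P)$ determines this sign vector up to cube symmetry and swapping $H^+$ with $H^-$. Since only class reconstructibility is claimed, we may lose the labeling.

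\textbf{Step 1 (generic case).} Assume first that $H$ contains no cube vertex. Then $P$ is simple of dimension $d-1$, since its vertex figures are vertex figures of a simple polytope restricted to a generic hyperplane. In this case Blind--Mani/Kalai \cite{BlindMani1987,Kalai1988} already recovers the face lattice from the graph within simple polytopes, and simplicity together with the dimension are visible from the graph as regularity of degree $d-1$. The real content is therefore the non-simple case, where $H$ passes through cube vertices.

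\textbf{Step 2 (non-generic case).} Here I would recognize two kinds of vertices in $G(P)$.
\begin{itemize}
\item Vertices of $P$ that are cube vertices $v\in H$. A neighbourhood argument should identify them: the vertex figure of $P$ at such a $v$ is a slice of a simplex, namely the cube's vertex figure cut by $H$ through the apex, which typically gives high degree and products of simplices.
\item Vertices that are edge midpoints (crossing points). Such a vertex has degree exactly $d-1$.
\end{itemize}
I would then reconstruct the 2-faces: they are slices of cube 2-faces or 3-faces, so they are triangles, quadrilaterals, pentagons or hexagons. I would try to detect them as induced cycles of length at most $6$ satisfying a local condition, in the spirit of Joswig's edge-labelled vertex figure approach \cite{Joswig00:ReconstructNonSimplePolytope}. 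Concretely, I aim to reconstruct for each vertex of $P$ its vertex figure, since that is Joswig's hypothesis, and then invoke his theorem.

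\textbf{Step 3 (recovering the cube coordinates).} As an alternative or complementary route, I would build the abstract sign data directly. Each crossing vertex corresponds to an edge $\{u,u+e_i\}$ with $u\in H^-$ and $u+e_i\in H^+$. Its $d-1$ incident edges of $P$ correspond to the cube 2-faces containing that edge, one for each direction $j\ne i$. Assigning directions $e_i$ consistently along the graph, via parallel classes of edges propagated through quadrilateral 2-faces, should recover a "cube labeling". This labeling in turn determines $H^+\cap\{0,1\}^d$ as an up-closed-like threshold set, by connectivity of $H^\pm$ in the cube graph.

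I expect the main obstacle to be the non-generic case, specifically certifying the 2-faces (or Joswig's edge-labelled vertex figures) at cube vertices lying in $H$, where $P$ is far from simple. There, short cycles in $G(P)$ need not be faces, and automorphisms of $G(P)$ may fail to extend. This is why I target class rather than labeled reconstructibility, and this step may need a case analysis on the number of cube vertices in $H$ adjacent to a given one.
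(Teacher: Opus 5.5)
\textbf{Main gap: the 2-face test is not local.} Your Step 2 has the paper's architecture: recover the edge-labelled vertex figures at the non-simple vertices, then apply Joswig's theorem. But the step you leave as ``a local condition'' on short induced cycles is where essentially all the work lies, and it is not local.

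Some cases are local:
\begin{itemize}
\item every $3$-cycle is a $2$-face;
\item an induced $4$-cycle through a non-simple vertex that contains a simple vertex is a $2$-face;
\item an induced $5$-cycle with one non-simple and four simple vertices is a $2$-face iff it is $P_3^*$-convex, i.e.\ no two vertices at distance two on the cycle share a neighbour off the cycle.
\end{itemize}
However, an induced $4$-cycle all of whose vertices are non-simple is a $2$-face exactly when it is not contained in a single copy of $Q$ in $P\cong Q\times C_c$ with $c$ maximal. In an irreducible slice it is never a face. So you must read the geometric cube factor $c$ off the graph, and the Cartesian prime factorization of $G(P)$ does not give it: the slice of $C_5$ by $10x_1-10x_2+x_3-x_4-x_5=0$ is irreducible, yet $G(P)\cong\Gamma\square K_2$.

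Two ideas are missing here:
\begin{itemize}
\item \emph{The graph overestimates $c$ by at most one.} A cube-factor edge $[\mathbf 0,y]$ at a non-simple vertex has endpoints of equal degree and lies in no triangle. This forces a fake one to be $[\mathbf 0,e_a+e_b]$ with $u_a=-u_b$ strictly dominating all other nonzero $|u_i|$, so there is at most one.
\item \emph{A test between the two candidate values of $c$.} Count the candidate $2$-faces at a non-simple vertex $v$ under each hypothesis and compare with $\mathcal F_2(v,c)=\tfrac12\bigl(\delta(d-2)+c(\delta-d+1)\bigr)$, where $\delta=\deg v$. This is the number of edges of $\mathrm{vf}_P(v)\cong\Delta_{c-1}\ast(\Delta_{a-1}\times\Delta_{b-1})$. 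If the extra factor is fake, the extra all-non-simple $4$-cycles number $(a-1)(b-1)$, while the formula grows only by $(a-1)(b-1)/2$.
\end{itemize}
Knowing the $2$-faces at $v$ gives the labelled graph of $\mathrm{vf}_P(v)$. Since this polytope has at most $\dim+2$ facets, Gr\"unbaum's classification recovers its facets as vertex subsets. That is the edge-labelled vertex figure Joswig needs, not just the vertex figure up to isomorphism. Note also that cube vertices in $H$ can be simple; the paper first perturbs $H$ so that all simple vertices are edge crossings, and the cycle analysis relies on this.

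\textbf{Step 1: regularity does not certify simplicity.} The dimension is not part of the input. The octahedron $\Delta(4,2)$ is a slice of $C_4$ with a $4$-regular graph, yet it is $3$-dimensional and not simple. In general, a non-simple slice without simple vertices has only cube vertices, since edge crossings are always simple. Hence it is $\Delta(n,k)\times C_m$ with $2\le k\le n-2$, with regular graph $J(n,k)\square G(C_m)$. To apply Blind--Mani within the class, and otherwise to read $d$ off the minimum degree, you must show this graph is never the graph of a simple polytope. The paper does this by exhibiting two induced $4$-cycles in one Johnson fibre meeting in two non-adjacent vertices; these cannot both be $2$-faces of a simple polytope.

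\textbf{Step 3: the sign-vector goal is unattainable.} Your opening goal, pursued in Step 3, is to recover the sign vector of $H$ on $\{0,1\}^d$ up to cube symmetry. In $C_3$, the planes $x_1=\tfrac12$ and $x_2+x_3=\tfrac12$ both cut out quadrilaterals whose vertices are all edge crossings. Yet one has $4$ cube vertices on each side and the other has $2$ and $6$. So the sign data is not a function of the combinatorial type, let alone of the graph. Drop that route and work only with the edge-labelled vertex figures.
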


It is natural to ask whether these reconstruction results extend to sections of higher codimension subspaces. Such sections include many interesting polytopes: for example, every centrally symmetric polytope is a central section of a cube. Without any restriction this is hopeless, since every polytope is a section of a simplex and simplices are sections of the cube. In fact, in \Cref{thm:lowerdimsections} we prove that graph reconstruction fails for cube sections of codimension at least four. Whether graph reconstruction is possible for cube sections of codimensions two and three remains open.

Another natural question to ask is whether this extends to slices of other families of polytopes. A classical classification result of Grünbaum implies that slices of the $d$-simplex are reconstructible from their graph (see \cref{cor:simplexslices}).
However, it is not true that one can always determine slices of a given polytope from their graphs. For example, let $P$ and $Q$ be any $(d-1)$-dimensional polytopes with the same graph but distinct combinatorial types. Let $R$ be the convex hull of $P \times \{0\}$ and $Q \times \{1\}$. Then $R$ has $P$ and $Q$ as facets, in particular as slices, which prohibits reconstruction. It is not clear to which extent the techniques in this paper can be further exploited for polytopes other than simplices and cubes. We pose this as a question.
\begin{question}\label{question:slice_of_which_poly}
	For which families of polytopes are the combinatorial types of their slices entirely determined by their graphs?
\end{question}

An obvious candidate would be to extend the results to slices of simple polytopes, as they share the same vertex-figure properties (those in \cref{prop:deg_vertices}) that we deeply exploit in our proof. However, our second main result is that \cref{question:slice_of_which_poly} has a negative answer for simple polytopes. Classical examples of polytopes with isomorphic graph but non-isomorphic face lattice do not help us since they cannot be realized as slices of a simple polytope. Here we construct two such polytopes in dimension four and prove the following.  
\begin{theorem}\label{thm:slice_simple_polytope}
    For any $d\geq 5$, slices of simple $d$-dimensional polytopes are not class reconstructible.
\end{theorem}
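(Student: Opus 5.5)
The plan is to build one simple $5$-polytope $S$ that has two combinatorially distinct $4$-dimensional slices $P_1,P_2$ with isomorphic graphs, and then to push this up to every $d\geq 5$.

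\textbf{Step 1: the two slices.} I want $4$-polytopes $P_1$ and $P_2$ with isomorphic graphs and distinct face lattices, whose local structure looks like that of a slice of a simple polytope. Every vertex of a hyperplane slice of a simple $d$-polytope lies on an edge of the polytope, so its vertex figure is a prism over a $(d-2)$-simplex; this is the vertex-figure property of \cref{prop:deg_vertices}. For $d=5$ each vertex therefore has degree $4$ and vertex figure $\Delta_1\times\Delta_2$, a triangular prism. The classical pairs with equal graphs, such as neighborly polytopes, do not satisfy this condition. I would instead look for the pair among $4$-polytopes whose facets are all prisms, simplices and cubes. A natural choice is a pair of "slice-type" polytopes obtained by truncating all vertices of two combinatorially different simplicial $4$-polytopes with the same $1$-skeleton-type data. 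Truncation produces exactly the prism-like vertex figures, and the graph of a truncated polytope is determined by the graph together with the vertex-figure graphs. The concrete pair I would try first is two edge-truncations, or a "prism-over" construction, of $4$-polytopes whose facets differ by a flip inside a bipyramid. Explicit coordinates would then be checked by computer, comparing the graph isomorphism and the face lattices.

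\textbf{Step 2: realize both as slices of one simple $5$-polytope.} A $4$-polytope $P$ in the hyperplane $x_5=0$ is a slice of a simple polytope if it is the section of some simple $S$. The approach is to construct $S$ by giving, for each facet $F$ of $P$, a facet hyperplane of $S$ meeting $\{x_5=0\}$ in $\operatorname{aff}F$, tilted generically. Doing this with two parallel hyperplanes $x_5=0$ and $x_5=1$ gives the tilted-hyperplane analogue of the $\operatorname{conv}(P\times\{0\}\cup Q\times\{1\})$ example from the introduction. I would choose $S=\{x: \langle a_i,x'\rangle + t_i x_5\le b_i\}$ whose sections at $x_5=0$ and $x_5=1$ are $P_1$ and $P_2$. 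This forces $P_1$ and $P_2$ to share a facet-normal structure, which is the main obstacle. I would try to obtain $P_2$ from $P_1$ by moving facet hyperplanes parallel to themselves, in the same normal fan family, so that the combinatorics change by a flip that preserves the graph. Simplicity of $S$ then holds for generic $t_i$, after a small perturbation that does not change either slice. Choosing $P_1,P_2$ as two generic polytopes with the same facet normals, that is, two points in different chambers of the type cone or secondary-fan-like arrangement, makes this automatic. Each is then a slice of the simple polytope obtained by interpolating the right-hand sides linearly in $x_5$ and adding two caps.

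\textbf{Step 3: higher dimensions.} If $P_1$ and $P_2$ are slices of a simple $5$-polytope $S$, then $P_i\times\Delta_1^{\,d-5}$ are slices of $S\times\Delta_1^{d-5}$, which is simple, using the hyperplane extended trivially. Taking the product with an interval preserves combinatorial distinctness. It also preserves graph isomorphism, since the graph is the Cartesian product of the graphs. This gives the result for every $d\geq5$.

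The hardest part is Step 1 together with the shared-normal requirement of Step 2: finding a graph-preserving flip between two polytopes with the same facet normals. I would search computationally among small polytopes with about $8$ to $10$ facets.
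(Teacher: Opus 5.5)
Your Step 3 agrees with the paper. Steps 1 and 2, however, do not produce a proof, and the local criterion guiding them is wrong in a way that makes the search self-defeating.

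A vertex of a slice $R\cap H$ of a simple $d$-polytope $R$ need not lie in the relative interior of an edge of $R$. If it does, it is a \emph{simple} vertex of the slice, with vertex figure a $(d-2)$-simplex and degree $d-1$. If it is a vertex of $R$, its vertex figure is $\Delta_p\join(\Delta_q\times\Delta_r)$ with $p+q+r+1=d-2$ (\cref{lemma:vertex_figure}). For $d=5$ the admissible vertex figures are therefore the tetrahedron, the square pyramid and the triangular prism, so the right condition is that every vertex lies in at most $5$ facets.

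Your condition ``degree $4$'' would make $P_1,P_2$ simple $4$-polytopes. These are determined by their graphs (Blind--Mani, Kalai), so no counterexample can exist there. Note also that a triangular-prism vertex figure has degree $6$, not $4$.

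The same issue recurs in Step 2. Two \emph{generic} polytopes with the same facet normals are simple, hence again graph-reconstructible; any counterexample needs non-simple vertices, i.e.\ non-generic right-hand sides. Moreover, once both slices are prescribed at $x_5=0$ and $x_5=1$ by one common system of inequalities, the tilts $t_i$ are forced to be the differences of the right-hand sides. So you cannot invoke genericity of the $t_i$, and simplicity of $S$ (and of the unspecified caps) is left unargued.

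The paper supplies the two missing ingredients.

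\emph{The pair.} It exhibits a specific $4$-polytope $R$ whose graph has an automorphism $\phi$ not induced by any combinatorial automorphism. This $\phi$ swaps two simple vertices $r_3,r_7$ whose incident facets have different vertex counts. Truncating a simple vertex acts on the graph by the star--clique operation, so truncating at $r_3$ and at $r_7$ gives graph-isomorphic but combinatorially distinct polytopes, in which each vertex lies in at most $5$ facets (\cref{ex:simple_slices_counterexample}).

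\emph{The realization.} No common facet normals are needed. \cref{prop:embed_and_glue} lifts each $(d-1)$-polytope with at most $d$ facets per vertex separately to a simple $d$-polytope, using generic heights $h_i$ and a generic simplex to make it bounded. It then sends a vertex of each lift to infinity by a projective transformation and intersects the two resulting unbounded polyhedra after translating them far apart. The result is one simple polytope carrying both slices.

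Without a concrete pair and a gluing argument of this kind, your proposal is a search strategy rather than a proof.
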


\paragraph{Paper overview.} This article is organized as follows. In \Cref{sec: preliminaries}, we recall basic definitions concerning polytopes, vertex-edge graphs, graph operations and slices, together with classical results from the literature showing that a polytope can be reconstructed from the edge-labeled vertex figures of its vertices.
Starting from \Cref{sec: structure of cube slices}, we focus on slices of cubes and study their combinatorial structure. We prove several structural properties of these slices and show that a slice of a cube with all vertices that are non-simple must be a Cartesian product of a cube with a hypersimplex.

The proof of \cref{thm:mainthm} has two parts. The first part is global: we recover the ambient dimension $d$ in which $P = \cube \cap H$, and we recover the maximal $c$ such that $P\cong Q\times\cube[c]$. Regarding dimension, we show how to recover it in \Cref{sec: dim}. Regarding the product decomposition, the main obstacle is that the graph $G(P)$ may present more Cartesian factors than the polytope itself, so it can suggest a decomposition with factor $\cube[t]$ for $t>c$. 

In \Cref{sec:2-faces}, we transition to the second part, which is indeed local: to recognize the correct factor $\cube[c]$, a detailed analysis of the induced cycles of $G(P)$ is required.
Then, in \Cref{sec: rec thm}, it is possible to reconstruct geometrically the edge-labeled vertex figure of each vertex of $P$ from their graph. For a diagram with the main steps of the proof, see \cref{fig:proof_scheme}.

Finally, in \Cref{sec: generalizations}, we explore possible extensions of our result. First, we consider slices of cubes with codimension greater than one. We show that, in general, there exist non-combinatorially equivalent slices of the same codimension, greater or equal than~$4$, with isomorphic vertex-edge graphs. Codimension two and three remains open. We also consider extending our results to hyperplane sections of other polytopes. We prove \cref{thm:slice_simple_polytope}, discarding an extension to slices of all simple polytopes. The extension to products of simplices remains open.

\paragraph{Usage of Large Language Models.} The polytopes appearing in \cref{ex:simple_slices_counterexample} were
found by Large Language Models (LLMs), specifically ChatGPT (GPT-5.6 Sol Pro). The truncation strategy in this example was developed through an iterative dialogue between the authors and the LLM. Once this strategy had been established, ChatGPT was used to carry out an exhaustive search for suitable candidates among $4$-dimensional polytopes with few vertices \cite{FirschingComplete4polys2020}. 

The authors also used GitHub Copilot and ChatGPT to assist with the implementation of the construction in the proof of \cref{prop:embed_and_glue}, which led to the counterexample presented in \cref{ex:simple_counterexample}. All computational results relevant to the paper were independently checked by the authors using \texttt{SageMath} and \texttt{Oscar.jl}. 

All mathematical statements, arguments, and proofs in the paper were formulated, verified, and written by the authors. The authors take full responsibility for the mathematical content of the paper.

\paragraph{Acknowledgments.} 
The authors would like to thank Christian Haase and Teemu Lundstr\"om for interesting discussions on the topic.

This research was partially conducted during the Intensive Research Program on Combinatorial Geometries and Geometric Combinatorics, held at the Centre de Recerca Matemàtica in October-November 2025, and funded by the Severo Ochoa and María de Maeztu Program for Centers and Units of Excellence in R\&D (CEX2020-001084-M), the Institute de Matemàtica de la Universitat de Barcelona, and the Spanish projects PID2022-137283NB-C21 and RED2024-153572-T of MICIU/AEI /10.13039/501100011033.
We thank all organizers and participants of the program for the beautiful interactions and ideas exchange. 

MB is supported by the SPP 2458 ``Combinatorial Synergies'', funded by the Deutsche Forschungsgemeinschaft (DFG, German Research Foundation).
CG is supported by ``the Combinatorial Polytope Theory'' (COMPOTE) project (DFG 541393733). 
CM is supported by Dr. Max R\"ossler, the Walter Haefner Foundation, and the ETH Z\"urich Foundation. AP is supported by projects PID2022-137283NB-C21 of MICIU/AEI/10.13039/501100011033 / FEDER UE, the
Spanish-German project COMPOTE (AEI PCI2024-155081-2 \& DFG 541393733), as well as the Mar{\'i}a de Maeztu program CEX2020-001084-M.

\section{Preliminaries}\label{sec: preliminaries}
	
\subsection{Polytopes and graphs}\label{subsec:polytopes_graphs}	
A polytope $P$ is the convex hull of finitely many points in a Euclidean space, we say it is a $d$-polytope if the dimension of its affine hull is $d$. Its \defn{faces} are either $P$ or the intersection of $P$ with a hyperplane that does not cut its relative interior. Any face is itself a polytope, and a $k$-face of $P$ is a face of dimension~$k$. In particular, the \defn{vertices}, \defn{edges}, and \defn{facets} of $P$ are its $0$-, $1$-, and $(d-1)$-faces, respectively. The \defn{face lattice} of $P$ is the poset of faces ordered by inclusion. Two polytopes $P$ and $Q$ are \defn{combinatorially equivalent}, denoted $P\cong Q$, if they have isomorphic face lattices.
The \defn{(vertex-edge) graph} of $P$ is the graph $G(P)$ whose \defn{vertices} are the vertices of $P$, and a pair of vertices forms an \defn{edge} if they form an edge of $P$. We denote by $N_{G(P)}(v)$ the set of neighbors of a vertex $v$ in $G(P)$, namely the set of vertices $w$ such that $[v,w]$ is an edge of $G(P)$.

Let $v$ be a vertex of a $d$-polytope $P$ and $H$ a hyperplane that separates $v$ from the other vertices of $P$, i.e. $v$ lies in one open half-space bounded by $H$ and all other vertices lie in the opposite open half-space. The \defn{vertex figure} of $P$ at $v$, denoted by $\vf_P(v)$, is the $(d-1)$-polytope $P\cap H$. There is a bijection between the $k$-dimensional faces of $P$ that contain $v$ and the $(k-1)$-dimensional faces of the vertex figure of $P$ at $v$ \cite[Proposition 2.4]{Ziegler94:LectureOnPolytopes}.
For each vertex $v$ of $P$, we define the \defn{edge labeled vertex figure} of $P$ at $v$ as the set of subsets of edges of $G$ incident to $v$ that contains, for each facet $F$ of $P$ incident to $v$, the set of edges of $F$ containing $v$. We call the family of edge labeled vertex figures of $P$ at each vertex the \defn{edge labeled vertex figures} of $P$. These concepts were introduced by Joswig in~\cite{Joswig00:ReconstructNonSimplePolytope} to reconstruct polytopes from graphs.

\begin{theorem}[{\cite[Thm.~2.3]{Joswig00:ReconstructNonSimplePolytope}}]\label{thm:edgelabeledreconstruction}
    The combinatorial type of a polytope is uniquely determined by its graph together with its edge labeled vertex figures.
\end{theorem}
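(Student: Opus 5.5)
The plan is to reconstruct the face lattice inductively from the top down, using the observation that every face is determined by the facets containing it. Let $P$ be a $d$-polytope with graph $G$, and suppose that for each vertex $v$ we know the set $\mathcal L(v)$ of edge sets $\{E(F,v)\}$, where $F$ ranges over the facets containing $v$. First I would pass from these local sets to global facet vertex sets. For a facet $F$, the graph $G(F)$ is a connected subgraph, since the graph of any polytope is connected (Balinski). Moreover, $F$ has the following local closure property: if $v\in F$, then the edges of $G(F)$ at $v$ are exactly the set $E(F,v)$, which is a member of $\mathcal L(v)$. The key step is to show that the vertex set of $F$ is the unique connected component obtained by a propagation process. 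Starting from a vertex $v$ and a label $S\in\mathcal L(v)$, we include the neighbors joined by edges in $S$. At each such neighbor $w$, we must then choose the label in $\mathcal L(w)$ that corresponds to the same facet.

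To make that choice well defined, I would argue as follows. Fix an edge $[v,w]$ of $F$ and let $T=\{F' : [v,w]\subseteq F'\}$ be the set of facets containing this edge. Through $[v,w]$, the faces of $P$ correspond to faces of the face figure $P/[v,w]$, which is a $(d-2)$-polytope. The facets containing $[v,w]$ are distinguished at $v$ by their edge sets $E(F',v)$, and these sets are distinct because distinct facets through $v$ give distinct facets of $\vf_P(v)$. The issue is to identify $E(F,v)$ with $E(F,w)$. I expect this to be the main obstacle, and I would handle it through $2$-faces. Two edges $e,e'$ at $v$ lie in a common $2$-face, and that $2$-face is a cycle in $G$. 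The $2$-faces through $v$ can be read off from $\mathcal L(v)$ as the codimension-$(d-3)$ intersections, that is, as the edges of $\vf_P(v)$. These are obtained by intersecting labels: the vertex figure's face lattice is generated by its facets, given here as sets of its vertices, namely the edges at $v$. Hence $\mathcal L(v)$ determines the whole face lattice of $\vf_P(v)$, and in particular which pairs of edges at $v$ span a $2$-face.

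Each $2$-face $\Gamma$ containing $v$ can then be traced as a cycle: at each successive vertex $u$ of $\Gamma$, the next edge is the unique edge $e'$ such that $\{e,e'\}$ is a $2$-face pair at $u$ lying in every label at $u$ that contains the relevant edges. The precise rule I would use is to intersect all labels of $\mathcal L(u)$ containing the incoming edge and check consistency. Now say that a label $S'\in\mathcal L(w)$ \emph{matches} $S\in\mathcal L(v)$ if, for every $2$-face through $[v,w]$ whose other edge at $v$ lies in $S$, the corresponding other edge at $w$ lies in $S'$, and conversely. Since $F$ is the intersection of $P$ with its supporting hyperplane, a $2$-face through $[v,w]$ lies in $F$ if and only if its edges at $v$ are in $E(F,v)$, if and only if its edges at $w$ are in $E(F,w)$. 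Thus the true label matches. Uniqueness follows because distinct facets containing $[v,w]$ differ in some $2$-face through $[v,w]$: the face figure $P/[v,w]$ is a polytope, and its facets are distinguished by its vertices, which are exactly these $2$-faces.

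Propagating along the connected graph $G(F)$ from one matched label to the next recovers the vertex set of $F$ exactly. This also yields a well-defined equivalence relation on pairs $(v,S)$, whose classes are the facets. Finally, the face lattice is the lattice of all intersections of facet vertex sets, ordered by inclusion. This holds for any polytope, since every face is the intersection of the facets containing it, and the face lattice is determined by the vertex–facet incidences. Hence the graph together with the edge labeled vertex figures determines $P$ up to combinatorial equivalence.
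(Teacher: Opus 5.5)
Your local bookkeeping is sound. $\mathcal L(v)$ is the vertex--facet incidence structure of the vertex figure at $v$, so it determines that figure's whole face lattice. Moreover, \emph{if} the $2$-faces of $P$ were already known as cycles of $G$, your matching rule across an edge $[v,w]$ would pick the correct label, because the facets through $[v,w]$ are separated by the $2$-faces through $[v,w]$.

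The gap is that you never obtain the $2$-faces globally. Your definition of ``matches'' uses, for each $2$-face through $[v,w]$, ``the corresponding other edge at $w$''. That is, it needs a bijection between the $2$-face pairs $\{[v,w],e\}$ at $v$ and those at $w$, and this bijection is exactly what is unknown. The cycle-tracing rule meant to supply it is circular: to know which labels at $u$ are ``relevant'' you must already know which facets contain the $2$-face being traced.

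No purely local rule can close this gap. For a simple $d$-polytope every $(d-1)$-subset of the $d$ edges at a vertex is a label, so the edge labeled vertex figures carry no information beyond $G$, and the statement specializes to the Blind--Mani theorem. Concretely, on the $3$-cube, at a vertex $u$ entered along an edge $e$, both remaining edges form a $2$-face pair with $e$, and the labels containing $e$ intersect in $\{e\}$, so your rule selects nothing. More generally, in a simple $d$-polytope all $(d-1)!$ bijections between the $2$-face pairs at the two ends of an edge are locally indistinguishable.

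What is missing is a global ingredient, and this is what the cited proof provides: Joswig extends Kalai's acyclic-orientation argument.
\begin{itemize}
\item For an acyclic orientation $\mathcal O$ of $G$, let $h(\mathcal O)$ count the pairs $(v,\Phi)$ with $\Phi$ a face of the vertex figure at $v$ (the empty face included) whose vertices are all edges directed into $v$. By your own observation, $h(\mathcal O)$ is computable from $G$ and the labels.
\item Such pairs correspond bijectively to pairs (face $F$, sink of $G(F)$). Hence $h(\mathcal O)$ is at least the number of nonempty faces, with equality exactly when every face has a unique sink.
\item Generic linear functionals give such \emph{good} orientations, so good orientations are recognized from the data as the minimizers of $h$.
\item A vertex set $H$ is the vertex set of a $k$-face if and only if (i) it induces a connected subgraph whose edges at each of its vertices form the edge set of some $k$-face at that vertex (readable from the labels), and (ii) $H$ is an initial set of some good orientation.
\item For the ``if'' direction, let $F$ be the $k$-face spanned by the edges of $H$ at a sink $v$ of $H$. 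In a good orientation $v$ is the unique sink of $F$, so every vertex of $F$ has a monotone path in $F$ to $v$, and initiality gives $F\subseteq H$. Equality of the local edge sets at every vertex of $F$, together with connectivity of $H$, then gives $F=H$.
\end{itemize}
Without an argument of this global kind, your propagation step does not go through.
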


\subsection{Joins and Cartesian products}
The \defn{join} of two polytopes $P\subset \R^d$ and $Q\subset\R^e$ is 
\[
P \join Q = \conv \bigl((P\times\{0\}\times\{0\}) \cup (\{0\}\times Q\times\{1\})\bigr) \subset \mathbb{R}^{d+e+1}.
\] 
Its faces are the joins $F\join G$ where $F$ and $G$ are faces of $P$ and $Q$, respectively. Therefore, the graph of the join $P\join Q$ is the graph-theoretic join $G(P)\vee G(Q)$. More precisely, if $G=(V_G,E_G)$ and $H=(V_H,E_H)$ are graphs, then $G\vee H$ is the graph with vertices $V_G\sqcup V_H$ and edges $E_G\cup E_H\cup \set{[u,v]}{u\in V_G,\,v\in V_H}$.

The \defn{Cartesian product} of two polytopes $P\subset \R^d$ and $Q\subset\R^e$ is 
\[
P\times Q = \set{(x,y)}{x\in P,\ y\in Q}\subset \R^{d+e}. 
\]
The non-empty faces of $P\times Q$ are of the form $F\times G$, where $F$ and $G$ are non-empty faces of $P$ and $Q$, respectively. In particular, the graph $G(P\times Q)$ is the \defn{Cartesian product} $G(P)\square G(Q)$ of the graphs $G(P)$ and $G(Q)$. If $G=(V_G,E_G)$ and $H=(V_H,E_H)$ are graphs, then $G\square H$ is the graph with vertices $V_G\times V_H$ and where $(u_1,v_1)\sim (u_2,v_2)$ if and only if either $u_1=u_2$ and $[v_1,v_2]\in E_H$, or $v_1=v_2$ and $[u_1,u_2]\in E_G$.

\begin{figure}[ht]
    \centering
    \begin{tikzpicture}[scale=0.9]

\definecolor{join_color}{named}{Periwinkle}
\definecolor{product_color}{named}{PineGreen}

\begin{scope}[xshift=-3.2cm]

    \coordinate (A1) at (-1.5,-0.7);
    \coordinate (A2) at ( 1.5,-0.7);
    \coordinate (B1) at (-0.65,1.6);
    \coordinate (B2) at ( 0.65,2.1);

    \draw[join_color, dashed, very thick]
        (A1) -- (B2);
    \fill[white, fill opacity=0.5]
        (A1) -- (A2) -- (B2) -- (B1) -- cycle;
    

    \fill[join_color, fill opacity=0.4]
        (A1) -- (A2) -- (B1) -- cycle;

    \fill[join_color, fill opacity=0.4]
        (A2) -- (B1) -- (B2) -- cycle;

    \draw[join_color, very thick]
        (A1) -- (A2)
        (B1) -- (B2)
        (A1) -- (B1)
        (A2) -- (B1)
        (A2) -- (B2);
        
    \draw[YellowOrange, ultra thick]
        (A1) -- (A2);

    \draw[YellowOrange, ultra thick]
        (B1) -- (B2);

    \foreach \p in {A1,A2,B1,B2}{
        \fill[join_color] (\p) circle (2pt);
    }
\end{scope}

\begin{scope}[xshift=3.2cm]

    \filldraw[
        fill=product_color,
        fill opacity=0.3,
        draw=product_color,
        very thick
    ]
        (-1.5,-0.7) --
        ( 1.2,-0.7) --
        ( 1.2, 2.0) --
        (-1.5, 2.0) --
        cycle;

    \draw[YellowOrange, ultra thick]
        (-1.5,-0.7) -- ( 1.2,-0.7)
        (-1.5,-0.7) -- (-1.5, 2.0);

    \foreach \p in {
        (-1.5,-0.7),
        ( 1.2,-0.7),
        ( 1.2, 2.0),
        (-1.5, 2.0)
    }{
        \fill[product_color] \p circle (2pt);
    }
\end{scope}

\end{tikzpicture}
    \caption{Join and Cartesian product of two copies of a segment (highlighted in orange). The graph of the join is the complete graph $K_4$, whereas the graph of the Cartesian product is a square $K_2\square K_2$.}
    \label{fig:join_cartesian}
\end{figure}
Properties of Cartesian products in graphs were investigated in \cite{Sabidussi, Vizing,ImrichPeterin}. A graph is called \defn{prime} if it cannot be presented as the product of two nontrivial graphs, that is, as the product of two graphs with at least two vertices.
\begin{theorem}(\cite{Sabidussi, Vizing})
	\label{thm:cartesian_product}
	Every connected finite graph has a prime factor decomposition with respect to the Cartesian product that is unique up to the order and isomorphism of the factors.
\end{theorem}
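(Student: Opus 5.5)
This is the classical unique prime factorization theorem of Sabidussi and Vizing, so I would give the standard argument rather than anything specific to polytopes. The first part is \emph{existence}, by induction on the number of vertices. A connected graph $G$ is either prime, or $G\cong G_1\square G_2$ with both factors nontrivial. In the second case each $G_i$ has strictly fewer vertices than $G$, and each is connected, since it is isomorphic to a fibre of the product. Applying induction to $G_1$ and $G_2$ and concatenating the factorizations gives a prime factorization of $G$.

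The second part is \emph{uniqueness}. Here I would follow the edge-colouring approach, via the Djoković--Winkler relation or the simpler relation $\sigma$ used by Imrich and Peterin.
\begin{itemize}
\item Given a factorization $G\cong G_1\square\cdots\square G_k$, every edge changes exactly one coordinate. This partitions the edges into $k$ classes $E_1,\dots,E_k$.
\item Define a relation on edges: two edges are related if they are opposite edges of an induced (chordless) $4$-cycle, or if they share a vertex and lie in no common induced $4$-cycle. Take its transitive closure.
\item \textbf{Claim.} This transitive closure is intrinsic to $G$, and for \emph{any} factorization each class $E_i$ is a union of its equivalence classes.
\end{itemize}
The claim rests on two observations. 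Induced $4$-cycles in a Cartesian product either lie in one factor-layer or are "squares" using two different factors, with opposite edges in the same factor. Two adjacent edges from different factors always span such a square.

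Next I would show that the coarsest partition of $E(G)$ that is compatible with a product structure is unique, and that it is given exactly by the prime factorization. Concretely, suppose a partition of the edges into classes satisfies the square property. That is, any two adjacent edges from different classes lie in a unique induced square whose opposite edges are in matching classes. Then the map sending a vertex to its tuple of layers (the connected components of the graph with only class-$i$ edges) is an isomorphism of $G$ onto the product of the layers. Given two prime factorizations, take their common refinement of edge classes. One checks that it still satisfies the square property, so it induces a factorization refining both. By primality, refining a prime factor is impossible. Hence the two partitions coincide, and the factors agree up to order and isomorphism, because layers of the same class are isomorphic.

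The main obstacle is the refinement step. One must show that the intersection of two product-compatible edge partitions is again product-compatible, and this is where connectivity is genuinely used: without it the statement fails. I would handle it by verifying the unique-square property locally at each vertex, and then propagating along paths in the connected graph. In the paper itself I would simply cite \cite{Sabidussi, Vizing,ImrichPeterin} for these details.
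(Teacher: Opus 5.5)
The paper gives no proof of this statement: it is quoted from Sabidussi and Vizing, so citing it, as you propose in your last sentence, is exactly what the paper does. Judged as a sketch of the classical proof, your existence part is fine. (Connectivity of the factors comes from projecting paths of $G$ onto a factor, not from a fibre being isomorphic to it.) Your uniqueness part, however, rests on a false lemma: a partition with your local square property need not come from a product.

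Take the M\"obius ladder with vertices $0,1,\dots,2n-1$ (indices mod $2n$, $n\geq 4$), rails $\{i,i+1\}$ and rungs $\{i,i+n\}$. Its only $4$-cycles are the chordless squares $(i,i+1,i+1+n,i+n)$. So a rung and a rail sharing a vertex lie in exactly one induced square, and in it opposite edges have the same type. Yet the rails form a single layer, a $2n$-cycle, containing both endpoints of every rung. Hence your map sending a vertex to its tuple of layers is two-to-one. For $n=4$ this is the Wagner graph, which is prime: a cubic product on $8$ vertices must be the bipartite graph $K_2\square K_2\square K_2$, whereas $0,1,2,3,4$ is a $5$-cycle. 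The same graph also shows that the transitive closure of your edge relation can be strictly finer than the prime-factor partition; here it has the two classes ``rungs'' and ``rails''. So your plan of verifying the unique-square property locally and propagating along paths cannot close the refinement step. Purely local information is exactly what this example defeats, and connectivity does not help.

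What is missing is a global, distance-type input, and the standard one is convexity. Every layer of $H_1\square\cdots\square H_\ell$ is convex. A convex subgraph $U$ of such a product is itself a product $U_1\square\cdots\square U_\ell$ of subgraphs of the factors, because if $(z_1,x_2)$ and $(y_1,z_2)$ lie in $U$, then $(z_1,z_2)$ lies on a geodesic between them. Moreover, the edges of $U$ in the $H_j$-direction are exactly its $U_j$-edges.

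Apply this to an $E_i$-layer of the first factorization. That layer is convex and isomorphic to $G_i$, so the classes $E_i\cap F_j$ induce a genuine product decomposition of $G_i$. Primality then forces the layer to use a single class $F_j$. All $E_i$-layers use the same $j$, since corresponding edges of parallel $E_i$-layers are opposite in product squares. Hence $E_i\subseteq F_j$, by symmetry the two partitions coincide, and no general ``square property $\Rightarrow$ product'' lemma is needed.

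Note also a slip in your wording: the prime factorization corresponds to the \emph{finest} product-compatible partition, not the coarsest. The coarsest is the one-class partition coming from the trivial factorization.
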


\subsection{Slices}
We define a \defn{slice} of a $d$-dimensional polytope $R$ to be any $(d - 1)$-dimensional polytope $P$ which can be obtained as the intersection of $R$ with an affine hyperplane~$H = \set{x\in \R^d }{ u_1 x_1 + \ldots + u_d x_d = t}$. We will use this notation throughout, and reserve $u = (u_1,\dots,u_d)$ to denote the normal vector of the intersecting hyperplane $H$.

\begin{lemma}\label{lem:slicefacedim}
	Let $R\subset \R^d$ be a polytope and $P = R\cap H$ a slice of $R$. Any $k$-face of $P$ is either a $k$-face of $R$ contained in $H$ or the intersection of $H$ with a $(k+1)$-face of $R$ (through its relative interior).
\end{lemma}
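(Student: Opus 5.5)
Every face of $P$ has the form $F' \cap H$ with $F'$ a face of $R$. The plan is to identify, for a given $k$-face $F$ of $P$, the smallest face of $R$ that contains $F$, and then count dimensions.

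First I will show that every face of $P$ is the intersection of $H$ with a face of $R$. Let $F$ be a face of $P$. If $F=P$, take $F'=R$. Otherwise there is a linear functional $\ell$ and a constant $c$ with $\ell\le c$ on $P$ and $F=\{x\in P : \ell(x)=c\}$.
\begin{itemize}
\item Let $F'$ be the smallest face of $R$ containing $F$. Equivalently, $F'$ is the unique face of $R$ whose relative interior contains a relative interior point $p$ of $F$.
\item Then $F'\cap H$ is a face of $P$, since it is cut out of $P$ by the same supporting hyperplane that cuts $F'$ out of $R$.
\item The face $F'\cap H$ contains $F$, and $p$ lies in its relative interior: $p\in\operatorname{relint}F'\cap H$, and if $\operatorname{relint}F'$ meets $H$ then $\operatorname{relint}(F'\cap H)=\operatorname{relint}F'\cap H$.
\item Two faces of $P$ that share a relative interior point coincide. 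Hence $F=F'\cap H$.
\end{itemize}

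Next comes the dichotomy. Either $F'\subseteq H$, or $H$ meets the relative interior of $F'$ without containing $F'$.
\begin{itemize}
\item If $F'\subseteq H$, then $F=F'$, so $F$ is a $k$-face of $R$ lying in $H$.
\item Otherwise $H$ meets $\operatorname{relint}F'$ at $p$ but does not contain $\operatorname{aff}F'$. So $H\cap\operatorname{aff}F'$ is a hyperplane in $\operatorname{aff}F'$, and it passes through a relative interior point of $F'$.
\item In that case $\dim(F'\cap H)=\dim F'-1$. This holds because a hyperplane through a relative interior point of a convex body cuts it in full codimension one: a small ball around $p$ inside $\operatorname{aff}F'$ is contained in $F'$, and its intersection with $H$ is $(\dim F'-1)$-dimensional.
\item Therefore $\dim F'=k+1$, and $F$ is the intersection of $H$ with a $(k+1)$-face through its relative interior.
\end{itemize}

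The only delicate step is the minimal-face argument, specifically $\operatorname{relint}(F'\cap H)=\operatorname{relint}F'\cap H$ when the latter is nonempty. This is the standard fact that the relative interior of an intersection of convex sets equals the intersection of their relative interiors, provided the relative interiors meet. I would invoke it for $F'$ and the affine space $H$, whose relative interior is $H$ itself.
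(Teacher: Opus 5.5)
Your proposal is correct and is essentially the paper's argument: your smallest face $F'$ of $R$ containing $F$ is exactly the paper's face $G$ of minimal dimension with $F=G\cap H$, and the case split on whether it lies in $H$ is the same. Your relative-interior description of $F'$ makes explicit the step the paper only asserts, that minimality forces $H$ to meet the relative interior. You also prove the representation $F=F'\cap H$, which the paper takes for granted. One clean-up: drop the unused opening with $\ell$ and ``take $F'=R$ if $F=P$'', since when $P$ is a facet of $R$ that choice would break the dichotomy; take $F'$ to be the smallest face containing $F$ throughout, as your itemized argument already does.
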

\begin{proof}
	The faces of $P$ are the non-empty intersections $G \cap H$ where $G$ is a face of $R$. 
    So let $F$ be a $k$-face of $P$ for $k \geq 0$ and suppose $F = G \cap H$ where $G$ is a face of $R$ and further assume that $\dim(G)$ is minimal among all such faces. 
    Then, there are two cases. 
    If $G \subseteq H$, then $G$ itself is a face of $P$ and $\dim(G \cap H) = \dim(G) = k$.
    If not, then since $\dim(G)$ is minimal it must be that $H$ intersects the relative interior of $G$. 
    Then $\dim(G \cap H) = \dim(G) - 1 = k$, so $\dim(G) = k + 1$.
    Thus, a $k$-face of $P$ must either be a $k$-face of $R$ contained in $H$ or the intersection of $H$ with a $(k+1)$-face of $R$. 
\end{proof}

The \defn{$d$-simplex} is the convex hull of $d+1$ affinely independent points. In its standard form it is
\[
\simp = \conv (e_1,\dots, e_{d+1}),
\]
where $e_i$ is the $i$-th standard basis vector. Its vertex-edge graph is the \defn{complete graph} on $d+1$ vertices $K_{d+1}$.

A slice of a $d$-simplex is a $(d-1)$-polytope with at most $d+1$ facets, by construction. The polars of such polytopes where characterized by Gr\"unbaum.
\begin{theorem}[{\cite[Sec.~6.1]{Grunbaum03:ConvexPolytopes}}]\label{thm:Grunbaum}
    A $d$-polytope with at most $d+2$ facets is combinatorially equivalent to 
    \[
		\simp[p] \join \bigl(\simp[{q}]\times \simp[{r}]\bigr),
		\]
    for some $p\geq -1$ and $q,r\geq 0$ satisfying $p+q+r+1=d$, where $\Delta_{-1} = \emptyset$.
\end{theorem}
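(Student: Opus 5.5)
Since \cref{thm:Grunbaum} is a statement about the face lattice, I will prove the polar statement and dualize at the end: every $d$-polytope $P^\ast$ with at most $d+2$ vertices is combinatorially equivalent to an iterated pyramid over a direct sum of two simplices. Under polarity, pyramids (joins with a simplex) correspond to joins with a simplex, and direct sums $\simp[q]\oplus\simp[r]$ correspond to products $\simp[q]\times\simp[r]$. This is the formula claimed, with the count $p+1$ of apexes and $q+r$ from the sum giving $p+q+r+1=d$.

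The case of $d+1$ vertices is the simplex. In the formula it is obtained with $p=-1$, $r=0$, $q=d$, since $\simp[d]\times\simp[0]\cong\simp[d]$, so the formula covers it.

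For $d+2$ vertices $v_1,\dots,v_{d+2}$ I will use the Gale transform. The space of affine dependences is one-dimensional, spanned by some $\lambda\in\R^{d+2}$ with $\sum\lambda_i v_i=0$ and $\sum\lambda_i=0$. So the Gale diagram is a list of $d+2$ real numbers $\lambda_i$ summing to zero, not all zero. I will use the standard coface criterion: a subset $S$ is the vertex set of a face iff the Gale vectors indexed by the complement contain $0$ in the relative interior of their convex hull. In this one-dimensional situation the criterion is concrete. Write $Z=\{i:\lambda_i=0\}$, $A=\{i:\lambda_i>0\}$ and $B=\{i:\lambda_i<0\}$. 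Then the complement of $S$ has this property iff it is contained in $Z$, or it meets both $A$ and $B$.

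From this I read off the structure in three steps.
\begin{itemize}
\item Every $v_i$ with $i\in Z$ lies in every facet except those whose complement is $\{i\}$. Hence $P^\ast$ is an iterated pyramid with apex set $\{v_i\}_{i\in Z}$ over the polytope formed by the remaining vertices.
\item On $A\cup B$, the faces are exactly the sets missing at least one element of $A$ and at least one of $B$. This is precisely the face lattice of $\conv\{v_i\}_{i\in A}\oplus\conv\{v_i\}_{i\in B}$, a direct sum of simplices $\simp[q]\oplus\simp[r]$ with $q=|A|-1$ and $r=|B|-1$.
\item Both $A$ and $B$ are nonempty because $\sum\lambda_i=0$ and $\lambda\neq 0$, so $q,r\ge 0$. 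Setting $p=|Z|-1$, we get $|Z|+|A|+|B|=d+2$, hence $p+q+r+1=d$.
\end{itemize}

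The only real content is the coface criterion for Gale diagrams, together with the bookkeeping that identifies the resulting face lattices with those of pyramids and direct sums. The main obstacle is to justify the criterion cleanly. If I did not want to cite it, I would prove it directly: $\conv(v_S)$ is a face iff there is no affine dependence with positive coefficients supported on the complement that is balanced against $S$. Since all dependences are multiples of $\lambda$, this reduces exactly to the sign condition above. The dualization step is routine from the face descriptions of joins and products given earlier.
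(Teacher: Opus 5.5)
Your proof is correct. Polarity reduces the statement to $d$-polytopes with at most $d+2$ vertices, and your sign analysis of the one-dimensional Gale diagram is the standard argument for Gr\"unbaum's result, which the paper only cites without reproducing a proof; one detail worth adding is that, because each $v_i$ is a vertex, applying your criterion to $S=\{i\}$ forces $|A|,|B|\ge 2$, so $\Delta_q\oplus\Delta_r$ is nondegenerate and all $|A|+|B|$ points are really its vertices.
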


\begin{corollary}\label{cor:simplexslices}
    Slices of the $d$-simplex are labeled class reconstructible.
\end{corollary}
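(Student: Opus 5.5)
A slice $P=\simp\cap H$ of the $d$-simplex is a $(d-1)$-polytope. Its facets are intersections of $H$ with facets of $\simp$, so it has at most $d+1=(d-1)+2$ facets. By \cref{thm:Grunbaum}, $P$ is combinatorially equivalent to $\simp[p]\join(\simp[q]\times\simp[r])$ with $p\ge -1$, $q,r\ge 0$ and $p+q+r+1=d-1$. (If $P$ has only $d$ facets it is a simplex, which is the case $q=0$ or $r=0$.) So the plan is to show two things: the graph determines the triple $(p,q,r)$ up to swapping $q$ and $r$, and the face lattice can be read off the labeled graph. Since the combinatorial type of the join-product is explicit, both reduce to graph statements.

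The graph of $\simp[p]\join(\simp[q]\times\simp[r])$ is $K_{p+1}\vee(K_{q+1}\square K_{r+1})$. I would recover $p$ as the number of universal vertices (vertices adjacent to all others), after treating the degenerate cases separately. If $q=0$ or $r=0$, then $P$ is a simplex and the graph is complete. In that case every polytope of this family with that graph is a simplex on that vertex set, so the face lattice is the full Boolean lattice. For this I use that the number of vertices equals $\dim+1$, and that every subset is a face.

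If $q,r\ge1$, the rook graph $K_{q+1}\square K_{r+1}$ has no universal vertex, since $(a,b)$ is not adjacent to $(a',b')$ with $a\ne a'$, $b\ne b'$. Hence the universal vertices of the join are exactly the $p+1$ apex vertices. Deleting them leaves $K_{q+1}\square K_{r+1}$. By \cref{thm:cartesian_product} (complete graphs are prime), this graph determines $\{q+1,r+1\}$. For the labeled statement I want more: the partition of the vertices into "rows" and "columns" is intrinsic to the graph. The maximal cliques of a rook graph with $q,r\ge1$ are precisely the rows and columns. Two cliques are of the same type iff they are disjoint, except in the degenerate case $q=r=1$, where all cliques are edges of a $4$-cycle and the disjoint pairs still give the correct two classes. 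So any graph isomorphism maps apex vertices to apex vertices and preserves the row/column structure, possibly swapping the two classes.

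Finally, the faces of $\simp[p]\join(\simp[q]\times\simp[r])$ are exactly the vertex sets $A\cup(B\times C)$ with $A\subseteq$ apex set arbitrary, and either $B\times C=\emptyset$ or $B,C$ nonempty. (One also needs to exclude the empty-product subtleties.) This family is described purely in terms of the apex set and the row/column partition. Any bijection inducing a graph isomorphism preserves both up to swapping factors, and the face description is symmetric under that swap. Hence it maps faces to faces, which gives labeled class reconstructibility.

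The main obstacle is the correct handling of the small cases. These are the simplex case and $q=r=1$, where $K_2\square K_2$ is a $4$-cycle and its clique structure is less rigid. I would verify these by hand, checking that the face description is still invariant under all graph automorphisms of $C_4$. That holds, because its automorphism group, the dihedral group of order $8$, preserves the pair of antipodal pairs.
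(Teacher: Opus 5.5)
Your proposal is correct and follows the same route as the paper: Grünbaum's classification, the apex vertices identified as the universal vertices, and the rook graph $K_{q+1}\square K_{r+1}$ determining $q$ and $r$. Your maximal-clique description of rows and columns usefully makes precise the paper's one-line claim that the graph determines the faces on the labeled vertex set, since \cref{thm:cartesian_product} by itself only gives uniqueness of the factors up to isomorphism.
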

\begin{proof}
The graph of $\simp[p] \join \bigl(\simp[{q}]\times \simp[{r}]\bigr)$ is $(K_{q+1}\square K_{r+1})\vee K_{p+1}$. This decomposition is unique (the vertices of $K_{p+1}$ are those adjacent to all other vertices, and the decomposition $(K_{q+1}\square K_{r+1})$ is unique by \cref{thm:cartesian_product}). This completely determines $p,q,r$, and thus the combinatorial type of the slice. The graph also determines the facets as subsets of its given vertex set, proving labeled class reconstructibility.
\end{proof}

The preceding classification also determines the possible local combinatorial structure, in terms of vertex figure, of slices of simple polytopes.
\begin{lemma}\label{lemma:vertex_figure}
    Let $R\subset \R^d$ be a simple polytope and $P = R\cap H$ a slice. Then, at every vertex $v$ of $P$, there exist $p\geq -1$, $q,r \geq 0$ satisfying $p+q+r+1=d-2$ such that
    \[
    \vf_P(v) \cong \simp[p] \join (\simp[q] \times\simp[r]).
    \]
\end{lemma}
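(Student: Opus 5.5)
We prove the lemma by computing the vertex figure of $P$ at $v$ as a slice of a vertex figure of $R$, and then applying \cref{thm:Grunbaum}. By \cref{lem:slicefacedim}, a vertex $v$ of $P$ is either a vertex of $R$ lying in $H$, or the intersection of $H$ with the relative interior of an edge $e$ of $R$. We treat the two cases separately. In each case we identify $\vf_P(v)$ with a slice of a simplex of the appropriate dimension. Since $P$ has dimension $d-1$, the target $\vf_P(v)$ is a $(d-2)$-polytope.

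\emph{Case 1: $v=H\cap e$ with $e$ an edge of $R$ not contained in $H$.} Since $R$ is simple, the faces of $R$ containing $e$ correspond to the faces of the vertex figure of $R$ at an endpoint of $e$ that contain the vertex corresponding to $e$. The link of that vertex in a $(d-1)$-simplex is a $(d-2)$-simplex. More concretely, the $k$-faces of $R$ containing $e$ are in bijection with the $(k-2)$-faces of a $(d-2)$-simplex. The faces of $P$ containing $v$ are exactly the sets $G\cap H$ with $G$ a face of $R$ containing $e$, and $\dim(G\cap H)=\dim G-1$ because $H$ meets the relative interior of $e$. Hence $\vf_P(v)$ is combinatorially a $(d-2)$-simplex. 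This is the case $p=d-2$, $q=r=0$, interpreting $\simp[0]\times\simp[0]$ as a point; if the convention forces $q,r\ge0$ with a nonempty product factor, one checks that $\simp[d-3]\join\simp[0]$ also gives a simplex.

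\emph{Case 2: $v$ is a vertex of $R$ lying in $H$.} Choose a hyperplane $H'$ close to $v$ that separates $v$ from the other vertices of $R$. Then $\vf_R(v)=R\cap H'$ is a $(d-1)$-simplex, since $R$ is simple. Moreover, $\vf_P(v)=P\cap H'=(R\cap H')\cap H=\vf_R(v)\cap H$. We must check that this is a genuine slice, i.e. that $H\cap H'$ meets the interior of $\vf_R(v)$ and has dimension $d-2$. This holds because $\dim P=d-1$: the hyperplane $H$ does not support $R$ at $v$, since otherwise $P$ would be a proper face of $R$ of dimension $d-1$ lying in $H$. That situation is still allowed, but then $P$ is a facet and $\vf_P(v)$ is a facet of the simplex $\vf_R(v)$, hence a simplex; this case must be handled separately. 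Otherwise $H$ passes through the interior of the cone at $v$, and $\vf_R(v)\cap H$ is a $(d-2)$-dimensional slice of a $(d-1)$-simplex.

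A slice of a $(d-1)$-simplex has at most $d$ facets, namely one for each facet of the simplex that it meets. Since it has dimension $d-2$, \cref{thm:Grunbaum} applies with $d-2$ in place of $d$. It yields $\vf_P(v)\cong\simp[p]\join(\simp[q]\times\simp[r])$ with $p+q+r+1=d-2$. The main subtlety is the bookkeeping in Case 2: making precise that intersecting vertex figures commutes with slicing, and handling the degenerate situation where $H$ contains a facet of $R$ through $v$. I expect both points to be routine once the local cone picture at $v$ is set up.
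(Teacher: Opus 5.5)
Your proof is correct and is essentially the paper's argument. Both show that $\vf_P(v)$ is a $(d-2)$-polytope with at most $d$ facets and then invoke \cref{thm:Grunbaum}; you get the facet bound by realizing $\vf_P(v)$ as a slice of the simplex $\vf_R(v)$, while the paper counts the facets of $R$ through $v$, or through the edge containing $v$, using \cref{lem:slicefacedim}. One small slip: a $(d-2)$-simplex corresponds to $p=d-3$, $q=r=0$ (your fallback $\simp[d-3]\join\simp[0]$), not $p=d-2$; and your Case 1 could be absorbed into the same facet count, since only $d-1$ facets of $R$ contain an edge.
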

\begin{proof}
    The vertex figure $\vf_P(v)$ has dimension $d-2$.
    Since $R$ is simple, exactly $d$ facets of $R$ are incident to each vertex of $R$, and exactly $d-1$ facets meet in an edge of $R$. Consequently, by \cref{lem:slicefacedim}, at most $d$ facets of $P$ are incident to $v$, and therefore $\vf_P(v)$ has at most $d$ facets.
    Thus $\vf_P(v)$ is a $(d-2)$-polytope with at most $(d-2)+2$ facets. The claim now follows from \cref{thm:Grunbaum}.
\end{proof}

\section{The structure of cube slices}\label{sec: structure of cube slices}

The \defn{standard $d$-cube} is the polytope \defn{$\cube$} obtained as the convex hull of $\{0,1\}^d$, the $d$-dimensional vectors with coordinates $0$ and $1$. It is the $d$-fold Cartesian product of the segment $[0,1] \subset \R$. 
We denote by \defn{$\cubegraph$} the vertex-edge graph of $\cube$. Note that $\cubegraph$ is a Cartesian product of $d$ edges $K_2 \square \cdots \square K_2$.
We denote by $\supp(v) = \{i \in [d] \mid v_i \neq 0\}$ the \defn{support} of a vector $v$. For a set $A \subset [d]$ we denote $e_A = \sum_{i \in A} e_i$. Occasionally, we write $e_{ij}$ as a shorthand-notation for~$e_{\{i,j\}}$, $e_{ijk}$ for~$e_{\{i,j,k\}}$, and so on. We will use this notation throughout the paper. For a hyperplane $H = u^\perp$ we say that a coordinate $i \in [d]$ is \defn{free} in $H$ if $u_i = 0$, i.e., if $x \in H$ implies $x + \lambda e_i \in H$ for all $\lambda \in \R$. Similarly, for a polytope $F$ we say that a coordinate is \defn{free} if $F$ lies in a hyperplane where the coordinate is free.

We devote the rest of the section to characterize features of \emph{slices} of cubes. 	
Throughout the paper, we will often use the highly symmetric structure of the cube to simplify arguments. 
Without loss of generality, we can make one of the following assumptions on any slice $P=\cube\cap H$ of $\cube$ :
\begin{enumerate}[label=(A\arabic*)]
    \item  \label{assumption:non_negative} The hyperplane $H$ has a non-negative normal vector $u$ so that
    \begin{equation*}
    H = \{x\in \R^d \mid u_1 x_1 + \ldots + u_d x_d = t\}, \qquad u_i\geq 0 \; \forall i \in[d].
\end{equation*}
    \item \label{assumption:through_origin} If the slice $P$ contains a vertex $v$ of $\cube$, then $v=\zero \in \verts(P)\cap \verts(\cube)$ and 
\begin{equation*}
    \qquad H = u^\perp = \{x\in \R^d \mid u_1 x_1 + \ldots + u_d x_d = 0\}.
\end{equation*}
\end{enumerate}
Both assumptions are separately valid and we will make use of them when convenient. 

\subsection{Simple and non-simple slices}
We start our study of slices of cubes focusing on the possible degrees of its vertices, to distinguish simple and non-simple slices.

\begin{proposition} \label{prop:deg_vertices}
    Let $P=H\cap \cube$ be a slice of $\cube$ by the hyperplane $H$. If $v$ is a vertex of both $P$ and $\cube$, then the degree of $v$ within $P$ is $c + ab$. Further, of the $d$ edges of $\cube$ incident to $v$, there are $c$ which are contained in $H$ while $a$ lie on one side of $H$ (above) and $b=d-a-c$ lie on the other side (below). 
\end{proposition}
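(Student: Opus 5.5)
Using \ref{assumption:through_origin}, we may take $v=\zero$ and $H=u^\perp$. The $d$ edges of $\cube$ at $\zero$ are $[\zero,e_i]$. Split $[d]$ into $C=\{i: u_i=0\}$, $A=\{i:u_i>0\}$ and $B=\{i:u_i<0\}$, with sizes $c$, $a$, $b=d-a-c$. The edges $[\zero,e_i]$ with $i\in C$ lie in $H$; those with $i\in A$ lie above $H$ and those with $i\in B$ below. This is the second part of the statement. It remains to count the edges of $P$ at $\zero$.

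By \cref{lem:slicefacedim}, an edge of $P$ containing $\zero$ is either an edge of $\cube$ lying in $H$, or $H\cap F$ for a $2$-face $F$ of $\cube$ that $H$ meets in its relative interior. The first kind gives exactly the $c$ edges $[\zero,e_i]$ with $i\in C$. For the second kind, $F$ contains $\zero$, so $F=\conv\{\zero,e_i,e_j,e_{ij}\}$ for some $i\neq j$. Restricted to the plane of $F$, $H$ is the line $u_ix_i+u_jx_j=0$ through the corner $\zero$. It enters the relative interior of the square exactly when $u_iu_j<0$, that is, when one index is in $A$ and the other in $B$. In that case $H\cap F$ is a segment from $\zero$ to a point on the edge $[e_i,e_{ij}]$ or $[e_j,e_{ij}]$.

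In every other case $H\cap F$ is a face of $F$ or just $\{\zero\}$:
\begin{itemize}
\item if both $u_i,u_j\neq0$ with the same sign, then $H\cap F=\{\zero\}$;
\item if exactly one of $u_i,u_j$ is $0$, then $H\cap F$ is an edge of $\cube$, already counted;
\item if $u_i=u_j=0$, then $F\subset H$, which is a $2$-face and not an edge.
\end{itemize}
Distinct $2$-faces give distinct edges of $P$, since each such edge spans a $2$-dimensional space together with its face. Hence there are exactly $ab$ edges of the second kind, one for each pair in $A\times B$.

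The only point needing care is that each $H\cap F$ with $i\in A$, $j\in B$ really is an edge of $P$. This holds because $F$ is a face of $\cube$, so $F\cap H$ is a face of $P$, and it is $1$-dimensional. The degree of $\zero$ in $G(P)$ is therefore $c+ab$.
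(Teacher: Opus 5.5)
Your proof is correct and follows essentially the same route as the paper. You place $v$ at $\zero$, sort the indices by the sign of $u_i$, and use \cref{lem:slicefacedim} to split the edges at $\zero$ into the $c$ cube edges lying in $H$ and the $ab$ sections of the squares $F_{i,j}$ with $u_iu_j<0$. The differences are minor: you treat a supporting hyperplane uniformly rather than as a separate case, and you make explicit why each such section is a face of $P$ and why distinct squares give distinct edges (most cleanly, the square is the minimal cube face containing the edge).
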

\begin{proof}
    Without loss of generality, we may assume that \ref{assumption:through_origin} holds, i.e., that $H = u^\perp$ and $v=\zero$. If $H$ does not intersect the interior of $\cube$, then $P$ is a face of the cube and $H$ is a supporting hyperplane. In this case, $c = \dim P = \deg_P(v)$ and either $a$ or $b$ is zero, so the claim follows. 
    
    Otherwise, $H$ intersects the interior of the cube and, as a consequence, there must exist $i,j \in [d]$ such that $u_i u_j <0$.
    The edges of $\cube$ incident to $\zero$ are $[\zero, e_i]$ for $i\in [d]$. Then, as $\langle u, e_i\rangle = u_i$, the edge $[\zero, e_i]$ is contained in $P$ if and only if $u_i=0$; it lies above $P$ when $u_i>0$ and below $P$ when $u_i<0$. We denote the numbers of such edges by $c$, $a$, and $b=d-c-a$ respectively. See \cref{fig:cartoon_abc} for a cartoon.
    \begin{figure}[ht]
        \centering
        \begin{tikzpicture}[scale=0.6]

\definecolor{join_color}{named}{Periwinkle}
\definecolor{product_color}{named}{PineGreen}

\coordinate (O) at (0,0);

\coordinate (J1) at (2.1,-1.5);
\coordinate (Jb) at (4.4,-1);

\draw[very thick] (O) -- (J1);
\draw[very thick] (O) -- (Jb);

\node[below]       at (J1) {$e_{j_1}$};
\node[below right] at (Jb) {$e_{j_b}$};
\node at (3.3,-1.4) {$\cdots$};

\filldraw[
    fill=white,
    fill opacity=0.7,
    draw=product_color,
    very thick
]
    (-1.4,-1.25) --
    (-1.1,0.95) --
    (4.55,2.75) --
    (4.25,-0.15) --
    cycle;
\filldraw[
    fill=product_color,
    fill opacity=0.3,
    draw=product_color,
    very thick
]
    (-1.4,-1.25) --
    (-1.1,0.95) --
    (4.55,2.75) --
    (4.25,-0.15) --
    cycle;

\coordinate (I1) at (-0.25,2.8);
\coordinate (Ia) at (1.45,3);

\draw[very thick] (O) -- (I1);
\draw[very thick] (O) -- (Ia);

\node[above left]  at (I1) {$e_{i_1}$};
\node[above right] at (Ia) {$e_{i_a}$};
\node at (0.65,3.2) {$\cdots$};

\coordinate (K1) at (2.9,0.4);
\coordinate (Kc) at (3.1,1.85);

\draw[product_color!80!black, very thick] (O) -- (K1);
\draw[product_color!80!black, very thick] (O) -- (Kc);

\node[right] at (K1) {{\color{product_color!70!black} $e_{k_1}$}};
\node[right] at (Kc) {{\color{product_color!70!black} $e_{k_c}$}};
\node[right] at (3.1,1.3) {{\color{product_color!70!black} $\vdots$}};

\fill (O) circle (3pt);

\node at (4.9,2.9) {\textcolor{product_color!80!black}{$H$}};
\node at (-0.3,0) {$\zero$};

\end{tikzpicture}
        \caption{Cartoon of the local picture around a vertex (the origin $\zero$) of both $P$ and $\cube[d]$.}
        \label{fig:cartoon_abc}
    \end{figure}
    By definition, the degree of the vertex $\zero\in P$ is the number of edges of $P$ incident to $\zero$. By \cref{lem:slicefacedim}, there are two types of edges $E$ of $P$:
    \begin{enumerate}
        \item $E$ is an edge of $\cube$ itself, namely $E\subset H$;
        \item $E = F \cap H$ where $F$ is a $2$-dimensional face of $\cube$ with $F \nsubseteq H$.
    \end{enumerate} 
    There are $c$ edges of $P$ of the first type that are incident to $\zero$.
    In the second case, $F$ is a $2$-dimensional face of $\cube$ incident to $\zero$, and thus necessarily defined as
    \[
    F = F_{i,j} = \set{x \in \R^d}{x_i, x_j \in [0,1], x_k = 0 \text{ for all } k \notin \{i,j\}}
    \]
    for some $\{i,j\}\subset [d]$. Hence, $F_{i,j}$ has vertices $\zero, e_i, e_j, e_i+e_j$, and it gives rise to an edge of $P$ only when 
    \begin{equation*}
        u_i x_i + u_j x_j = 0
    \end{equation*}
    has a solution, namely when $u_i u_j<0$.
    Since $|\{i \in [d] \mid u_i > 0 \}| = a$ and $|\{i \in [d] \mid u_i < 0 \}| = b$, there are $ab$ pairs of indices $\{i,j\}$ giving rise to faces $F_{i,j}$ that intersect $H$, and hence to edges of $P$ of the second type.
    Taking the two classes of edges together, the claim follows.
\end{proof}

As a consequence, one can classify how simple vertices of a slice can arise. This is relevant since for reconstruction purposes we only need to understand the structure around non-simple vertices.
\begin{corollary}
    \label{cor:simple_vertices}
    Let $H$ be a hyperplane that intersects the interior of $\cube$. Then, a vertex $v$ of $P = \cube\cap H$ is simple if and only if one of the following holds:
    \begin{enumerate}
        \item[(i)] $v$ is the intersection of $H$ with the relative interior of an edge of $\cube$, or
        \item[(ii)] $v$ is a vertex of $\cube$ and exactly one edge of $\cube$ incident to $v$ lies on one side of $H$.
    \end{enumerate}
\end{corollary}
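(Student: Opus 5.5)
Since $H$ meets the interior of $\cube$, we have $P=\cube\cap H$ of dimension $d-1$, so a vertex $v$ of $P$ is simple exactly when $\deg_P(v)=d-1$. By \cref{lem:slicefacedim}, every vertex of $P$ is either a vertex of $\cube$ lying in $H$, or the intersection of $H$ with the relative interior of an edge of $\cube$. We treat these two cases separately.

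\emph{Case (i): $v$ lies in the relative interior of an edge $e=[w,w']$ of $\cube$.} We check that $v$ is always simple, so (i) gives simple vertices. After translating or reflecting coordinates, take $e$ parallel to $e_1$. The edges of $P$ through $v$ come from \cref{lem:slicefacedim}: each is $F\cap H$ for a $2$-face $F$ of $\cube$ containing $e$, since no edge of $\cube$ through $v$ can lie in $H$. There are exactly $d-1$ such $2$-faces, namely those spanned by $e_1$ and $e_j$ with $j\neq 1$. Each of them is met by $H$ through its relative interior, because $H$ crosses $e$ transversally at an interior point. Hence each gives a segment of $P$ through $v$. We still have to confirm that each such segment is a genuine edge of $P$, and that the $d-1$ of them are distinct. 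They are distinct because the faces $F$ are distinct and their intersections with $H$ are not contained in $e$. Each is an edge because faces of $P$ are exactly the intersections $G\cap H$, and $F\cap H$ is one-dimensional. So $\deg_P(v)=d-1$. Alternatively, one can use the vertex figure: the vertex figure of $\cube$ at an interior point of an edge is locally a $(d-2)$-simplex times a segment, and slicing transversally gives a simplex.

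\emph{Case (ii): $v$ is a vertex of $\cube$.} By \cref{prop:deg_vertices}, $\deg_P(v)=c+ab$ with $a+b+c=d$. Since $H$ meets the interior, the proof of that proposition gives $a,b\ge 1$. We therefore need to show that $c+ab=d-1=a+b+c-1$ holds if and only if $\min(a,b)=1$. The equation rearranges to $(a-1)(b-1)=0$, which is exactly the condition that $a=1$ or $b=1$. This is the statement that exactly one edge of $\cube$ at $v$ lies on one side of $H$.

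Combining the two cases gives the equivalence. The only delicate point is Case (i): making sure that every $2$-face containing $e$ really contributes a distinct edge of $P$, and that no other face contributes one. The count from \cref{lem:slicefacedim} handles this directly, so I expect no real obstacle.
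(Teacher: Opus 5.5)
Your proof is correct and follows the paper's route. You use the same split into points in relative interiors of cube edges versus cube vertices, and the same degree equation $c+ab=d-1$; your factorization $(a-1)(b-1)=0$ is the paper's quadratic $a^2-(d-c)a+(d-c-1)=0$, whose roots are $a=1$ and $a=d-c-1$ (i.e.\ $b=1$). The only difference is that you prove case (i) directly, by counting the $d-1$ square faces containing the edge, whereas the paper simply cites \cite[Proposition~4.2]{BraMer25:6Cube} for it.
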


\begin{proof}
    If $v$ arises as the intersection of $H$ with the relative interior of an edge of $\cube$, then \cite[Proposition 4.2]{BraMer25:6Cube} shows that $v$ is simple. 
    Otherwise, $v$ is a vertex of $\cube$. 
    Using the same notation as in the proof of \cref{prop:deg_vertices}, the vertex $v$ is simple in $P$ if and only if $c + a(d-c-a) = d-1$.
    Equivalently,
    \[
    a^2 - (d-c)a + (d-c-1) = 0,
    \]
    which has solutions $a = 1$ and $a = d-c-1$.
    In other words, $v$ is simple in $P$ if and only if there is exactly one edge of $\cube$ incident to $v$ on one side of $H$, either above (case $a=1$) or below (case $a=d-c-1)$.       
\end{proof}

\begin{remark}
    Note that \Cref{prop:deg_vertices} and \Cref{cor:simple_vertices} hold true more generally for slices of simple polytopes by a local argument.
\end{remark}

Note that if $u_i=0$ then  $\cube\cap H= Q \times [0,1]$ (up to permuting coordinates), where $Q$ is the slice by $H$ of the face of $\cube$ defined by $x_i=0$. This motivates the following definition.
\begin{definition}
	Let $P = \cube\cap H$ be a slice of $\cube$. $P$ is said to be \defn{reducible of rank $r$} if $r<d$ is the smallest integer satisfying
	\[
	P \cong Q \times \cube[{d-r}]
	\]
	where $Q$ is a slice of $\cube[r]$. Otherwise, if $r=d$, the slice is said to be \defn{irreducible}.
\end{definition}

Having $u_i=0$ allows for such a factorization, but the converse is not true in general. For example, any hyperplane cutting 4 parallel edges of the standard $3$-cube $\cube[3]$ is a combinatorial quadrilateral. The following result shows that, as soon as there is a non-simple vertex, irreducibility becomes completely determined by the zero entries of $u$.

\begin{proposition}\label{prop:u-iff-irreducible}
Let $P = \cube \cap H$ be a slice with a non-simple vertex. Under assumption \ref{assumption:through_origin}, a non-simple vertex is $\zero$ and $H = u^\perp$. Denote by $C=\set{i\in[d]}{u_i=0}$ of cardinality $c=|C|$, then $P$ is reducible of rank~$d-c$. In particular, a cube slice $P = \cube \cap H$ with $\zero$ non-simple is irreducible if and only if $u_i \neq 0$ for all $i \in [d]$.
\end{proposition}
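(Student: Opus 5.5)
The plan is to get the upper bound on the rank from an explicit product decomposition, and the matching lower bound by counting pyramid apices in the vertex figure of $P$ at the non-simple vertex $\zero$. Since pyramid apices are combinatorial invariants, comparing the two vertex figures at $\zero$ rules out any further segment factors.

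\emph{Upper bound.} Permute coordinates so that $C=\{d-c+1,\dots,d\}$. Because $u_i=0$ for $i\in C$, we have
\[
P = Q\times \cube[c], \qquad Q=\cube[d-c]\cap H',
\]
where $H'$ is the hyperplane in $\R^{d-c}$ with normal $(u_1,\dots,u_{d-c})$, all of whose entries are nonzero. Hence $P$ is reducible of rank at most $d-c$.

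\emph{Local data at $\zero$.} The hyperplane $H$ meets the interior of $\cube$: otherwise $P$ is a face of the cube, and every vertex of a cube face is simple. Let $a$ and $b$ be as in \cref{prop:deg_vertices}; then $a,b\ge 1$. Since $\zero$ is non-simple, \cref{cor:simple_vertices} gives $a\neq 1$ and $b\neq 1$, so $a,b\ge 2$. Next we identify the vertex figure of $P$ at $\zero$. The proof of \cref{prop:deg_vertices} shows that the edges of $P$ at $\zero$ are the $c$ cube edges $[\zero,e_k]$ with $k\in C$, together with the $ab$ segments $F_{i,j}\cap H$ with $u_i>0>u_j$. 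The $2$-faces and higher faces are described in the same way, via \cref{lem:slicefacedim}. Locally at $\zero$, the cube is the orthant cone, and $H$ cuts it in the product of an $a$-face cone and a $b$-face cone times the free directions. From this I expect
\[
\vf_P(\zero)\cong \simp[c-1]\join\bigl(\simp[a-1]\times\simp[b-1]\bigr).
\]
This is consistent with \cref{lemma:vertex_figure}, since $(c-1)+(a-1)+(b-1)+1=d-2$. Alternatively, the formula follows from the product structure: the vertex figure of $Q$ at $\zero$ is $\simp[a-1]\times\simp[b-1]$, and the vertex figure of a product with a segment is a pyramid over the vertex figure of the other factor.

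\emph{Apex count.} Call a vertex $w$ of a polytope $X$ an \emph{apex} if it lies in all facets of $X$ but one. Two facts are needed:
\begin{itemize}
\item Taking a pyramid over $X$ raises the number of apices by exactly one. The new apex is an apex, the old apices stay apices, and no other vertex becomes one.
\item $\simp[a-1]\times\simp[b-1]$ with $a,b\ge2$ has no apices. Each vertex $(i,j)$ avoids exactly two facets, namely $(\simp[a-1]\setminus i)\times\simp[b-1]$ and $\simp[a-1]\times(\simp[b-1]\setminus j)$.
\end{itemize}
It follows that $\vf_P(\zero)$ has exactly $c$ apices. Now suppose $P\cong Q'\times\cube[d-r]$ with $r<d-c$. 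The vertex of $P$ corresponding to $\zero$ has vertex figure an iterated pyramid, namely $\simp[d-r-1]\join\vf_{Q'}$, which has at least $d-r>c$ apices. This is a contradiction, so the rank is exactly $d-c$.

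The ``in particular'' statement is the case $c=0$. If all $u_i\neq 0$ then $P$ has rank $d$, so it is irreducible. Conversely, if some $u_i=0$ then $c\ge1$ and $P$ is reducible. The main obstacle is justifying the displayed vertex-figure identification rigorously. I would derive it from the product structure $P=Q\times\cube[c]$ together with a direct check of the facets of $Q$ through $\zero$. Each such facet is $H'$ intersected with a cube facet $\{x_i=0\}$, and these correspond exactly to the $a+b$ facets of $\simp[a-1]\times\simp[b-1]$.
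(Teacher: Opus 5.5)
Your proposal is correct and follows essentially the paper's route: the free coordinates give $P=Q\times\cube[c]$, the vertex figure at $\zero$ is $\simp[c-1]\join\bigl(\simp[a-1]\times\simp[b-1]\bigr)$ with $a,b\geq 2$, and any further segment factor is ruled out by counting the ``cone-point'' vertices of this vertex figure, of which there are exactly $c$. The only difference is the invariant. The paper counts vertices adjacent to all others in the graph of $\vf_P(\zero)$, and $K_a\square K_b$ contributes none. You instead count vertices lying in all facets but one. Both counts grow by exactly one under each pyramid and vanish on $\simp[a-1]\times\simp[b-1]$. The paper also takes $\vf_Q(\zero)\cong\simp[a-1]\times\simp[b-1]$ for granted at the same level of detail as you.
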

\begin{proof}

If $u_i=0$, then for all $p\in H$ also $p+e_i\in H$, and the $i$-th coordinate is free in $H$.
In particular, if $F$ is the facet of $\cube$ given by $x_i=0$, and $Q=F\cap H$, we get
    \[
    P=\cube\cap H= (F\times [0,1])\cap H=Q \times [0,1],
    \]
where we understand the product to have the $[0,1]$ factor on the $i$-th coordinate.
This shows that	
    \[
    P \cong Q \times \cube[{c}],
    \]
where $Q$ is a slice of $\cube[d-c]$. It remains to show that $c$ is the largest possible cube factor.

Let $R$ be a polytope and $k \geq 0$ with $P \cong R \times \cube[k]$, and let $\phi$ be an isomorphism of face lattices; write $\phi(\zero) = (r, w)$ with $r$ a vertex of $R$ and $w$ a vertex of $\cube[k]$. Since the vertex figure of a product is the join of the vertex figures,
\[
  \vf_P(\zero) \cong \vf_R(r) \join \vf_{\cube[k]}(w) = \vf_R(r) \join \simp[k-1],
\]
so the graph of $\vf_P(\zero)$ contains at least $k$ vertices adjacent to all other vertices, namely those of the simplex factor. On the other hand, since $\vf_Q(\zero) \cong \simp[a-1] \times \simp[b-1]$ for some $a,b\geq 2$ because $\zero$ is non-simple, we have
\[
  \vf_P(\zero) \;\cong\; \big(\simp[a-1] \times \simp[b-1]\big) \join \simp[c-1].
\]
In a join $X \join Y$ two vertices of $X$ are adjacent if and only if they are adjacent in $X$, hence a vertex of $X$ is adjacent to all other vertices of $X \join Y$ if and only if this holds in $X$. The graph of $\simp[a-1] \times \simp[b-1]$ is $K_a \,\square\, K_b$, which has no such vertex when $a, b \geq 2$: the vertex $(i,j)$ is not adjacent to $(i',j')$ for $i \neq i'$, $j \neq j'$. Therefore the graph of $\vf_P(\zero)$ has exactly $c$ vertices adjacent to all others, and $k \leq c$. Thus $P$ has rank $d-c$.
\end{proof}

Recall from \Cref{lemma:vertex_figure} that the vertex figure of every vertex $v$ of a slice $P = \cube \cap H$ is the join of a simplex with a product of simplices. When $a,b,c$ denote the number of edges incident to $v$ which are above, below, and contained in $H$, respectively, then the proof above reveals that 
\[
\vf_P(v) \cong \simp[c-1]\join \bigl(\simp[a-1]\times\simp[b-1]\bigr).
\]

\begin{remark}
    As a consequence of \Cref{prop:u-iff-irreducible}, although different vertices of $P$ that are vertices of $\cube$ may have different degrees, they yield the same value of $c$.
    For example, in dimension $5$, the hyperplane $u^\perp = (1,1,-2,1,0)$ defines the vertex $\zero$ of degree $c+ab = 1 + 3\cdot 1$, but it also contains the non-simple vertex $v = e_{123}$. 
    With respect to this vertex, there are $a = 2, b = 2, c = 1$ incident cube-edges above, below, and contained in $H$, respectively, and $\deg_{P}(v) = c + ab = 1 + 2 \cdot 2$.
\end{remark}
	
As shown in \Cref{cor:simple_vertices}, simple vertices of a slice may arise as intersections of the hyperplane $H$ with an edge or a vertex of the cube $\cube$. However, we prove now that if one is interested only in the combinatorial type of the slice, it is not restrictive to assume that simple vertices arise from edges. We will assume this reduction throughout the rest of the paper.
\begin{proposition}\label{prop:simple_vertices_through_edges}
    Let $P = \cube \cap H$ be a slice. Then there exists $P' = \cube \cap H'$ combinatorially equivalent to $P$ and such that all the simple vertices of $P'$ are intersections of $H'$ with the relative interior of edges of the cube.
\end{proposition}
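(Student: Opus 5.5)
We will perturb the hyperplane so that every simple cube-vertex of $P$ is pushed off $H$ while all non-simple vertices are kept on $H$, and the perturbation is small enough that the combinatorial type is preserved. The key point is that a simple cube-vertex of $P$ looks locally like the slice of a simple vertex cone truncated near one edge. Then moving $H$ slightly off $v$ on the appropriate side replaces $v$ by a single new vertex, namely the intersection point on the unique edge that lies on the other side. This does not change the face lattice.

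First we treat a single simple vertex. By \cref{cor:simple_vertices}, if $v\in\verts(P)\cap\verts(\cube)$ is simple, then among the cube edges at $v$ there are $c$ edges in $H$, $a=1$ edge $[v,w]$ strictly on one side (say $u\cdot(w-v)>0$), and $b=d-c-1$ edges on the other side. Near $v$, $P$ is the cone spanned by the $c$ in-plane edges and the $b$ directions $(w-v)$-combined-with-the-other-edges. Equivalently, $\vf_P(v)\cong \simp[c-1]\join(\simp[0]\times\simp[b-1])$, which is a $(d-2)$-simplex. We will translate $H$ to $H_\varepsilon=\{u\cdot x=t+\varepsilon\}$ with $\varepsilon>0$ small, in the direction of $w$. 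Every face of $\cube$ containing $v$ and meeting $H$ transversally keeps meeting $H_\varepsilon$ transversally. Moreover, $v$ is replaced by the point $H_\varepsilon\cap[v,w]$, which lies in the relative interior of that edge.

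However, a translation moves all other vertices of $\cube$ lying on $H$, and these must stay on $H$. Therefore, instead of translating, we will rotate $H$ slightly. We choose $H'$ to contain every non-simple vertex of $P$ that is a cube vertex. We also keep every free coordinate free, so the cube factor survives, and we put each simple cube-vertex $v$ slightly on the side opposite to its lone edge. Let $V_{ns}$ and $V_s$ denote the non-simple and simple cube-vertices on $H$. We look for an affine functional $\ell'=\ell+\varepsilon g$, where $\ell=u\cdot x-t$. Here $g$ should vanish on $V_{ns}$, and $g(v)$ should have the sign opposite to $\ell(w_v)$ for $v\in V_s$.

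The general principle for why the combinatorial type is preserved is the following. The face lattice of $\cube\cap\{\ell'=0\}$ is determined by the sign vector of $\ell'$ on $\verts(\cube)$, since each face is the intersection with a face of $\cube$ that is either contained in $H'$ or crossed by it. Under a small perturbation, nonzero signs are preserved. The only changes are on $V_s$, and at each such $v$ the local sign change turns $v$ into a point inside an edge. Checking the face correspondence locally, via the vertex-figure computation above, shows that the two face lattices are isomorphic: faces through $v$ correspond bijectively to faces through the new edge point.

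The main obstacle is the existence of such a $g$. An arbitrary sign pattern on $V_s$ together with zeros on $V_{ns}$ need not be realisable by an affine function; for example, $V_s$ may be affinely dependent with $V_{ns}$. We will avoid this by processing the simple vertices one at a time. At each step we need only an affine $g$ with $g=0$ on all other cube-vertices lying on the current hyperplane and $g(v)\neq0$. Such a $g$ exists if $v\notin\mathrm{aff}$ of those other vertices. If instead $v$ lies in that affine hull, we will argue that $v$ cannot be simple. Indeed, the other vertices on $H$ would then span, together with the local structure at $v$, a configuration forcing at least two edges on each side of $H$ at $v$, or else force $v$ into a face of $P$ containing it in a way that contradicts $a=1$. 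This is the step that needs care. After choosing $g$, the sign of $\varepsilon$ is picked to push $v$ to the correct side, and $|\varepsilon|$ is kept small enough that no other sign changes. Induction on $|V_s|$ then finishes the proof.
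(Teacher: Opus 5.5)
The gap is the step you flag as needing care, and it is more than a missing detail: the claim you plan to prove there is false whenever $H$ has free coordinates. If $u_k=u_l=0$ and $v\in H$ is a cube vertex, let $v',v'',v'''$ be obtained from $v$ by flipping coordinate $k$, coordinate $l$, or both. These are cube vertices in $H$, and $v=v'+v''-v'''$. So $v$ lies in the affine hull of the other cube vertices of $H$ whether or not it is simple. For example, with $H=\{x_1=x_2\}$ in $\R^4$, the vertex $\zero$ is simple ($a=b=1$, $c=2$).

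Even one free coordinate breaks the scheme. Take the paper's hyperplane $u=(1,1,-2,1,0)$ in $\R^5$. Then $\zero$ is simple ($a=3$, $b=1$, $c=1$), yet $\zero=e_5+e_{123}-e_{1235}$ with all three points on $H$. Moreover $e_{123}$ is non-simple, so neither a translation nor your one-vertex-at-a-time rotation is available. Your two requirements are also incompatible once $c\ge 1$. Keeping $x_k$ free while moving $v$ off the hyperplane moves its $k$-flip off as well, so $g$ cannot vanish on all other cube vertices of $H$.

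Two ideas are missing, and together they are exactly the paper's proof.

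First, split off the free coordinates: $P\cong Q\times\cube[c]$ with $Q$ cut out by a hyperplane with all coefficients nonzero. Degrees add in products, so the simple vertices of $Q'\times\cube[c]$ are exactly the pairs $(q,x)$ with $q$ simple in $Q'$. Such a pair lies in the relative interior of the cube edge $E\times\{x\}$. Hence it suffices to treat the case $u_j\neq 0$ for all $j$.

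Second, in that case let the lone edge at the simple vertex $v$ have direction $e_i$. For any cube vertex $x$ with $x_i=v_i$, the number $u\cdot(x-v)$ is a sum of terms $\pm u_j$, one for each edge direction at $v$ other than $e_i$. All these terms have the same strict sign, so the sum vanishes only if $x=v$. Hence every other cube vertex of $H$ lies in the opposite facet $\{x_i=1-v_i\}$, and $g(x)=x_i-(1-v_i)$ is the function you need. This is the paper's perturbation of the single coefficient $u_i$ (and of $t$ when $v_i=1$).

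Since no cube edge lies in $H$ once all $u_j\neq0$, no other cube vertex of $H$ is a cube-neighbour of $v$. So the local picture at the remaining vertices is unaffected, and your induction goes through. Alternatively, you can keep your setup and move each whole class of vertices obtained from $v$ by flipping free coordinates at once, using the same $g$. The same sign argument shows $g$ is a nonzero constant on that class and vanishes on every other cube vertex of $H$.
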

\begin{proof}
    Without loss of generality, see \ref{assumption:non_negative}, we assume that the coefficients in the equation of $H$ are non-negative. 
    We first consider the case where all coefficients are strictly positive.
    Let $v$ be a vertex of both $P$ and $\cube$, and $\supp(v) = \{i \in [d] \mid v_i = 1\}$ be the support of $v$. Because $v\in H$, we have $\sum_{i\in \supp(v)}u_i=t$.
    The edges of $\cube$ incident to $v$ are divided as follows. If $i\notin \supp(v)$, then the neighbor $x=v+e_i$ of $v$ satisfies
    \[
    \sum u_ix_i>t,
    \]
    whereas if $i\in \supp(v)$, then for the neighbor $x = v - e_i$ we have
    \[
    \sum u_ix_i<t.
    \]
    Thus, the numbers of edges of the cube incident to $v$ on the two sides of $H$ are $|\supp(v)|$ and $d-|\supp(v)|$, respectively. 
    If $v$ is simple, then $|\supp(v)|$ is either $1$ or $d-1$, as shown in \cref{cor:simple_vertices}.
    So every simple vertex of $P$ which is also a cube vertex is of one of the two forms
    \[
    v=e_i \qquad \text{or} \qquad v=\mathbf 1-e_i.
    \]
    Assume first that $v=e_i \in P$ is simple. Then, substituting into the hyperplane equation we get $u_i = t$. Hence, all other vertices of the cube that lie in $H$ must have zero in the $i$-th coordinate. Choose $\varepsilon>0$ sufficiently small and define
    \[
    H_\varepsilon = \set{x\in\mathbb R^d}{u_1x_1+\cdots+(u_i+\varepsilon)x_i+\cdots+u_dx_d=t}.
    \]
    Every cube vertex in $H$ different from $e_i$ remains in $H_\varepsilon$ because all such vertices have $x_i=0$. Moreover, by taking $\varepsilon$ sufficiently small, no cube vertex which was not on $H$ changes side with respect to the hyperplane. 
    The vertex $e_i$ itself is moved to the positive side of $H_\varepsilon$, whereas $0$ lies on the negative side. Hence $H_\varepsilon$ intersects the cube edge $[0,e_i]$ in its relative interior.
    We compare the local face structure before and after the perturbation. At the old vertex $e_i$, the cube facet $\{x_i=1\}$ cuts the section in the single point $e_i$, because
    \[
    x_i=1,\quad u_1x_1+\cdots+u_dx_d=t=u_i
    \]
    and $u_1,\dots,u_d >0$ forces all other coordinates of $x$ to be zero. Hence $\{x_i=1\}$ does not define a facet of $P$ at $e_i$. The non-redundant facets of $P$ incident to $e_i$ are precisely the $d-1$ faces
    \[
    P\cap\{x_j=0\},\qquad j\neq i.
    \]
    After the perturbation, the new point $v_\varepsilon = H_\varepsilon \cap [0,e_i]$ satisfies
    \[
    0< (v_\varepsilon)_i <1
    \qquad \text{and} \qquad
    (v_\varepsilon)_j=0 \text{ for } j\neq i.
    \]
    Thus, the non-redundant facets of $\cube\cap H_\varepsilon$ incident to $v_\varepsilon$ are again precisely
    \[
    (\cube\cap H_\varepsilon)\cap\{x_j=0\},\qquad j\neq i.
    \]
    So the simple vertex $e_i$ has just been replaced by the edge-intersection point $v_\varepsilon$, with the same incident facets. Since all other cube vertices keep the same sign relative to the hyperplane, the rest of the face lattice is unchanged. Therefore $\cube\cap H_\varepsilon$ is combinatorially equivalent to $P$.
    
    The case $v=\mathbf 1-e_i$ is analogous, and the perturbation can be obtained via
    \[
    H_\varepsilon = \set{x\in\mathbb R^d}{u_1x_1+\cdots+(u_i+\varepsilon)x_i+\cdots+u_dx_d=t+\varepsilon}.
    \]
    We can perform these perturbations vertex by vertex, since each affects a different coordinate of the hyperplane.
    This proves the result when all coefficients in the defining equation of $H$ are strictly positive.
    
    We now consider the case where not all coefficients are strictly positive. Without loss of generality, let $1,\dots,r$ be the indices of the non-zero coefficients, and $r+1,\dots,d$ the indices of the zero coefficients. Then, by \Cref{prop:u-iff-irreducible}, the decomposition $P = Q \times \cube[{d-r}]$ holds, with $Q$ a slice of $\cube[r]$ and the defining equation of its cutting hyperplane in $\R^r$ having strictly positive coefficients. We can thus find a combinatorially equivalent slice $Q'$ such that all simple vertices of $Q'$ (which have degree $r-1$) lie on relative interiors of edges of $\cube[r]$. These are precisely the vertices that give rise to simple vertices of $P' = Q' \times \cube[{d-r}]$, i.e., the simple vertices of $P'$ lie in relative interiors of edges of $\cube[r] \times \cube[{d-r}] = \cube[d]$. 
\end{proof}
	
We conclude this part mentioning interesting consequences of the above results. These facts are not strictly relevant for the recovery of slices from their graph, so the reader may jump to the next section. However, they could be useful for optimizing a potential implementation of the algorithm. We focus on admissible degrees of slices, when $d\geq 3$.

Let $v$ be a vertex of a slice $P=\cube[d]\cap H$ which is also a vertex of the cube. Using the notation of \cref{prop:deg_vertices}, we have $\deg_P(v)=c+ab$ with $a+b+c = d$. For fixed $c$, the product $ab$ is maximized when $a$ and $b$ are as close as possible. Hence, 
\[
\deg_P(v) \leq c+\left\lfloor \frac{(d-c)^2}{4}\right\rfloor \leq
\left\lfloor \frac{d^2}{4}\right\rfloor.
\]
Indeed, the maximum possible degree of a vertex of a slice of $\cube[d]$ is the right hand side, and it is attained when $c=0$ and 
\[
\{a,b\} = \left\{ \left\lfloor\frac d2\right\rfloor, \left\lceil\frac d2\right\rceil \right\}.
\]
In this way, we recover the main theorem in \cite{ONeil71:VerticesSlice}. If a slice $P$ contains a vertex with such maximal degree, then it must be necessarily irreducible.
Indeed, the reducibility of a slice adds restrictions on the degrees of its vertices. If $P$ is reducible of rank $r<d$, because the degree of a product is additive, it turns out that it cannot reach the maximal degree, and also that the degree of any non-simple vertex of $P$ is at least $d+r-4$. 
Finally, as a consequence, a slice with a non-simple vertex has rank $4$ if and only if it has a non-simple vertex of degree $d$. In this case, all its non-simple vertices have degree $d$.

\begin{remark}\label{rmk:slice_slab_correspondance}
    Every slice of $\cube[d]$ can be viewed affinely as a slab of a lower-dimensional cube, i.e., as the intersection of $\cube[{d-1}]$ with two halfspaces which are bounded by parallel hyperplanes. Indeed, let $P=\cube[d]\cap H$. Without loss of generality, by \labelcref{assumption:non_negative} and possibly permuting coordinates, we may assume that $u_d>0$. On $H$, the last coordinate is determined by $x_d=\frac{1}{u_d} \left(t-\sum_{i=1}^{d-1}u_ix_i\right)$. Therefore, projection onto the first $d-1$ coordinates gives an affine bijection between $P$ and
    \[
    S= \set{x\in\cube[d-1]}{t-u_d\leq \sum_{i=1}^{d-1}u_ix_i \leq t}.
    \]
    Thus $P$ is affinely equivalent to the portion of $\cube[d-1]$ lying between two parallel hyperplanes. Under this correspondence, every vertex of $P$ which is also a vertex of $\cube[d]$ projects to a cube vertex lying on one of the two boundary hyperplanes of the slab.
\end{remark}

\begin{example}[Slices as slabs]\label{ex:slice_slab}
    \begin{figure}[ht!]
		\centering
		\tdplotsetmaincoords{70}{50}

\begin{tikzpicture}[tdplot_main_coords, scale=2.5, line join=round, line cap=round]

\definecolor{poly_color}{named}{Periwinkle}
\definecolor{oblique_color}{named}{PineGreen}

\coordinate (A) at (0,0,0);
\coordinate (B) at (1,0,0);
\coordinate (C) at (0,1,0);
\coordinate (D) at (1,0,1);
\coordinate (E) at (0,1,1);
\coordinate (F) at (1,1,1);
\coordinate (G) at (1,1,0);
\coordinate (H) at (0,0,1);

\tikzset{facet/.style={draw=none, fill opacity=0.3}}

\draw[thick,dotted] (G)--(C);
\draw[thick,dashed,poly_color] (C)--(F);
\draw[thick,dashed,poly_color] (C)--(E);
\draw[thick,dashed,poly_color] (B)--(C);
\draw[thick,dashed,poly_color] (A)--(C);


\filldraw[facet, fill=white, fill opacity=0.5] (A)--(B)--(F)--(E)--cycle;

\filldraw[facet, fill=oblique_color, fill opacity=0.4] (B)--(C)--(F)--cycle;

\filldraw[facet, fill=poly_color] (A)--(B)--(C)--cycle;

\filldraw[facet, fill=poly_color] (A)--(B)--(D)--cycle;

\filldraw[facet, fill=poly_color] (D)--(E)--(F)--cycle;

\filldraw[facet, fill=poly_color] (A)--(C)--(E)--cycle;

\filldraw[facet, fill=poly_color] (B)--(D)--(F)--cycle;

\filldraw[facet, fill=poly_color] (C)--(E)--(F)--cycle;

\filldraw[facet, fill=oblique_color, fill opacity=0.4] (A)--(D)--(E)--cycle;



\draw[thick,poly_color] (A)--(E)--(F)--(D)--cycle;
\draw[thick,poly_color] (B)--(D);
\draw[thick,poly_color] (A)--(B);
\draw[thick,poly_color] (D)--(E);
\draw[thick,poly_color] (B)--(F);

\draw[thick,dotted] (H)--(A);
\draw[thick,dotted] (H)--(D);
\draw[thick,dotted] (H)--(E);
\draw[thick,dotted] (G)--(B);
\draw[thick,dotted] (G)--(F);

\foreach \P in {A,B,C,D,E,F}
  \fill (\P) circle (0.4pt);

\end{tikzpicture}
		\caption{The octahedral slab $S$ of $\cube[3]$ from \cref{ex:slice_slab}. }
		\label{fig:slice_slab}
	\end{figure}
	Consider the slice of $\cube[4]$ defined by the hyperplane 
	\[
	   H = \set{x \in \R^4}{x_1+x_2-x_3-x_4=0}.
	\]
	This is an octahedron. We can view it, according to \cref{rmk:slice_slab_correspondance}, as a slab of $\cube[3]$. Indeed, it corresponds to 
	\[
	S = \set{x \in C_3}{0 \leq x_1+x_2-x_3 \leq 1},
	\]
	shown in violet in \cref{fig:slice_slab}. The two parallel hyperplanes cutting out the slab are shaded in green.
\end{example}

\subsection{Hypersimplex slices}
A special family of polytopes that arise as cube slices is that of \emph{hypersimplices}.

\begin{definition}
    The \defn{$(d,k)$-hypersimplex} is the $(d-1)$-dimensional polytope
    \[
    \Delta(d,k) = \cube \cap \set{x \in \R^d}{\sum_{i=1}^d x_i = k}.
    \]
    Note that $\Delta(d,k) \cong \Delta(d,d-k)$ are combinatorially (and even affinely) equivalent.
\end{definition}

Their definition places them automatically as cube slices, but actually they cover all possible combinatorial slices with specific degree patterns. This is the content of the next results.

\begin{proposition}
    \label{prop:hypersimplices}
    Let $P=\cube\cap H$ be a slice so that $\verts(P)\subset\verts(\cube)$. Consider $v$ a vertex of $P$ and let $c$ be the number of vertices of $\cube$ adjacent to $v$ contained in $H$, and $a$ the number of vertices of $\cube$ adjacent to $v$ that lie above $H$. Then $P=\Delta(d-c,a)\times \cube[{c}]$.
\end{proposition}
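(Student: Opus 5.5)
We reduce to the irreducible case and then show that the normal vector of $H$ must be constant up to sign flips of coordinates.

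\emph{Reduction.} Apply \ref{assumption:through_origin} at $v$, so $v=\zero$ and $H=u^\perp$. The $c$ cube-neighbours of $\zero$ contained in $H$ are exactly the $e_i$ with $u_i=0$. As in the proof of \cref{prop:u-iff-irreducible}, these coordinates are free, so $P=Q\times\cube[c]$ with $Q=\cube[d-c]\cap u'^\perp$, where $u'$ has all entries nonzero. Every vertex of $P$ is a cube vertex, and cube vertices of $P$ correspond to cube vertices of $Q$ times vertices of $\cube[c]$. Hence every vertex of $Q$ is a vertex of $\cube[d-c]$, and it suffices to treat $c=0$: assuming all $u_i\neq0$ and $\verts(P)\subset\verts(\cube)$, show $P\cong\Delta(d,a)$.

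\emph{Normalization.} Reflect the coordinates with $u_i<0$ via $x_i\mapsto 1-x_i$. This makes all $u_i>0$, moves $v$ to a vertex $w$ with $\supp(w)=\{i: u_i<0\}$, and gives $H=\{\sum u_ix_i=t\}$ with $t=\sum_{i\in\supp(w)}u_i$. In these coordinates the neighbours above $H$ are $w+e_i$ for $i\notin\supp(w)$. Up to swapping the roles of the two sides (recall $\Delta(d,k)\cong\Delta(d,d-k)$), $a$ is either $d-|\supp(w)|$ or $|\supp(w)|$.

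\emph{Key step: all $u_i$ are equal.} Suppose not, and pick $i\in\supp(w)$, $j\notin\supp(w)$ with $u_i\neq u_j$. The 2-face of $\cube$ at $w$ spanned by directions $i,j$ has vertices $w$, $w-e_i$ (below $H$), $w+e_j$ (above $H$), and $w-e_i+e_j$, whose value is $t+u_j-u_i\neq t$. So this face meets $H$ in a segment from $w$ to a point in the relative interior of the edge $[w-e_i,\,w-e_i+e_j]$ or $[w-e_i+e_j,\,w+e_j]$. That point is a vertex of $P$ (by \cref{lem:slicefacedim}, the intersection of $H$ with an edge of $\cube$ through its relative interior is a vertex of $P$) that is not a cube vertex, a contradiction.

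Hence $u_i=u_j$ whenever $i\in\supp(w)$ and $j\notin\supp(w)$. If $w$ has both kinds of coordinates, this forces all $u_i$ to be equal. The degenerate cases $\supp(w)=\emptyset$ or $\supp(w)=[d]$ would make $H$ a supporting hyperplane, contradicting $\dim P=d-1$ after reduction. With all $u_i$ equal, $H$ is $\{\sum x_i=|\supp(w)|\}$, so $P=\Delta(d,|\supp(w)|)\cong\Delta(d,a)$. Undoing the reflections and the reduction gives $P\cong\Delta(d-c,a)\times\cube[c]$.

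\emph{Main obstacle.} The subtle point is the bookkeeping in the reduction: checking that the hypothesis $\verts(P)\subset\verts(\cube)$ passes to $Q$, and that $a$ is preserved. The geometric core is the single 2-face argument above.
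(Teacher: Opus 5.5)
Your proof is correct and is essentially the paper's argument. The paper likewise takes, for each above-direction $i$ and below-direction $j$ at $v$, the square spanned by $e_i,e_j$, and uses that $H$ cannot cross the relative interior of a cube edge to force $H$ through the diagonal, so $|u_i|=|u_j|$. It then normalizes with the same reflection $x_i\mapsto 1-x_i$; you only differ cosmetically by splitting off the zero coordinates first and reflecting before the key step.

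One small slip: when $d-c=1$, i.e.\ $P$ is a facet of the cube, your degenerate case (all coordinates of $w$ of one kind) does occur, so it does not contradict $\dim P=d-1$. There, however, both $Q$ and $\Delta(1,a)$ are points, so the conclusion holds trivially.
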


\begin{proof}
    Without loss of generality, see \ref{assumption:through_origin}, assume that $H=u^\perp$ and $v=\zero$. 
    If all the vertices of $\cube$ adjacent to $\zero$ lie on one side of $H$ or in $H$ itself, then $H$ is a supporting hyperplane defining an $c$-dimensional face of~$\cube$, and $a \in \{0, d - c\}$. Since $\Delta(d-c,0) \cong\Delta(d-c,d-c)$ is a single point, we have $P = \Delta(d-c,a) \times \cube[c]=\cube[c]$.
    
    We now consider the case where some edges of $\cube$ lie above $H$, and some lie below. Let $A$ be the set of indices of neighbors of $v=\zero$ which lie above $H$, $B$ of those that lie below, and $C$ of those that are contained in $H$. For each $i \in C$, we have $e_i \in u^\perp$ and hence $u_i = 0$.
    For each $i \in A$ and $j \in B$, $H$ intersects the $2$-dimensional face $F_{i,j} = \conv(0, e_i, e_j, e_i + e_j)$ in its relative interior, separating the vertices $e_i$ and $e_j$. In particular, $H$ does not contain any edge of $F_{i,j}$. As $H$ does not intersect any edge of $\cube$ in its relative interior by hypothesis, we have $F_{i,j} \cap H = \conv(0, e_i + e_j)$. Hence, $\langle u, e_i + e_j \rangle = u_i + u_j = 0$, so $u_i = - u_j$. Ranging over all such pairs $i,j$ implies that $u \in \{0,\pm u_1\}^d$, and, up to a non-zero multiple, we can assume that 
    \[
    u_i = \begin{cases}
        1 & \text{if } e_i \text{ lies above } H, \\
        -1 & \text{if } e_i \text{ lies below } H, \\
        0 & \text{otherwise}. \
    \end{cases}
    \]
    We now define the affine map $\varphi(x) = Mx +b$ with the diagonal matrix $M \in \R^{d \times d}$ and $b\in \R^d$ as
    \[
    M_{ii} = \begin{cases}
        -1 & \text{if } e_i \text{ lies above } H, \\
        1 & \text{otherwise}, \ 
    \end{cases}
    \qquad
    \text{and}
    \qquad
    b_{i} = \begin{cases}
        1 & \text{if } e_i \text{ lies above } H, \\
        0 & \text{otherwise}. \
    \end{cases}
    \]
    Then $\varphi(\cube) = \cube$ and
    \[
    \varphi(H) = \set{x \in \R^d}{\sum_{\substack{i \text{ s.t.}\\ e_i \not\in H}} x_i = a} .
    \]
    Up to permutation of coordinates, $\varphi(P)$ equals $\Delta(d-c,a) \times \cube[{c}]$. 
\end{proof}

As a consequence, cube slices that meet the relative interior of an edge only if they contain the edge have constant vertex degree. More precisely, the vertices of $\Delta(d-c,a) \times \cube[{c}]$ have degree 
\[
a(d-c-a) + c.
\]
Hence, such a slice is either simple or it has no simple vertex. 

The following statement proves that slices of a cube with the largest possible number of non-simple vertices must arise as a Cartesian product of the octahedron $\Delta(4,2)$ (cf. \cref{ex:slice_slab}) and a cube of the correct dimension. As before, this is not strictly relevant for the general reconstruction, but may be helpful for practical purposes and optimized implementations of a recovering algorithm.

\begin{proposition}
    For $d\geq 4$, the slices of $\cube$ with the maximum number of non-simple vertices are combinatorially equivalent to the product $\Delta(4,2) \times \cube[{d-4}]$ of a $(4,2)$-hypersimplex (octahedron) and a cube.
\end{proposition}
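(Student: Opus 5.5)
We aim to show that the number of non-simple vertices of a slice $P=\cube\cap H$ is at most $\binom{4}{2}\cdot 2^{d-4}=6\cdot 2^{d-4}$, and that equality forces $P\cong\Delta(4,2)\times\cube[d-4]$. That product attains the bound: by \cref{prop:hypersimplices} its vertices are cube vertices of degree $2\cdot 2+(d-4)=d>d-1$, so all $6\cdot 2^{d-4}$ of them are non-simple.

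By \cref{cor:simple_vertices}, every non-simple vertex $v$ of $P$ is a vertex of $\cube$ with $a,b\ge 2$. Here $a$ and $b$ count the cube edges at $v$ lying strictly above and below $H$, so with the notation of \cref{prop:deg_vertices} we have $a+b+c=d$. We first reduce to the case where $\zero$ is non-simple, using \ref{assumption:through_origin}. Then \cref{prop:u-iff-irreducible} gives $P=Q\times\cube[c]$, where $Q$ is an irreducible slice of $\cube[d-c]$ whose normal vector has all entries nonzero. The non-simple vertices of $P$ are exactly (non-simple vertex of $Q$) $\times$ (vertex of $\cube[c]$). It therefore suffices to prove the following claim.

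\emph{Claim.} For an irreducible slice $Q$ of $\cube[m]$ with $m\ge 4$, the number $N(Q)$ of non-simple vertices satisfies $N(Q)\le 6\cdot 2^{m-4}$, with equality only if $m=4$ and $Q\cong\Delta(4,2)$. Moreover, for $m=5$ the bound can be sharpened to $N(Q)\le 11$.

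Granting the claim, $N(P)=2^{c}N(Q)\le 6\cdot 2^{d-4}$, with equality only when $d-c=4$ and $Q$ is the octahedron. Note also that $m\le 3$ gives no non-simple vertices.

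To bound $N(Q)$, we apply \ref{assumption:non_negative} after flipping coordinates so that $u>0$ and $H=\{\langle u,x\rangle=t\}$. A cube vertex $x=e_S$ lies in $H$ exactly when $u(S)=t$. Its $a$ and $b$ are then $m-|S|$ and $|S|$, so non-simplicity means $2\le|S|\le m-2$. The vertex set is thus $\mathcal S=\{S: u(S)=t\}$, an antichain, since $u>0$. A sharper constraint comes from positivity: if $S,S'\in\mathcal S$, then $S\setminus S'$ and $S'\setminus S$ have equal $u$-weight.

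We bound $|\mathcal S|$ by induction on $m$ via the slab picture of \cref{rmk:slice_slab_correspondance}. Fix the coordinate $m$ and split $\mathcal S$ into sets containing $m$ and sets not containing it. These two parts are level sets $u'(S)=t-u_m$ and $u'(S)=t$ of $u'=(u_1,\dots,u_{m-1})$ in $\cube[m-1]$, that is, two slices of $\cube[m-1]$ by parallel hyperplanes. Each family consists of the cube vertices on a slice of $\cube[m-1]$.

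We then need a bound on the non-simple cube vertices of a slice of $\cube[m-1]$, counted with respect to $m-1$. A vertex of $P$ can be non-simple in $\cube[m]$ while being simple (or of degree with $|S|=1$) in $\cube[m-1]$, and this boundary shift must be tracked. Summing the two halves and using the induction hypothesis then gives roughly $2\cdot 6\cdot 2^{m-5}$. Equality requires both parallel slices to be extremal, and by induction each is $\Delta(4,2)\times\cube[m-5]$, which contains a free coordinate. That free coordinate would make the two hyperplanes share a zero entry of $u'$ while $u_m\neq 0$; we then need to show this forces $u$ itself to have a zero entry, contradicting irreducibility. The base case $m=4$ is a finite check: weights $u>0$ on $[4]$ with many equal-weight $2$-subsets. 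Complementary pairs $\{S,[4]\setminus S\}$ share the weight $t$ only if $u(S)=u([4])/2$, and all six $2$-sets occur only for $u\propto\mathbf 1$, giving $\Delta(4,2)$.

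The main obstacle is the inductive step: the counting must hold exactly, handling vertices that change from non-simple to simple under deletion of a coordinate, and the equality case must be ruled out. If this becomes messy, the fallback is a direct extremal argument on $\mathcal S$. The equal-weight exchange property bounds the size of an antichain $\mathcal S$ with $2\le|S|\le m-2$ and all $u_i>0$, and the aim is to show its maximum is $6$ at $m=4$ and grows by a factor strictly less than $2$ per added irreducible coordinate.
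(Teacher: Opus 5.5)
\textbf{There is a genuine gap: the bound for irreducible slices with $m\ge 5$ is never proved.}

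Your reduction to the irreducible case is correct, and tidier than an induction over reducible slices. With $c$ the true cube factor from \cref{prop:u-iff-irreducible} you get $N(P)=2^{c}N(Q)$. For $u>0$, the non-simple vertices of $Q$ are indexed by sets in the antichain $\mathcal S=\{S\subseteq[m]: u(S)=t\}$.

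The gap is your Claim for $m\ge 5$, which is the whole content of the proposition beyond $d=4$. The induction you outline is not carried out:
\begin{itemize}
\item The two halves give only ``roughly'' $2\cdot 6\cdot 2^{m-5}$. The sets in $\mathcal S$ that are non-simple in $\cube[m]$ but simple in the slice of $\cube[{m-1}]$ are never accounted for. These are the sets of size $m-2$ avoiding $m$, and the sets $\{i,m\}$.
\item The bound $N(Q)\le 11$ at $m=5$ is what would make the inequality strict for all larger $m$, but it is asserted without proof.
\item The equality argument via a ``free coordinate'' forcing a zero entry of $u'$ is confused, since $u'>0$ has no zero entries at all.
\end{itemize}
The fallback paragraph only restates the goal. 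As written, the inequality $N(Q)<6\cdot 2^{m-4}$ for irreducible $Q$ with $m\ge 5$ is not established.

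The missing step is one line: apply Sperner's theorem to the antichain you already found. This gives $N(Q)\le|\mathcal S|\le\binom{m}{\lfloor m/2\rfloor}$. For $m\ge 5$ we have $\binom{m}{\lfloor m/2\rfloor}<6\cdot 2^{m-4}$: it is $10<12$ at $m=5$, and $\binom{m+1}{\lfloor (m+1)/2\rfloor}\le 2\binom{m}{\lfloor m/2\rfloor}$ by Pascal. This is exactly how the paper disposes of the irreducible case. Together with your $m=4$ computation, it finishes the proof.

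If you want to keep your splitting argument, apply it to all of $\mathcal S$ rather than only its non-simple part.
\begin{itemize}
\item Each half is then an entire level set of the positive weight $u'$ on $2^{[m-1]}$. So $|\mathcal S|$ at most doubles, with no boundary correction.
\item At $m=5$ the two halves are level sets of the same $u'$ at the distinct values $t$ and $t-u_5$. They therefore cannot both be $\binom{[4]}{2}$, the unique $6$-element antichain of $2^{[4]}$ (by the LYM inequality or a direct check).
\item This gives $|\mathcal S|\le 11$, and hence $|\mathcal S|\le 11\cdot 2^{m-5}<6\cdot 2^{m-4}$ for all $m\ge 5$.
\end{itemize}
This version needs the bound $6$, with unique extremizer, for arbitrary level sets in $2^{[4]}$. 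Your base case only checks it among $2$-subsets.
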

\begin{proof}
    We proceed by induction on $d$. For $d=4$ one can explicitly compute the $30$ combinatorial types of slices, see \cite{BraMer25:6Cube}. Among these, there is a unique combinatorial type with 6 vertices, each of them being non-simple and of degree 4, and there is no other type with more non-simple vertices. By \cref{prop:hypersimplices}, this slice is an octahedron, namely $\Delta(4,2)$. 
    
    Assume now that $d>4$, that the statement is true for $\cube[{d-1}]$, and let $P$ be a slice of $\cube$ with the maximum number of non-simple vertices. If $P$ factorizes as $P=Q \times \cube[{d-r}]$, where $Q$ is a slice of $\cube[r]$, $r \leq d-1$, then by the inductive hypothesis $P = \Delta(4,2) \times \cube[{d-4}]$ and it has $6\cdot 2^{d-4}$ non-simple vertices. 
    
    Assume by contradiction that $P$ is irreducible. Without loss of generality, see \ref{assumption:non_negative}, $P=\cube\cap H$ for $H = \{\sum u_i x_i = t\}$ with $u_i>0$ for all $i=1,\ldots,d$. The non-simple vertices of $P$ must be vertices of $\cube$. Let $v, w$ be such vertices. Then $\supp (v)$ and $\supp (w)$ must be incomparable subsets of $[d]$: the supports of the non-simple vertices form what is called an antichain in $2^{[d]}$. Indeed, if $\supp (v) \subset \supp (w)$, then 
    \[
    t = \sum_{i \in \supp (v)} u_i < \sum_{i \in \supp (w)} u_i = t
    \]
    gives a contradiction. By Sperner's theorem, an antichain has at most $\binom{d}{\lfloor d/2 \rfloor}$ members, providing an upper bound on the number of non-simple vertices of $P$. 
    By a simple inductive argument with Pascal's binomial identity, one can show that 
    \[
    \binom{d}{\lfloor d/2 \rfloor} < 6 \cdot 2^{d-4},
    \]
    and the claim follows.
\end{proof}
	
\section{Determining the dimension}\label{sec: dim}
In this section, we will see that the graph of a cube slice completely determines its dimension.
We already proved that when all vertices of a section are vertices of the cube, the combinatorial type of the slice is determined. Thanks to the following result, we can actually determine if this is the case from the graph, and then obtain the dimension of the slice.

\begin{lemma}
    \label{lemma:hypersimplex_not_simple_slice}
    Let $2\leq k\leq n-2$. Then the graph of the product $\Delta(n,k)\times \cube[m]$ is not the graph of any simple polytope.
\end{lemma}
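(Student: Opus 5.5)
Suppose, for contradiction, that $G = G(\Delta(n,k)\times\cube[m])$ is the graph of a simple polytope $S$. The plan is to get a contradiction by comparing two invariants: the dimension of $S$, which is forced by regularity, and the number of induced $4$-cycles through a vertex, which simplicity bounds.

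First, $G$ is regular. By \cref{prop:hypersimplices} and the remark after it, every vertex of $\Delta(n,k)\times\cube[m]$ has degree
\[
D = k(n-k)+m .
\]
A simple polytope has $D$-regular graph exactly when its dimension is $D$, so $\dim S = D$.

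Next, count induced $4$-cycles at a vertex. In a simple $D$-polytope, every pair of edges at a vertex spans a unique $2$-face, and every induced $4$-cycle through $v$ using two given edges bounds a $2$-face. This is the standard fact used in Kalai-type arguments: in a simple polytope, the $2$-faces containing $v$ and two of its edges are determined, and an induced cycle of length $4$ is always a face boundary, since shortest induced cycles through two edges are faces. Consequently, a vertex of $S$ lies on at most $\binom{D}{2}$ induced $4$-cycles. Moreover, for any two neighbors $x,y$ of $v$, there is at most one induced $4$-cycle of the form $v,x,z,y$.

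Now exhibit a violation inside the hypersimplex factor. Take $v=e_{[k]}\in\Delta(n,k)$, whose neighbors are $v-e_i+e_j$ with $i\le k<j$. Pick distinct $i,i'\le k$ and $j,j'>k$, and consider the two neighbors $x=v-e_i+e_j$ and $y=v-e_{i'}+e_{j'}$. These are both adjacent to the two distinct vertices
\[
v-e_{ii'}+e_{jj'} \quad\text{and}\quad v-e_{ii'}+e_{jj'},
\]
which are equal as written, so this pair does not directly help. Instead, choose $x=v-e_i+e_j$ and $y=v-e_i+e_{j'}$. These are adjacent to each other, so any $4$-cycle they span is not induced, and this pair does not help either.

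The approach I will use instead is a triangle count. In a simple polytope the graph has a triangle at $v$ only inside a $2$-face that is a triangle. Two edges at $v$ lying in a common triangle determine that $2$-face, so the triangles at $v$ use disjoint pairs of edges in the following sense: each edge lies in at most $D-1$ of the $2$-faces at $v$, which gives no contradiction yet. The stronger rigidity is this: in a simple polytope, if $vx$, $vy$, $xy$ are edges, then $\{v,x,y\}$ is a $2$-face. The third edge $vz$ with $z$ adjacent to $x$ then lies with $vx$ in a $2$-face. Use the hypersimplex: with $k\ge2$ and $n-k\ge2$, the neighbors $x=v-e_1+e_{k+1}$, $y=v-e_1+e_{k+2}$, $z=v-e_2+e_{k+1}$ are all adjacent to $v$, $y$ and $z$ are adjacent to $x$, and $y$ is not adjacent to $z$. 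So $\{v,x,y\}$ and $\{v,x,z\}$ are triangular $2$-faces.

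The vertex figure of $S$ at $v$ is a simplex, whose graph is complete. In the vertex figure, the triangles $\{v,x,y\}$ and $\{v,x,z\}$ give edges $xy$ and $xz$. In the simplex there is also the $2$-face through $vy$ and $vz$. That face contains a path from $y$ to $z$ avoiding $v$, and its boundary cycle through $v$ uses $y$ and $z$. This face is an induced cycle $v,y,\dots,z$ in $G$ not containing $x$ as a chord; since $x$ is adjacent to $v$, $y$ and $z$, the $2$-face would have $x$ adjacent to three of its vertices. The key step, and the main obstacle, is to turn this into a contradiction. I would try the known fact that in a simple polytope the $3$-face spanned by $vx$, $vy$, $vz$ is a simple $3$-polytope containing the triangles $vxy$ and $vxz$. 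In that $3$-polytope the vertex $x$ has degree $3$ with neighbors $v$, $y$, $z$, so the third face at $x$ is a face containing $y$, $x$, $z$. Together with the face at $v$ through $y$ and $z$, this gives two distinct faces sharing $y$ and $z$, and hence the edge $yz$, contradicting the non-adjacency of $y$ and $z$. If one of those faces is the triangle $xyz$, then $yz$ is again an edge, a contradiction. The remaining case is handled by the simple $3$-polytope's vertex-$v$ structure, so $S$ cannot exist.
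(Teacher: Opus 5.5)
Your final argument is correct, but it sits beneath two abandoned attempts that you should delete.

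\begin{itemize}
\item The induced $4$-cycle count can never give a contradiction. In $G$, every pair of edges at a vertex lies in exactly one triangle or one induced $4$-cycle. For the Johnson edges this is $b\binom{a}{2}+a\binom{b}{2}+2\binom{a}{2}\binom{b}{2}=\binom{ab}{2}$ with $a=k$, $b=n-k$. This is exactly what happens in a simple polytope, so your bound $\binom{D}{2}$ is attained rather than violated.
\item The vertex-figure paragraph yields nothing, since a vertex outside a $2$-face may be adjacent to three of its vertices (as in a tetrahedron).
\end{itemize}

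What works is the induced diamond $v=[k]$, $x=v-e_1+e_{k+1}$, $y=v-e_1+e_{k+2}$, $z=v-e_2+e_{k+1}$, with $y\not\sim z$. Note explicitly that it lies in a single fiber $J(n,k)\times\{u\}$, so it stays induced in the product graph.

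The one step you leave vague is that the $2$-face at $x$ through $xy,xz$ and the $2$-face at $v$ through $vy,vz$ are distinct. Justify it: if they coincided, that polygon would contain $v$ and $x$, hence the edge $[v,x]$, and would then have three edges at $v$. In fact the $3$-face $T$ and the triangles are unnecessary. The $2$-faces of $S$ spanned by $\{vy,vz\}$ at $v$ and by $\{xy,xz\}$ at $x$ both contain $y$ and $z$. If they were distinct, their intersection would be the edge $[y,z]$. If equal, the polygon contains $[v,x]$, which is again a contradiction.

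Compared with the paper, you take a genuinely different route to the same final contradiction: two distinct $2$-faces sharing a non-adjacent pair of vertices. The paper takes two induced $4$-cycles $\mathfrak{C}_1,\mathfrak{C}_2$ in a fiber that share the opposite vertices $S\cup\{x,y\}$ and $S\cup\{z,t\}$. It then invokes the result of Pfeifle--Pilaud--Santos that induced $4$-cycles of simple polytopes are $2$-faces. You instead produce the two faces directly from the uniqueness of the $2$-face spanned by two edges at a vertex of a simple polytope.

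What your route buys:
\begin{itemize}
\item It is self-contained and needs no citation.
\item It proves a more general, easily checked criterion: the graph of a simple polytope contains no induced $K_4-e$. Equivalently, every vertex neighborhood induces a disjoint union of cliques, whereas here a neighborhood induces $K_k\square K_{n-k}$ plus $m$ isolated vertices.
\end{itemize}
The paper's version is shorter once the cited result is granted.
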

\begin{proof}
    The graph of $\Delta(n,k)\times \cube[m]$ is the Cartesian product of the Johnson graph $J(n,k)$ with the $m$-dimensional cube graph. Its vertices are therefore pairs $(I,u)$, where $I$ is a $k$-subset of $[n]$ and $u$ is a vertex of the $m$-cube. Two such pairs are adjacent if either the Johnson coordinate changes by one element and the cube coordinate stays fixed, or the cube coordinate changes by one cube edge and the Johnson coordinate stays fixed.
    
    Assume by contradiction that this is the graph of a simple polytope $P$. Fix any subset $S$ of $[n]$ of size $k-2$ and let $x,y,z,t \in [n]\setminus S$. They exist because $k\leq n-2$. Our goal is to construct two $4$-cycles, all lying in the fixed cube fiber over $u$, that cannot simultaneously be both faces of $P$. These are 
    \begin{align*}
        \mathfrak{C}_1 &= \big((S\cup x\cup y,u), (S\cup x\cup z,u), (S\cup z\cup t,u), (S\cup y\cup t,u), (S\cup x\cup y,u) \big),\\
        \mathfrak{C}_2 &= \big((S\cup x\cup y,u), (S\cup x\cup t,u), (S\cup z\cup t,u), (S\cup y\cup z,u), (S\cup x\cup y,u)).
    \end{align*}
    Both cycles are induced: the opposite vertices differ by two exchanges and the cube coordinate stays the same.
    By \cite[Proposition~1.10]{PfeiflePilaudSantos}, $\mathfrak{C}_1$ and $\mathfrak{C}_2$ must be graphs of $2$-faces of $P$, which we denote by $F_1$ and $F_2$ respectively. 
    
    The two cycles have two vertices in common: $(S\cup x\cup y,u)$ and $(S\cup z\cup t,u)$. Therefore, the intersection $F=F_1\cap F_2$ is a face of $P$, and more precisely the segment connecting those two vertices.
    However, they are not adjacent so such edge does not exist, giving a contradiction. This proves the claim.
\end{proof}

For the final dimension-recovery result, recall that since we are interested in slices up to combinatorial type, we can always assume that all their simple vertices arise as intersections of the defining hyperplane with the relative interior of cube edge, according to \cref{prop:simple_vertices_through_edges}.
\begin{proposition}\label{prop:dimension}
    The vertex-edge graph $G$ of a slice of a cube $P=\cube\cap H$ completely determines the dimension~$d$ of $\cube$. In particular, if $G$ is isomorphic to the Cartesian product $J(n,k) \square \cubegraph[m]$ of a Johnson graph with a cube graph, then $P \cong \Delta(n,k)\times \cube[m]$ and $d = n+m$.
\end{proposition}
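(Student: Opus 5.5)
Since the statement concerns a slice $P=\cube\cap H$, which is a $(d-1)$-polytope, recovering $d$ is equivalent to recovering $\dim P$. I will split into two cases according to whether $G$ is a regular graph whose structure forces $P$ to be simple, or not. By \cref{prop:simple_vertices_through_edges} we may assume every simple vertex of $P$ is the intersection of $H$ with the relative interior of a cube edge, and every non-simple vertex is a cube vertex. Then \cref{prop:hypersimplices} has a graph-theoretic shadow. If $P$ has no simple vertices, then $\verts(P)\subset\verts(\cube)$ and $P\cong\Delta(d-c,a)\times\cube[c]$, so $G\cong J(d-c,a)\square\cubegraph[c]$ with $2\le a\le d-c-2$. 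If instead $P$ has a simple vertex, then by the remark following \cref{prop:hypersimplices} (constant degree) and the perturbation above, we argue as follows.

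\emph{Case 1: $G$ is the graph of a simple polytope.} If $P$ has any non-simple vertex, then $P$ is not simple, so its graph is not that of a simple polytope in the labeled sense. The cleaner route is this: if $P$ is simple, then $d-1$ equals the (constant) degree of $G$, which is read off directly. So the key step is to decide from $G$ alone whether $P$ is simple. I would first check whether $G\cong J(n,k)\square\cubegraph[m]$ with $2\le k\le n-2$. By Sabidussi--Vizing (\cref{thm:cartesian_product}), this is detectable and the factors $n,k,m$ are determined, up to the symmetry $J(n,k)\cong J(n,n-k)$ and the isolated coincidences of Johnson graphs such as $J(4,2)$, which must be handled. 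In that case \cref{lemma:hypersimplex_not_simple_slice} shows $P$ is not simple, hence $P$ has a non-simple vertex.

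\emph{Case 2: reducing to the hypersimplex case.} The step I expect to be the main obstacle is showing that a non-simple slice with a simple vertex cannot have a regular graph, and more generally that any slice whose graph is regular is either simple or a hypersimplex product. For a vertex lying on a cube edge the degree is $d-1$, while for a cube vertex the degree is $c+ab$ with $a,b\ge2$. I would show $c+ab\ne d-1$ whenever $a,b\ge 2$, using $ab\ge a+b$ for $a,b\ge2$, so $c+ab\ge d>d-1$. This gives a strict degree gap, so non-simple vertices are exactly the vertices of degree exceeding the minimum, and a regular $G$ forces either all vertices simple or no vertex simple. Hence:
\begin{itemize}
\item if $G$ is not regular, the minimum degree equals $d-1$;
\item if $G$ is regular and not a product $J(n,k)\square\cubegraph[m]$ with $2\le k\le n-2$, then $P$ is simple (by \cref{prop:hypersimplices} no-simple-vertex slices have that form), and $d-1$ is the degree;
\item if $G\cong J(n,k)\square\cubegraph[m]$, then by \cref{lemma:hypersimplex_not_simple_slice} $P$ is not simple, so $P$ has no simple vertex and is a hypersimplex product, whence $P\cong\Delta(n,k)\times\cube[m]$ and $d=n+m$.
\end{itemize}
For the last bullet, I must confirm that the decomposition of $G$ pins down $n$ and $m$. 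The prime factors of $J(n,k)$ with $2\le k\le n-2$ are not $K_2$ except in small cases, and $J(4,2)$ is the octahedron graph $K_{2,2,2}$, which is prime. So $m$ counts the $K_2$ factors, and $n$ is recovered from $J(n,k)$ via its vertex count $\binom nk$ and degree $k(n-k)$. A small consistency check via $J(n,k)$ having $n$ maximal cliques through each structure would suffice if ambiguity arises.
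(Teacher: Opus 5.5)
Your proposal is correct and follows the same route as the paper. After the reduction of \cref{prop:simple_vertices_through_edges}, a slice with no simple vertex is a hypersimplex times a cube by \cref{prop:hypersimplices}; otherwise $d-1$ is the minimum degree; and \cref{lemma:hypersimplex_not_simple_slice} settles the case $G\cong J(n,k)\square\cubegraph[m]$. You also make explicit two steps the paper leaves implicit: that regularity of $G$ forces ``all or no vertices simple'', and that unique prime factorization is needed to identify $(n,m)$ with $(d-c,c)$.

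Two pieces need cleanup. First, drop the opening sentence of your Case~1, since the fact that $P$ is not simple does not by itself show that $G$ is not the graph of some simple polytope. Second, replace the hedges in your last paragraph by two facts, so that no exceptional small cases arise:
\begin{itemize}
\item $J(n,k)$ is prime for every $2\le k\le n-2$: any two edges at a vertex are connected by a chain of triangles, and a triangle lies in a single Cartesian factor.
\item $\{k,n-k\}$, hence $n$, is determined by the local graph $K_k\square K_{n-k}$, or alternatively by the $n$ maximal cliques through each vertex.
\end{itemize}
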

\begin{proof}
    We examine the two possible cases.
    If $G$ is not isomorphic to the graph of the product of a hypersimplex $\Delta(n,k), 2 \leq k \leq n-2$, with a hypercube, then we know from \cref{prop:hypersimplices} that $P$ must have a vertex that is not a vertex of the cube, and thus a simple vertex by \cref{cor:simple_vertices}. Therefore, $d-1$ is the smallest vertex degree among all the vertices of $G$.
    
    Otherwise, $G$ is isomorphic to the graph of a product of a hypersimplex $\Delta(n,k), 2 \leq k \leq n-2$, with a cube. Then, by \cref{lemma:hypersimplex_not_simple_slice}, it cannot have any simple vertex. Because of \cref{prop:simple_vertices_through_edges}, then all the vertices of $P$ are cube vertices
    \[
    \verts(P)\subset \verts(\cube[d]).
    \]
    Therefore, by \cref{prop:hypersimplices} this implies that $P$ is isomorphic to the Cartesian product of the given hypersimplex $\Delta(n,k)$ with a cube.
\end{proof}

\section{Recognizing 2-dimensional faces from the graph}\label{sec:2-faces}
As a first step in order to determine the edge labeled vertex figures of our cube slices, we will need to detect from the graph which edges adjacent to a non-simple vertex form $2$-dimensional faces. A fundamental fact is the following bound on the sizes of $2$-faces, which implies that their edges must belong to an induced cycle of length three, four, or five. 

\begin{lemma}
Let $P=\cube\cap H\subset \R^d$ be a non-simple slice of the cube, and $v$ a non-simple vertex. Then, every $2$-face of $P$ containing $v$ is either a triangle, a quadrilateral, or a pentagon.
\end{lemma}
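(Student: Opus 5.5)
By \cref{lem:slicefacedim}, every $2$-face $F$ of $P$ is either a $2$-face of $\cube$ contained in $H$, or $F = G\cap H$ for a $3$-face $G$ of $\cube$ whose relative interior meets $H$. The plan is to treat these two cases separately, using that $v$ is non-simple. By \cref{cor:simple_vertices}, $v$ is then a vertex of $\cube$.

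\emph{First case.} If $F$ is a $2$-face of $\cube$, it is a square, so $F$ is a quadrilateral and there is nothing to prove.

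\emph{Second case.} Here $G$ is a $3$-cube, $F = G\cap H$ is a slice of $\cube[3]$, and $v$ is a vertex of $G$ lying on $H$. The number of edges of the polygon $F$ equals the number of facets of $G$ meeting $H$ in an edge of $F$. A $3$-cube has $6$ facets, so $F$ has at most $6$ edges. It remains to rule out the hexagon.

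Since $v\in H$ is a vertex of $G$, the three facets of $G$ containing $v$ meet $H$ at least in the point $v$. A polygon vertex lies on at most two of its edges, so at most two of these three facets can contribute edges of $F$. Hence at least one facet of $G$ incident to $v$ meets $F$ only in the point $v$, or lies in a single point or edge not counted as a side. This leaves at most $5$ facets of $G$ contributing edges, so $F$ has at most five sides.

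The point to check carefully is the degenerate possibility that $H$ contains an edge of $G$ through $v$. In that situation the two facets of $G$ containing that edge both meet $H$ in the same edge of $F$, so they contribute only one side between them. This again reduces the count to at most $5$.

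Alternatively, one can use \ref{assumption:through_origin} with $v=\zero$ and $G$ spanned by $e_i,e_j,e_k$. The hyperplane $H$ then passes through a vertex of this $3$-cube, and slices of $\cube[3]$ through a vertex have at most $5$ vertices. Indeed, a hexagonal slice of $\cube[3]$ meets six edges in their relative interiors and contains no cube vertex.

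The main obstacle is making the facet-counting argument rigorous in the degenerate situations. These are: $H$ containing an edge of $G$ at $v$, or $F$ passing through further vertices of $G$. I would handle these by listing the possible sign patterns of $(u_i,u_j,u_k)$ with $F$ two-dimensional, which gives at most three cases up to symmetry. In each case the polygon is a triangle, quadrilateral, or pentagon.

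Finally, note that the non-simplicity of $v$ is used only to guarantee that $v$ is a cube vertex. The argument above actually shows the stronger statement that every $2$-face of $P$ through any vertex of $P$ that is also a vertex of $\cube$ has at most five sides.
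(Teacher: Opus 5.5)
Your argument is correct, but your main route differs from the paper's.

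The paper observes that every $2$-face of $P$ is a section of a $3$-face of $\cube$. It then cites the classification of plane sections of $\cube[3]$ from \cite{BraMer25:6Cube}: only triangles, quadrilaterals and pentagons occur when the plane passes through a vertex of $\cube[3]$. This is exactly your ``alternative'' paragraph.

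Your facet count proves that fact directly. Once you know from \cref{cor:simple_vertices} that $v$ is a cube vertex, exactly two edges of the polygon $F=G\cap H$ pass through $v$. Every other edge of $F$ is $G'\cap H$ for a facet $G'$ of $G$ not containing $v$, and distinct edges give distinct facets. So $F$ has at most $2+3=5$ sides. What your route buys:
\begin{itemize}
\item a self-contained proof with no appeal to the external classification;
\item explicit treatment of a square of $\cube$ lying in $H$, a case the paper's one-line proof glosses over;
\item visibility that non-simplicity is used only to make $v$ a cube vertex, which gives your stronger statement.
\end{itemize}

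Two presentation fixes. First, the sentence ``at most two of these three facets can contribute edges of $F$'' is literally false when $H$ contains an edge of $G$ at $v$. In that case all three facets contribute, but only two distinct edges arise. Phrase the count in terms of distinct edges of $F$. With that phrasing the degenerate cases are already covered, so no sign-pattern enumeration is needed.

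Second, to make the count airtight, state why every edge of $F$ has the form $G'\cap H$ for a facet $G'$ of $G$. Either $H$ cuts a $2$-face of $G$ through its relative interior, or $H$ contains an edge of $G$. In the latter case that edge equals $G'\cap H$ for either facet $G'$ containing it. This holds because no facet of $G$ lies in $H$ when $H$ meets the relative interior of $G$.
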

\begin{proof}
Every $2$-face of $P$ arises as a slice of a $3$-face of $\cube$ with $H$. Slices of $\cube[3]$ can be $k$-gons with $3\leq k\leq 6$, but only triangle, quadrilateral, and pentagon sections can arise when the hyperplane is constrained to go through a vertex of $\cube[3]$ \cite{BraMer25:6Cube}.
\end{proof}
	
In the following, we analyze all possible $k$-gonal faces of $P$ for $k=3,4,5$.

\subsection{Triangles}
Triangular faces are the easiest to treat, and we prove that whenever there is a $3$-cycle in the graph of a cube slice, it corresponds to an actual $2$-face.
\begin{proposition}\label{prop:triangles}
    Let $P=\cube\cap H\subset \R^d$ be a cube slice with vertex-edge graph $G$. Then every $3$-cycle in $G$ is a the graph of a $2$-face of $P$.
\end{proposition}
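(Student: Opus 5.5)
Let $x,y,z$ be the vertices of a $3$-cycle in $G$, so $[x,y]$, $[y,z]$, $[x,z]$ are edges of $P$. The plan is to find a face $F$ of $\cube$ of dimension at most $3$ containing all three points. Then $F\cap H$ is a face of $P$ of dimension at most $2$ containing the triangle, and we show it is exactly that triangle.

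By \cref{lem:slicefacedim}, each edge of $P$ is either a cube edge lying in $H$ or $H\cap F_{ij}$ for a $2$-face $F_{ij}$ of $\cube$. Each vertex of $P$ is either a cube vertex or lies in the relative interior of a cube edge. Record for each vertex its minimal cube face: a point $\{0,1\}$-vector with some coordinates free. For a vertex in the relative interior of a cube edge, one coordinate is fractional.

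Adjacency in $P$ forces the minimal faces of the two endpoints to lie in a common face of dimension at most $2$. Concretely, their fixed $\{0,1\}$ coordinates agree outside a set of at most two indices. We then do a short case analysis on how many of $x,y,z$ are cube vertices.

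\begin{itemize}
\item \textbf{Three cube vertices.} Pairwise Hamming distances are at most $2$, and an edge of $P$ between cube vertices at distance $2$ must be a diagonal of $F_{ij}$. Three binary vectors with pairwise distances in $\{1,2\}$ lying on edges of $P$ must differ only within a set of at most $3$ coordinates. Distance $1$ between all three pairs is impossible by parity, so the set has size $3$ and the triangle lies in a $3$-face of $\cube$.
\item \textbf{Some vertex in an edge interior.} Let that vertex lie in the relative interior of a cube edge in direction $i$. Its neighbours along $P$-edges lie in $2$-faces containing that cube edge, so they differ from it only in coordinate $i$ and one other coordinate. Running this over the three vertices shows they all lie in a face spanned by at most $3$ coordinate directions.
\end{itemize}

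The main obstacle is this bookkeeping, especially excluding configurations that would need $4$ directions. For such a configuration we would argue that the two endpoints of one $P$-edge would differ in more than $2$ fixed coordinates, contradicting that they share a $2$-face of $\cube$.

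Once $x,y,z\in F$ with $\dim F\le 3$, the set $F\cap H$ is a face of $P$ of dimension at most $2$. It is a polygon when $F\not\subset H$, since $\dim F=3$ is forced in the genuine case; if $F$ has dimension at most $2$, the triangle is $F\cap H$ or $F$ itself directly.

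The three edges $[x,y]$, $[y,z]$, $[x,z]$ of $P$ are contained in the polygon $F\cap H$. Edges of $P$ lying in a face are edges of that face. A polygon whose three vertices are pairwise joined by edges must be a triangle, because an edge between non-consecutive vertices of a $k$-gon with $k>3$ would be a diagonal, not an edge. Hence $F\cap H=\conv(x,y,z)$ is a $2$-face of $P$ whose graph is the given $3$-cycle.
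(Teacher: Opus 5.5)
Your proposal is correct and follows essentially the paper's route: place the three vertices in a cube face of dimension at most $3$ by coordinate/support bookkeeping, then intersect that face with $H$. The paper normalizes the minimal-support vertex to $\zero$ or to the edge $[\zero,e_i]$ rather than splitting into your two cases, and your explicit ``three pairwise adjacent vertices of a polygon force a triangle'' argument makes precise a step the paper leaves implicit. The only slip is in your first case: the distance pattern $(1,1,2)$ gives a coordinate set of size $2$, not $3$, but it cannot occur (two cube edges in $H$ sharing a vertex put their whole square in $H$, so its diagonal is not an edge of $P$), so nothing is lost.
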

\begin{proof}
    Let $u,v,w$ be the vertices of the slice that form a $3$-cycle in $G$. Up to symmetry, we can assume that the support of $u$ is minimal. Namely, $u=\zero$ if it is a vertex of $\cube$, or $u$ lies on the relative interior of an edge incident to $\zero$, in which case the support of $u$ is a singleton. Thus, 
    \[
    \supp(u) \in \{\emptyset, \{i\}\} \quad \text{ for some }i \in [d]. 
    \]
    Since $[u,v]$ is an edge of $P$, it is the intersection of $H$ with a face of $\cube$ of dimension at most $2$. This $2$-face contains $u$, and thus also the minimal face of $\cube$ containing $u$. All such $2$-faces of $\cube$ are of the form $F_{i,j} = \conv(\zero,e_i,e_j,e_i + e_j)$. Therefore, the support of $v$ satisfies $\supp(v) \subseteq \{i,j\}$.
    By a similar argument, since $[u,w]$ forms an edge, $\supp(w) \subseteq \{k,l\}$ for some $k,l \in [d]$. 
    
    Let $Q$ be the face of $\cube$ in which precisely the coordinates $\supp(v)\cup \supp(w) \subseteq \{i,j,k,l\}$ are allowed to vary and all other coordinates are fixed equal to $0$. Then, $Q$ is the minimal face of $\cube$ containing all $3$ vertices $u,v,w$ of the cycle.
    Since $[v,w]$ is an edge of $P$, there exists some $2$-face $F$ of $Q$ such that $[v,w] = F\cap H$. This implies that $v$ and $w$ can differ in at most two coordinates, i.e., the symmetric difference of $\supp(v)$ and $\supp(w)$ has $|\supp(v)\symdiff \supp(w)|\leq 2$. This can only happen if $|\supp(v)\cup \supp(w)|\leq 3$, and hence we have $\dim(Q) \leq 3$.
    
    Because $u,v,w$ form a cycle, they are not collinear. Hence, $\dim(Q) = 3$ and $\dim(Q \cap H) = 2$.
    Since every intersection of $H$ with a face of $\cube$ defines a face of~$P$, the intersection $Q \cap H$ defines a $2$-dimensional face of~$P$. It contains all three vertices $u,v,w$, as well as the edges $[u,v], [v,w], [u,w]$ of $P$. Hence, $Q \cap H$ is the triangular $2$-face of $P$ with vertices $u,v,w$. The claim follows.
\end{proof}

\subsection{Quadrilaterals}
We now analyze $4$-cycles incident to at least one non-simple vertex. As seen already for the octahedron $\Delta(4,2)$, there can be induced $4$-cycles (such as those formed by any of the pairs of parallel cube edges in \cref{fig:slice_slab}) in the graph of a slice $P$ that do not define $2$-faces of $P$. Recall that for any $I\subset [d]$, $e_I$ is the vertex of $\cube$ with support equal to $I$.

\begin{proposition}\label{prop:4cyclesimple}
    Let $P=\cube\cap H$ be a non-simple cube slice with vertex-edge graph $G$. Let
    \[
    \mathfrak{C}=(v_0,v_1,v_2,v_3,v_0)
    \]
    be an induced $4$-cycle in $G$ with $v_0$ a non-simple vertex. 
    If at least one among $v_1, v_2, v_3$ is simple, then $\mathfrak{C}$ is the graph of a quadrilateral $2$-face of $P$.
\end{proposition}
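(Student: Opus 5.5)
Following the proof of \cref{prop:triangles}, we work with supports and locate the whole cycle inside a single $3$-face of the cube. By \cref{cor:simple_vertices} and \cref{prop:simple_vertices_through_edges}, the non-simple vertex $v_0$ is a vertex of $\cube$, and we may assume \ref{assumption:through_origin}, so $v_0=\zero$ and $H=u^\perp$. The simple vertices of $P$ lie in the relative interiors of cube edges, so each has support of the form $S\cup\{i\}$ with fractional $i$-th coordinate. Every edge of $P$ is $F\cap H$ for a face $F$ of $\cube$ of dimension at most $2$. Hence adjacent vertices of $P$ have ``cube supports'' (the minimal cube faces containing them) that differ in at most two coordinates. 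The neighbours $v_1,v_3$ of $\zero$ therefore have $\supp(v_1)\subseteq\{i,j\}$ and $\supp(v_3)\subseteq\{k,l\}$, with $v_1=e_{ij}$ or $v_1=\lambda e_i$ or $v_1=e_i$ (the last only when $u_i=0$), and similarly for $v_3$.

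The plan is to show that the minimal face $Q$ of $\cube$ containing $\zero,v_1,v_2,v_3$ has dimension exactly $3$. Then $Q\cap H$ is a $2$-face of $P$ containing the four edges of $\mathfrak C$. Since $\mathfrak C$ is induced and a polygon face whose graph contains a $4$-cycle must be exactly that quadrilateral, $Q\cap H$ is the desired face. The vertex $v_2$ is adjacent to both $v_1$ and $v_3$, so its support differs from each of theirs by at most two coordinates. The case analysis then runs over which of $v_1,v_2,v_3$ is simple, using the symmetry $v_1\leftrightarrow v_3$.

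In the first case, $v_1$ (or $v_3$) is simple. Then $v_1=\lambda e_i$ lies on an edge $[\zero,e_i]$, so $[\zero,v_1]\subset[\zero,e_i]$ would be contained in a cube edge. This is impossible, since then $\zero$ and $v_1$ would both lie in $H\cap[\zero,e_i]$ and the edge would lie in $H$, contradicting that $v_1$ is interior to it. So a simple neighbour of $\zero$ is instead of the form $v_1\in\operatorname{relint}[e_j,e_{ij}]$, lying on $F_{i,j}$ with $u_iu_j<0$, and $\supp(v_1)=\{i,j\}$. In the second case, $v_2$ is simple. Then $v_2$ lies on an edge of the cube face determined by $v_1$ and $v_3$, and its fractional coordinate must be shared with the $2$-faces realizing $[v_1,v_2]$ and $[v_2,v_3]$. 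In both cases the aim is the bound $|\supp(v_1)\cup\supp(v_2)\cup\supp(v_3)|\le 3$, where for a simple vertex we count its full cube-edge support. This gives $\dim Q\le 3$. Equality $\dim Q=3$ then follows because the four vertices are not coplanar with a single $2$-face of the cube: otherwise the cycle would be a segment or contain a chord.

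We expect the main obstacle to be the support bound. When all of $v_1,v_2,v_3$ are cube vertices, the octahedron shows that $|\bigcup\supp|=4$ genuinely occurs, giving a non-face $4$-cycle. So we must use the simple vertex to rule out four coordinates. The first approach to try is as follows. Suppose the union has four coordinates $\{i,j,k,l\}$. Then $v_2$ must be $e_{ijkl}$-like, or must lie on an edge whose fractional coordinate is one of $i,j,k,l$, and the two $2$-faces carrying $[v_1,v_2]$ and $[v_2,v_3]$ are then forced to contain that same cube edge. Writing $H$ restricted to the $4$-face on $\{i,j,k,l\}$ and using the sign constraints $u_iu_j<0$, $u_ku_l<0$, we would show that this forces either a chord $[v_1,v_3]$ or $[\zero,v_2]$ (contradicting inducedness), or all four vertices to be cube vertices (contradicting simplicity). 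This is a finite sign check on the $4$-cube, and could if necessary be verified against the classification of slices of $\cube[4]$ in \cite{BraMer25:6Cube}.
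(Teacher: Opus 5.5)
Your framework is the right one and agrees with the paper's. You put $v_0=\zero$, place the simple vertices in relative interiors of cube edges, show that $\zero,v_1,v_2,v_3$ lie in a common $3$-face $Q$ of $\cube$, and read off the quadrilateral $Q\cap H$. However, the step that carries all of the content, the bound $|\supp(v_1)\cup\supp(v_2)\cup\supp(v_3)|\le 3$, is never established:
\begin{itemize}
\item In the case $v_1$ simple, you stop after obtaining $\supp(v_1)=\{i,j\}$.
\item In the case $v_2$ simple, the claim that $v_2$ lies on an edge of ``the cube face determined by $v_1$ and $v_3$'' is unjustified, and is essentially the conclusion itself.
\item The last paragraph only announces that a four-coordinate configuration \emph{would} force a chord or only cube vertices. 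It defers to a sign check on a $4$-face that is not carried out, or to the classification of slices of $\cube[4]$. Even the preliminary claim that at most four coordinates occur is not argued.
\end{itemize}
As it stands this is a plan rather than a proof.

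The missing idea is elementary and needs no sign information on $u$. If $x$ is a simple vertex in the relative interior of a cube edge $E(x)$, then every square $F$ of $\cube$ with $F\cap H$ an edge of $P$ through $x$ contains $E(x)$, because any face containing $x$ contains its minimal face.

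For the case $v_1$ simple, use your labelling, with $v_1$ in the relative interior of $\conv(e_j,e_{ij})$. The observation pins the square carrying $[v_1,v_2]$ down to $\conv(e_j,e_{ij},e_{jk},e_{ijk})$ for some $k$. Inducedness (no chord $[\zero,v_2]$) excludes $v_2$ from the edge $\conv(e_j,e_{jk})$, which lies in the square $F_{j,k}$ through $\zero$, so $\supp(v_2)=\{i,j,k\}$. Then $|\supp(v_3)|\le 2$ together with $|\supp(v_2)\symdiff\supp(v_3)|\le 2$ forces $\supp(v_3)\subset\{i,j,k\}$.

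If only $v_2$ is simple, write $v_2$ in the relative interior of $\conv(e_J,e_{J\cup\{i\}})$. Non-adjacency to $\zero$ gives $|J|\ge 2$, and the symmetric-difference bounds give $|J|\le 3$. If $|J|=3$, both squares through $v_2$ carrying $[v_1,v_2]$ and $[v_2,v_3]$ contain this edge, which forces $\supp(v_1),\supp(v_3)\subset J$. Hence the cube vertices $v_1,v_3$ lie in a common square and $[v_1,v_3]$ is a chord, a contradiction.

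You should also record that $Q\not\subseteq H$, since $Q$ contains a vertex of $P$ interior to a cube edge. This is what makes $Q\cap H$ two-dimensional rather than all of $Q$.
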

\begin{proof}
    Without loss of generality, see \ref{assumption:through_origin}, assume that the non-simple vertex is the origin. 
    Thus $v_0 = \zero$ and $\mathfrak{C}=(\zero,v_1,v_2,v_3,\zero)$.
    Moreover we can assume, by \cref{prop:simple_vertices_through_edges}, that every simple vertex of $P$ is obtained as the intersection of $H$ with the relative interior of an edge of $\cube$.
    
    Suppose first that $v_1$ is simple. Since $v_1$ is adjacent to $\zero$ and is not a vertex of $\cube$, we have that $v_1$ lies in the interior of the cube-edge $E_1 = \conv(e_i,e_{ij})$ for some $i,j \in [d]$, and $|\supp(v_1)|=2$. Next, we argue that $|\supp(v_2)| = 3$.
    Since $[v_1,v_2]$ is an edge of $P$ and $v_1$ is not a vertex of $\cube$, we have that $[v_1,v_2]$ is the intersection of $H$ with the relative interior of a $2$-dimensional cube-face $F_{12}$. Note that $F_{12}$ contains $E_1$. Hence, $F_{12} = \conv(e_i, e_{ij}, e_{ik}, e_{ijk})$ for some $k \in [d]\setminus\{i,j\}$. The point $v_2$ either lies on a vertex of $F_{12}$ different from $e_i$ and $e_{ij}$, or it lies in the interior of an edge of $F_{12}$ different from $E_1$. As $\mathfrak{C}$ is induced, there is no edge of $P$ between $v_2$ and $\zero$. Hence, there is no $2$-dimensional cube-face containing both $v_2$ and $\zero$.
    This implies that $v_2$ is not contained in the edge $\conv(e_i, e_{ik})$.
    Thus, 
    \[
    \supp(v_2) = \{i,j,k\}.
    \]
    Now $v_3$ is adjacent to $\zero$, so $|\supp(v_3)|\leq 2$. Since $[v_2,v_3]$ is an edge of $P$, there exists some 2-dimensional cube face containing $v_2$ and $v_3$. This implies that $v_2$ and $v_3$ differ in at most $2$ coordinates, i.e., 
    \[
    |\supp(v_2) \symdiff \supp(v_3)| \leq 2.
    \]
    This can only happen if $\supp(v_3) \subset \supp(v_2)$.
    Thus the four vertices of $\mathfrak{C}$ are contained in the $3$-dimensional face $Q$ of $\cube$ spanned by the coordinates $i,j,k$. Since the cycle contains a simple vertex, $H$ does not contain $Q$. Moreover, the vertices of the cycle are not collinear. It follows that the $2$-dimensional face $Q \cap H$ of $P$ contains the vertices $\zero,v_1,v_2,v_3$ of $P$, as well as the edges $[\zero, v_1], [v_1,v_2], [v_2,v_3], [v_3,\zero]$ of $P$. Hence, $Q \cap H$ is the quadrilateral $2$-face of $P$ with vertices $\zero,v_1,v_2,v_3$.
    
    The case where $v_3$ is simple is symmetric to the case where $v_1$ is simple.
    It thus remains to consider the case where $v_2$ is simple and both $v_1$ and $v_3$ are non-simple. 
    Since $[\zero, v_1], [\zero, v_3]$ are edges of $P$, we have that $|\supp(v_1)| \leq 2$ and $|\supp(v_3)| \leq 2$. 
    Recall that, since $v_2$ is simple in $P$, it lies on the relative interior of a cube edge, by assumption.
    Denote by $J = \set{j \in [d]}{(v_2)_j = 1}$. Then,
    \[
    \supp(v_2)=J\cup\{i\},
    \]
    where $i \in [d]\setminus J$ is the unique coordinate for which $0<(v_2)_i<1$, so that $v_2 \in E_2 = \conv(e_J, e_{J \cup \{i\}})$.
    Since $v_2$ is not adjacent to $\zero$, we have $|J|\geq 2$. 
    On the other hand, the edges $[v_1,v_2]$ and $[v_2,v_3]$ are intersections of $H$ with $2$-dimensional cube faces. Hence, 
    \[
    |\supp(v_1) \symdiff \supp(v_2)| \leq 2 
    \quad \text{and} \quad |\supp(v_2) \symdiff \supp(v_3)| \leq 2.
    \]
    Each of these inequalities implies that $\supp(v_2)$ can have at most $2$ elements outside of $\supp(v_1)$ and of $\supp(v_3)$. Hence, $|J\cup\{i\}|\leq 4$, so $|J|\leq 3$.    
    Assume for contradiction that $|J|= 3$. Then for both $k=1,3$, the condition  $|\supp(v_k)|\leq 2$ together with $|\supp(v_k) \symdiff \supp(v_2)| \leq 2$ implies $\supp(v_k) \subset J\cup\{i\}$. Hence, $v_k$ and $v_2$ lie in a common $2$-dimension face $F_{2k}$ of $\cube$ in which the free coordinates are $B_k = (J \cup \{i\}) \setminus \supp(v_k)$. Note that $F_{2k}$ contains the edge $E_2$, and hence $i \in B_k$. This yields $i \not \in \supp(v_k)$, and hence $\supp(v_k) \subset J$. Thus, $|\supp(v_1) \symdiff \supp(v_3)| \leq 2$, so $v_1,v_3$ are contained in a common $2$-dimensional cube-face $F_{13}$. However, in this case $F_{13} \cap H$ defines an edge of $P$, thus yielding a contradiction to the assumption that $\mathfrak{C}$ is an induced cycle of the graph of $P$. Hence $|J|=2$. 
    
    Thus the four vertices of $\mathfrak{C}$ are contained in the $3$-dimensional face $Q$ of $\cube$ spanned by the coordinates $J\cup\{i\}$. Similarly to above, it follows that the $2$-dimensional face $Q \cap H$ is the quadrilateral $2$-face of $P$ with vertices $\zero,v_1,v_2,v_3$.		
\end{proof}
	
The hardest case is when the cycle consists of only non-simple vertices. If the slice is irreducible though, such a cycle can never form a $2$-face.
\begin{proposition}\label{prop:4cycles_nonsimple_irred}
    Let $P=\cube\cap H\subset \R^d$ be an irreducible slice. Let $G$ be the vertex-edge graph of $P$. Let
    \[
    \mathfrak{C}=(v_0,v_1,v_2,v_3,v_0)
    \]
    be an induced $4$-cycle in $G$ whose vertices are all non-simple. Then $\mathfrak{C}$ is not the graph of a quadrilateral $2$-face of $P$.
\end{proposition}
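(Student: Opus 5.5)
The plan is to show that any quadrilateral $2$-face of $P$ whose four vertices are all non-simple forces some coordinate of the normal vector $u$ to vanish, which contradicts irreducibility.

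First, I would place all four vertices on the cube. By \cref{prop:simple_vertices_through_edges} we may assume that every simple vertex of $P$ is the intersection of $H$ with the relative interior of a cube edge. Conversely, by \cref{cor:simple_vertices}, every vertex of $P$ lying in the relative interior of a cube edge is simple. Hence the non-simple vertices $v_0,\dots,v_3$ are all vertices of $\cube$. By \ref{assumption:through_origin}, we may take $v_0=\zero$ and $H=u^\perp$. Since $\zero$ is non-simple and $P$ is irreducible, \cref{prop:u-iff-irreducible} gives $u_i\neq 0$ for all $i\in[d]$. Equivalently, no edge of $\cube$ incident to $\zero$ lies in $H$. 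The same holds for every cube edge: a cube edge in direction $e_i$ lies in $H$ exactly when $u_i=0$.

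Next, suppose for contradiction that $\mathfrak C$ is the graph of a $2$-face $F$ of $P$. By \cref{lem:slicefacedim}, either $F$ is a $2$-face of $\cube$ contained in $H$, or $F=Q\cap H$ for a $3$-face $Q$ of $\cube$ that $H$ cuts through its relative interior.
\begin{enumerate}
\item In the first case, the edges of the square $F$ are cube edges lying in $H$. This would force $u_i=0$ for two coordinates, which is a contradiction.
\item In the second case, the vertices of $F$ are exactly the vertices of $P$ in $Q$, and these are the four cube vertices $v_0,\dots,v_3$. So $H\cap Q$ is a plane through four vertices of the $3$-cube $Q$.
\end{enumerate}

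The key step is then a small classification: which sets of four vertices of $\cube[3]$ are coplanar? Only two kinds occur. Either they form a facet of $\cube[3]$, or they form a ``diagonal rectangle'' $\conv(p,p+e_k,q,q+e_k)$, where $p,q$ are opposite vertices of a square facet transverse to $e_k$ (for example the plane $x_1=x_2$). The other candidates fail: four vertices with no two sharing an edge form a regular tetrahedron, and the remaining configurations are checked directly, or by noting that a plane through four cube vertices must be spanned by two parallel cube edges or cube face diagonals. In every coplanar case the quadrilateral contains two parallel cube edges in some direction $e_k$. These edges lie in $H$, so $u_k=0$, which again is a contradiction. Note also that a facet of $Q$ lying in $H$ would contradict the assumption that $H$ meets the relative interior of $Q$.

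I expect the main obstacle to be making this classification of coplanar vertex quadruples of $\cube[3]$ clean and complete. I would argue it as follows. Write the plane as $\sum_{i\in\{i,j,k\}} u_i x_i = t$ with all three coefficients nonzero, and count its cube-vertex solutions. Since all $u_i\neq 0$, two vertices differing in a single coordinate cannot both lie on the plane. Hence the vertices on the plane form an independent set in $\cubegraph[3]$, whose size is at most $4$. Equality holds only for a parity class $\{x:\sum x_i\equiv 0\}$ or $\{x:\sum x_i\equiv 1\}$, and such a class is a tetrahedron, hence not coplanar. Therefore the plane contains at most three cube vertices, so $Q\cap H$ cannot have four cube vertices. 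This contradiction completes the proof.
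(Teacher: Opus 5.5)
\textbf{Verdict.} Your proof is correct. It reaches the contradiction by a different final step than the paper.

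\textbf{Comparison with the paper.} Both arguments reduce to $u_i\neq 0$ for all $i$ and to $F=Q\cap H$ for a $3$-face $Q$ of $\cube$.

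The paper then looks at the two cycle-neighbors $v_1,v_3$ of $v_0=\zero$. Since no cube edge lies in $H$, each has the form $e_{ij}$, with a two-element support inside the three free coordinates of $Q$. So the two supports meet, and $v_1,v_3$ lie on a common square of $\cube$. $H$ cuts that square in an edge $[v_1,v_3]$ of $P$, contradicting that $\mathfrak{C}$ is induced.

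You instead prove a counting fact. A plane in the affine hull of $Q$ with all three coefficients nonzero meets $\verts(Q)$ in an independent set of $\cubegraph[3]$. Such a set has at most four points, with equality only for a parity class, which is a tetrahedron. Hence $Q\cap H$ contains at most three cube vertices.

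\textbf{What each approach buys.} Your route gives a slightly stronger statement: in an irreducible slice no $2$-face has four cube vertices at all. It uses neither the adjacency pattern nor the inducedness of $\mathfrak{C}$. It also treats explicitly the case of a square of $\cube$ lying in $H$, which the paper passes over silently. The paper's route stays within the support-chasing technique used for the other cycle propositions in that section.

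\textbf{Minor remarks.}
\begin{enumerate}
\item Your appeal to \cref{prop:simple_vertices_through_edges} is unnecessary. Every vertex of $P$ is either a cube vertex or lies in the relative interior of a cube edge, and \cref{cor:simple_vertices} already makes the latter simple.
\item The classification of coplanar vertex quadruples in your third paragraph is superseded by the independent-set argument, so you can drop it.
\item A cube edge in direction $e_i$ lying in $H$ forces $u_i=0$, but not conversely. Only that direction is needed, so ``exactly when'' should be weakened.
\end{enumerate}
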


\begin{proof}
    Without loss of generality, see \ref{assumption:through_origin}, assume that $v_0 = \zero$, thus $\mathfrak{C}=(\zero,v_1,v_2,v_3,\zero)$ and the hyperplane is $H=u^\perp$. The slice is irreducible so, by \cref{prop:u-iff-irreducible}, $u_i\neq 0$ for all $i \in [d]$ and the hyperplane $H$ contains no edge of the cube.
    Suppose, for contradiction, that $\mathfrak{C}$ is the graph of a quadrilateral $2$-face $F$ of $P$. Then there is a $3$-dimensional face $Q$ of $\cube$ such that $F=Q\cap H$.
    After relabeling coordinates, we may assume that $Q$ is spanned by $e_1,e_2,e_3$. Since $v_1$ and $v_3$ are adjacent to the origin in $P$ and are cube vertices, their supports must have size two (note that $H$ contains no edge, so they cannot have size one). Thus,
    \[
    \supp(v_1),\supp(v_3) \subseteq \{1,2,3\} \quad \text{and} \quad |\supp(v_1)| = |\supp(v_3)| = 2.
    \]
    Therefore, these two supports have a common coordinate, hence $v_1$ and $v_3$ are contained in a common $2$-dimensional cube-face $F$. Since $H$ does not contain any edge of $\cube$, it follows that $F\cap H$ is a $1$-dimensional face of $P$ with vertices $v_1$ and $v_3$. 
    This contradicts the assumption that $\mathfrak{C}$ is an induced $4$-cycle. Hence $\mathfrak{C}$ is not the graph of a quadrilateral $2$-face of $P$.
\end{proof}

This result requires irreducibility. It gives us a complete understanding of which $4$-cycles form $2$-faces of a cube slice, provided that we know its rank. In \cref{sec:decomposition} we will show how to recover the rank. Note that by \cref{thm:cartesian_product}, graphs admit a unique decomposition as a Cartesian product of prime graphs (up to isomorphism of the factors). 
Therefore, if $P=Q\times \cube[{d-r}]$ is a reducible slice of rank~$r$, we can recover the vertex-edge graph of $Q$ from the vertex-edge graph $G$ of $P$ by just removing $d-r$ single-edge factors from this cartesian product. Because of this decomposition, we can identify $2^{d-r}$ copies of the vertex-edge graph of $Q$ inside of $G$.
    
In the following more detailed statement, we classify all possible occurrences of non-simple $4$-cycles in (not necessarily irreducible) cube slices, and determine when they define $2$-faces. This will help us later in \cref{sec:decomposition} in finding the correct rank of a slice.
\begin{proposition}\label{prop:reducible_quadrilaterals}
    Let $P=\cube\cap H$ be a reducible slice of rank $r$ with decomposition $P\cong Q\times \cube[{d-r}]$. Let $G$ be the vertex-edge graph of $P$ and let
    \[
    \mathfrak{C}=(v_0,v_1,v_2,v_3,v_0)
    \]
    be an induced $4$-cycle in $G$ whose vertices are all non-simple. Then $\mathfrak{C}$ necessarily satisfies one of the following:
    \begin{itemize}
        \item[(i)] If $\mathfrak{C}$ is contained in a subgraph of $G$ that corresponds to a copy of the vertex-edge graph of $Q$, then $\mathfrak{C}$ is not the graph of a quadrilateral $2$-face of $P$;
        \item[(ii)] If $d-r\geq 2$ and $\mathfrak{C}$ is contained in a subgraph of $G$ that corresponds to a copy of $\cube[{d-r}]$, then $\mathfrak{C}$ is the graph of a quadrilateral $2$-face of $P$;
        \item[(iii)] If the vertices $\{v_0,v_1\}$ and $\{v_2,v_3\}$ are contained in two different copies of $Q$, then $\mathfrak{C}$ is the graph of a quadrilateral $2$-face of $P$.
    \end{itemize}
\end{proposition}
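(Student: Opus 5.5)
The plan is to work entirely through the combinatorial product structure $P\cong Q\times\cube[{d-r}]$. I will fix a face-lattice isomorphism and identify every vertex of $P$ with a pair $(q,w)$, where $q\in\verts(Q)$ and $w\in\{0,1\}^{d-r}$. Three standard facts about products will be used throughout:
\begin{itemize}
\item the nonempty faces of $P$ are exactly the products $F\times K$, with $F$ a face of $Q$ and $K$ a face of $\cube[{d-r}]$;
\item $G=G(Q)\square\cubegraph[{d-r}]$;
\item degrees add, so $\deg_P(q,w)=\deg_Q(q)+(d-r)$.
\end{itemize}
Since $\dim P=\dim Q+(d-r)$, the last fact shows that $(q,w)$ is non-simple in $P$ if and only if $q$ is non-simple in $Q$. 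I will also note that $Q$ is an irreducible slice of $\cube[r]$. Indeed, if $Q\cong Q'\times\cube[k]$ with $k\geq 1$ and $Q'$ a slice of $\cube[{r-k}]$, then $P\cong Q'\times\cube[{d-r+k}]$, which contradicts the minimality of $r$ in the definition of rank.

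For case (i), suppose all $v_i=(q_i,w)$ lie in the copy $G(Q)\times\{w\}$. This copy is an induced subgraph of $G$, so $(q_0,q_1,q_2,q_3,q_0)$ is an induced $4$-cycle in $G(Q)$, and by the degree remark all the $q_i$ are non-simple in $Q$. Suppose $\mathfrak C$ were the graph of a $2$-face $F\times K$ of $P$. Since all its vertices share the cube coordinate $w$, we get $K=\{w\}$, so $F$ would be a quadrilateral $2$-face of $Q$ with graph $(q_0,\dots,q_3)$. This contradicts \cref{prop:4cycles_nonsimple_irred} applied to the irreducible slice $Q$.

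For case (ii), the vertices are $(q,w_i)$ with $(w_0,\dots,w_3)$ an induced $4$-cycle in $\cubegraph[{d-r}]$. Such a cycle must have the form $w,\ w+e_i,\ w+e_i+e_j,\ w+e_j$, possibly with some signs flipped. The reason is that two vertices at Hamming distance $2$ have exactly two common neighbours, and the cycle cannot backtrack. Hence the cycle is the boundary of a square face $K$ of the cube, and $\{q\}\times K$ is the required quadrilateral $2$-face of $P$.

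For case (iii), write $v_0=(q_0,w)$, $v_1=(q_1,w)$, $v_2=(q_2,w')$ and $v_3=(q_3,w')$ with $w\neq w'$. The edges $[v_1,v_2]$ and $[v_3,v_0]$ join different copies of $Q$, so in the Cartesian product they must be cube-direction edges. This forces $q_2=q_1$, $q_3=q_0$ and $w'=w\pm e_k$ for some $k$. Moreover $[v_0,v_1]$ is an edge inside a copy, so $[q_0,q_1]$ is an edge of $Q$. Then $[q_0,q_1]\times[w,w']$ is a $2$-face of $P$ whose graph is exactly $\mathfrak C$.

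The only delicate point I expect is the bookkeeping in (iii) and (i). One has to argue purely from the Cartesian-product graph structure that cross-copy edges preserve the $Q$-coordinate, and that a face whose vertices all share one cube coordinate has trivial cube factor. Both follow directly from the description of faces of products as $F\times K$.
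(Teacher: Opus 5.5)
You prove the three implications correctly, and your route is the paper's. You work in the fibres of the product graph, reduce (i) to \cref{prop:4cycles_nonsimple_irred}, identify the cycle in (ii) with a square of the cube factor, and exhibit the product face $[q_0,q_1]\times[w,w']$ in (iii). Your check that $Q$ is itself irreducible, which the paper leaves implicit, is a welcome addition.

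What is missing is the other half of the statement. The phrase ``$\mathfrak{C}$ necessarily satisfies one of the following'' asserts that every induced $4$-cycle whose vertices are all non-simple falls into case (i), (ii) or (iii). Your proposal never addresses this. The paper devotes the first part of its proof to exactly this exhaustiveness. It is what lets the three implications become the complete list of quadrilateral faces in \cref{prop:2-faces_red}, and it underlies the count $\mathcal{C}(v,1)$ in \cref{prop:rank_from_graph}.

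The fix is short in your notation $v_i=(q_i,w_i)$. An edge of $G$ joining two different copies of $G(Q)$ is a cube-direction edge, so it preserves the $Q$-coordinate. Two distinct vertices in the same copy have distinct $Q$-coordinates. Sort the cycle by how its vertices are distributed among copies:
\begin{itemize}
\item If all $w_i$ coincide, you are in (i).
\item If the $w_i$ are pairwise distinct, all four edges are cube-direction edges and the $q_i$ coincide. You are then in (ii), and $d-r\geq 2$ holds automatically.
\item If the cycle uses three copies, or two copies split $3+1$, or two copies split as $\{v_0,v_2\},\{v_1,v_3\}$, some pair of distinct vertices in a common copy is joined by a path of cross-copy edges of $\mathfrak{C}$. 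That pair would then have equal $Q$-coordinates and coincide, which is a contradiction.
\item The only remaining case is two copies, each containing a pair of adjacent vertices. This is (iii) after cyclically relabelling the cycle.
\end{itemize}
The paper's own case analysis does not mention the $3+1$ split, so write it out explicitly.
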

	
\begin{proof}
    First, we observe that the cases listed in the statement cover all possible distributions of the vertices of $\mathfrak{C}$ across copies of $Q$. 
    If all of the vertices lie in one copy of $Q$, then (i) applies.
    If they each lie in a different copy of $Q$, then (ii) applies.
    If the vertices lie in three different copies of $Q$, then $\mathfrak{C}$ cannot be an induced cycle.
    Indeed, if $v_0,v_1$ lie in one copy and that $v_2$ and $v_3$ lie in two further copies, we write $v_0=(q_0,c_0)$ and $v_1=(q_1,c_0)$ for the vertices $q_i$ of $Q$ and $c_0$ of $\cube[{d-r}]$. Since the remaining three edges of $\mathfrak C$ join distinct copies of $Q$, we have that
    \[
    v_2=(q_1,c_2), \qquad v_3=(q_1,c_3)=(q_0,c_3),
    \]
    for some $c_2, c_3 \in \verts(\cube[{d-r}])$.
    Hence $q_0=q_1$, and therefore $v_0=v_1$, a contradiction to the assumption that $\mathfrak{C}$ is an induced $4$-cycle.
    On the other hand, if $v_0,v_2$ lie in the same copy, then $v_0 = (q_0,c_0)$, $v_1 = (q_0,c_1)$ and hence $v_2 = (q_0,c_0) = v_0$, and we derive a similar contradiction. Hence, $v_0,v_1,v_2,v_3$ cannot lie in three different copies of $Q$.
    Finally, condition (iii) is one configuration in which the vertices of $\mathfrak{C}$ lie in $2$ copies of $Q$. The only other partition (up to relabeling) gives $\{v_0,v_2\}$ and $\{v_1,v_3\}$ contained in two different copies of $Q$, which is impossible  since this would imply $v_0 = v_2, v_1 = v_3$ similarly as before. 
    
    Now we proceed in deciding which of these admissible cycles define faces of $P$. Without loss of generality, see \ref{assumption:through_origin}, $v_0=\zero$. Then,
    \begin{itemize}
        \item [(i)] In this case, the claim follows from \cref{prop:4cycles_nonsimple_irred}.
        \item[(ii)] If $\mathfrak{C}$ is contained in one copy of $\cube[d-r]$, then it corresponds to a $2$-face of $\cube[d-r]$ and hence of $P$.
        \item[(iii)] In this case, $[\zero, v_3]$ and $[v_1,v_2]$ correspond to edges of $\cube[d-r]$,
        so $v_0 = \zero = (q_0,c_0)$, $v_1 = (q_1,c_0), v_2 = (q_1,c_1), v_3 = (q_0,c_1)$. Thus, $\mathfrak{C}=[q_0,q_1]\square[c_0,c_1]$. Since $\conv(q_0,q_1)$ is an edge of $Q$ and $\conv(c_0,c_1)$ is an edge of $\cube[d-r]$, we have that $\conv(q_0,q_1)\times \conv(c_0,c_1)$ is a $2$-face of $Q \times \cube[d-r]$. Hence $\mathfrak{C}$ is the graph of a $2$-face of $P$.
        \qedhere
    \end{itemize}
\end{proof}
	
\subsection{Pentagons}
Now we move on to $5$-cycles inside the vertex-edge graph of a non-simple slice of a cube. A necessary condition for an induced $5$-cycle to be a $2$-dimensional face is that it is the intersection of a hyperplane with a $3$-dimensional cube. This forces exactly one of the vertices of the cycle to be a vertex of the $3$-dimensional cube. The others are intersections with edges (hence, necessarily simple; see \Cref{fig:pentagonal_slice}). We make use of the following convexity notion, initially introduced in \cite{ASdSS13:P3*convex}.
    
\begin{definition}
    Let $G$ be a graph and let $\mathfrak{C}=(v_0,v_1,\ldots,v_{k-1},v_0)$ be an induced $k$-cycle. We say that $\mathfrak{C}$ is \defn{$P_3^*$-convex} if the neighbors satisfy
    \[
    N_G(v_{i})\cap N_G(v_{j})\subseteq \verts(\mathfrak{C})
    \]
    for every $i,j=0,\ldots,k-1$, $i \neq j$ such that $v_i \not\in N_G(v_{j})$.
\end{definition}
\begin{figure}[ht]
    \centering
    \begin{tikzpicture}[scale=0.8]

\definecolor{join_color}{named}{YellowOrange}
\definecolor{product_color}{named}{PineGreen}

\tikzset{
    cycle_edge/.style={product_color, very thick},
    extra_edge/.style={black, very thick},
    bad_edge/.style={join_color, very thick},
}

\begin{scope}[xshift=-4.2cm]

    \coordinate (v1) at (0,2.0);
    \coordinate (v2) at (-1.55,0.95);
    \coordinate (v3) at (-0.95,-1.05);
    \coordinate (v4) at (0.95,-1.05);
    \coordinate (v5) at (1.75,0.75);

    \coordinate (u1) at (0.15,1.15);
    \coordinate (u2) at (0.15,0.05);
    \coordinate (u3) at (0.95,0.05);
    \coordinate (u4) at (0.95,0.90);

    \draw[cycle_edge]
        (v1)--(v2)--(v3)--(v4)--(v5)--cycle;

    \draw[extra_edge] (v1)--(u1);
    \draw[extra_edge] (v2)--(u1);
    \draw[extra_edge] (u1)--(u2)--(u3)--(u4)--(u1);
    \draw[extra_edge] (u2)--(v3);
    \draw[extra_edge] (u3)--(v4);
    \draw[extra_edge] (u4)--(v5);

    \draw[extra_edge] (v1) -- +(0.35,1.0);
    \draw[extra_edge] (v2) -- +(-1.05,0.15);
    \draw[extra_edge] (v3) -- +(-0.55,-1.05);
    \draw[extra_edge] (v4) -- +(0.75,-0.85);
    \draw[extra_edge] (v5) -- +(1.05,0.05);

    \foreach \p in {v1,v2,v3,v4,v5}{
        \fill[product_color] (\p) circle (3pt);
    }
    \foreach \p in {u1,u2,u3,u4}{
        \fill[black] (\p) circle (2pt);
    }
\end{scope}

\begin{scope}[xshift=4.2cm]

    \coordinate (w1) at (0,2.0);
    \coordinate (w2) at (-1.55,0.95);
    \coordinate (w3) at (-0.95,-1.05);
    \coordinate (w4) at (0.95,-1.05);
    \coordinate (w5) at (1.75,0.75);

    \coordinate (x) at (0.3,0.55);

    \coordinate (y) at (-0.4,0.5);

    \draw[cycle_edge]
        (w1)--(w2)--(w3)--(w4)--(w5)--cycle;

    \draw[extra_edge] (x)--(w1);
    \draw[extra_edge] (x)--(w4);

    \draw[extra_edge] (y)--(w3);
    \draw[extra_edge] (y)--(x);
    \draw[extra_edge] (y)--(w2);

    \draw[extra_edge] (w1) -- +(0.00,1.10);
    \draw[extra_edge] (w2) -- +(-1.00,0.20);
    \draw[extra_edge] (w3) -- +(-0.60,-0.95);
    \draw[extra_edge] (w4) -- +(0.95,-0.10);
    \draw[extra_edge] (w5) -- +(1.05,0.10);

    \foreach \p in {w1,w2,w3,w4,w5}{
        \fill[product_color] (\p) circle (3pt);
    }
    \fill[join_color] (x) circle (3pt);
    \fill[black] (y) circle (2pt);

    \node[right] at (x) {\textcolor{join_color!80!black}{$x$}};
\end{scope}
\end{tikzpicture}
    \caption{Graph with a $P_3^*$-convex $5$-cycle (left) and graph with a non-$P_3^*$-convex $5$-cycle (right) $5$-cycle (the $5$-cycles are displayed in green). The orange vertex $x$ on the right makes the cycle non-$P_3^*$-convex.}
    \label{fig:P3*convex_cycle}
\end{figure}
In other words, all common neighbors of two non-neighboring vertices of the cycle are vertices of the cycle. This imposes no constraint on $3$-cycles. For $5$-cycles, it says that 
\[
N_G(v_{i-1})\cap N_G(v_{i+1}) = \{v_i\}
\]
for $i$ modulo $5$. \cref{fig:P3*convex_cycle} shows an example (left) and a non-example (right) of $P_3^*$-convex $5$-cycles.
Not being $P_3^*$-convex is the only obstruction for $5$-cycles to be faces.

\begin{proposition}\label{prop:5cycles}
    Let $P=\cube\cap H\subset \R^d$ be a slice. Let $G$ be the vertex-edge graph of $P$. Let
    \[
    \mathfrak{C}=(v_0,v_1,v_2,v_3,v_4,v_0)
    \]
    be an induced $5$-cycle in $G$ containing exactly one non-simple vertex. Then
    $$ \mathfrak{C} \mbox{ is the graph of a pentagonal } 2\mbox{-face of } P \Longleftrightarrow \mathfrak{C} \mbox{ is $P_3^*$-convex.}$$
\end{proposition}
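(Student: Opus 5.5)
The plan is to work in coordinates: by \ref{assumption:through_origin} the unique non-simple vertex is $v_0=\zero$ and $H=u^\perp$, and by \cref{prop:simple_vertices_through_edges} each simple vertex $v_1,\dots,v_4$ lies in the relative interior of a cube edge $E_1,\dots,E_4$. The basic tool is the observation already used in \cref{prop:triangles,prop:4cyclesimple}. If two vertices $x,y$ of $P$ are adjacent, then by \cref{lem:slicefacedim} there is a cube face $F$ of dimension at most $2$ with $[x,y]=F\cap H$. This $F$ contains the minimal cube faces of both $x$ and $y$, which forces $|\supp(x)\symdiff\supp(y)|\le 2$ up to the fractional coordinate. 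I will use this to compare minimal faces along the cycle.

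For ($\Rightarrow$), suppose $\mathfrak C$ is the graph of a pentagon $Q\cap H$ with $Q$ a $3$-face of $\cube$, say spanned by $e_1,e_2,e_3$. Then $\zero$ is the unique cube vertex of $Q$ in $H$, and the $E_i$ are four edges of $Q$. Take $i\ne j$ non-adjacent on the cycle and a common neighbour $x$. Then $x$ lies in a $2$-face $F_i$ containing the minimal face of $v_i$ and in a $2$-face $F_j$ containing the minimal face of $v_j$.
\begin{itemize}
\item First I show that the minimal faces of $v_i$ and $v_j$ are not contained in a common $2$-face of $Q$. If they were, then $H$ would cut that $2$-face in the edge $[v_i,v_j]$, which contradicts the assumption that $\mathfrak C$ is induced.
\item Hence these minimal faces (two edges of $Q$, or $\zero$ and an edge of $Q$) together span $Q$.
\item It follows that the minimal face of $x$, which lies in $F_i\cap F_j$, is contained in $Q$. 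I will check this by listing the few possibilities: two $2$-faces of the cube containing two such skew pieces of $Q$ can only meet inside $Q$.
\end{itemize}
Therefore $x\in Q\cap H$, so $x$ is a vertex of the pentagon, hence $x\in\verts(\mathfrak C)$, which is $P_3^*$-convexity.

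For ($\Leftarrow$), I first pin down the supports as in the proof of \cref{prop:4cyclesimple}.
\begin{itemize}
\item Since $v_1$ and $v_4$ are adjacent to $\zero$, we have $E_1=[e_i,e_{ij}]$ and $E_4=[e_k,e_{kl}]$.
\item Since $v_2$ is adjacent to $v_1$ but not to $\zero$, $E_2$ lies in a $2$-face $[e_i,e_{ij},e_{im},e_{ijm}]$ and $|\supp(v_2)|=3$. Symmetrically, $v_3$ lies on an edge with support of size $3$ built from $k,l$.
\item The edge $[v_2,v_3]$ gives $|\supp(v_2)\symdiff\supp(v_3)|\le 2$.
\end{itemize}
The goal is to show that all five vertices lie in one $3$-face $Q$ of $\cube$. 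Then $Q\cap H$ is a $2$-face containing the five edges of $\mathfrak C$, hence equal to the pentagon; it cannot be $Q\subset H$ because $v_1$ is simple. If the supports do not fit into a common set of size $3$, the index bookkeeping leaves only a short list of configurations. In each of them I will exhibit an explicit point $x$ outside the cycle that is a common neighbour of two non-adjacent cycle vertices. The candidates are the intersection of $H$ with a cube edge or vertex lying on a shared $2$-face, for instance in the face spanned by $\{i,k\}$ or $\{j,m\}$ at the appropriate base vertex. Constructing such an $x$ contradicts $P_3^*$-convexity.

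The main obstacle is exactly this last case analysis. I must guarantee that the proposed $x$ is actually a vertex of $P$ and that the relevant $2$-face meets $H$ in an edge through it. This uses the sign pattern of $u$ on the coordinates involved: the signs are forced by which side of $H$ the endpoints of the $E_i$ lie, and this is what makes the required $2$-faces cross $H$. I would first try to show that, in any non-coplanar configuration, the $2$-face through $E_1$ and $E_4$'s neighbour that would close up $Q$ is still cut by $H$, producing the forbidden extra common neighbour of $v_1$ and $v_4$'s cycle-neighbour.
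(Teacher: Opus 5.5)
Your ($\Rightarrow$) direction is correct, and it takes a different route from the paper. The paper writes out the five pentagon vertices using $\max\{u_1,u_2\}<u_3<u_1+u_2$ and checks every distance-two pair by hand. Your argument avoids that computation: the minimal cube faces of two non-adjacent cycle vertices cannot share a square of $Q$, since otherwise $H$ would cut an edge between them. Hence they span $Q$, two cube faces of dimension at most $2$ through them meet inside $Q$, and every common neighbour is a vertex of the pentagon. The last step needs no case list. If a coordinate outside $Q$ were free in both $F_i$ and $F_j$, then among the free coordinates of $Q$ each of $F_i,F_j$ could only have the free direction of its minimal face. Then $F_i\cap F_j\neq\emptyset$ would force the two minimal faces to agree on all other coordinates, i.e.\ to lie in a common face of dimension at most $2$.

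The genuine gap is in ($\Leftarrow$). Your strategy is the paper's: contrapositive via supports, with a witness when the cycle leaves every $3$-face. But the step you defer as ``the main obstacle'' is the whole content of this direction. Naming candidate faces does not show that any candidate is a vertex of $P$ adjacent to the right pair. Three things are missing.
\begin{itemize}
\item \emph{The configuration.} If the five vertices are not in a common $3$-face, the bookkeeping must show that, up to relabeling, $v_1=e_2+\lambda e_3$, $v_2=e_{12}+\mu e_3$, $v_3=e_{12}+\nu e_4$, $v_4=e_1+\eta e_4$ with $\lambda,\mu,\nu,\eta\in(0,1)$. All other patterns must be excluded: $\supp(v_1)$ and $\supp(v_4)$ sharing an index, and every other placement of $v_2,v_3$. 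Each of them either leaves $v_2,v_3$ without a common square or produces a chord.
\item \emph{The inequalities.} Substituting into $H$ gives, up to sign, $-u_1x_1-u_2x_2+u_3x_3+u_4x_4=0$ with all $u_i>0$. Then $\mu,\nu<1$ yield $u_1+u_2<u_3$ and $u_1+u_2<u_4$. These inequalities are exactly what place a witness in the cube and make the relevant squares meet $H$ in the right segments.
\item \emph{The witness.} Take $w=e_1+\frac{u_1}{u_3}e_3$. It is joined to $\zero$ through the square spanned by $e_1,e_3$, and to $v_2$ through the square with $x_1=1$, $x_2,x_3$ free and all other coordinates $0$. So $N_G(\zero)\cap N_G(v_2)\supseteq\{v_1,w\}$.
\end{itemize}
Your closing guess also works once the first two points are in place. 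The point $x=e_2+\frac{u_2}{u_4}e_4$ lies on $[e_2,e_{24}]$ since $u_2<u_4$. It is joined to $v_1$ through the square with $x_2=1$ and $x_3,x_4$ free, and to $v_3$ through the square with $x_2=1$ and $x_1,x_4$ free. Without these steps, ($\Leftarrow$) is asserted rather than proved.
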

	
\begin{proof}
    We can use \ref{assumption:through_origin} to assume that $H$ contains the origin, and that it is the unique non-simple vertex of $\mathfrak{C}$. Thus, $\mathfrak{C}=(\zero,v_1,v_2,v_3,v_4,\zero)$. 
    If $v$ is a simple vertex of $P$, we denote by $E(v)$ the edge of $\cube$ whose relative interior contains $v$.\\
    
    $(\Rightarrow)$ We first prove that a facial pentagon is $P_3^*$-convex. Assume that $\mathfrak{C}$ is the graph of a $2$-face $F$ of $P$. Since $F$ is a pentagon, it cannot be a face of the cube itself. Hence, there is a $3$-face $Q$ of $\cube$ such that
    \[
    F=Q\cap H.
    \]
    Since $\zero\in F$, after relabeling coordinates we may assume that $Q$ is spanned by $e_1,e_2,e_3$, and that the restriction of $H$ to $Q$ has equation
    \[
    -u_1 x_1-u_2 x_2+u_3 x_3=0
    \]
    with $u_1,u_2,u_3>0$. 
    From this sign pattern, we obtain that the two edges $[\zero, v_1], [\zero,v_4]$ of $\mathfrak{C}$ arise as the intersection of $H$ with the $2$-dimensional cube-faces $F_{13}$ and $F_{23}$, respectively, where $F_{ij} = \conv(\zero,e_i,e_j,e_{ij})$. In order for $F$ to be a pentagon, the hyperplane $H$ must intersect the relative interior of the cube-edge $\conv(e_i, e_{i3})$ for $i=1,2$ (see \Cref{fig:pentagonal_slice}). Since $u_i > 0$, $H$ contains $e_i$ on its negative side. In order to have $e_{i3}$ on the positive side, we must have $-u_i + u_3 > 0$, and hence $u_i < u_3$. Furthermore, a pentagonal slice requires that $H$ separates $e_{13}$ and $e_{23}$ (which lie on the positive side) from $e_{123}$, so $-u_1 - u_2 + u_3 < 0$. Thus,
    \[
    \max\{u_1,u_2\}<u_3<u_1+u_2.
    \]
    \begin{figure}[ht]
        \centering
        \tdplotsetmaincoords{70}{50}

\begin{tikzpicture}[
    tdplot_main_coords,
    scale=2.5,
    line join=round,
    line cap=round
]

\definecolor{slice_color}{named}{PineGreen}

\pgfmathsetmacro{\uone}{0.6}
\pgfmathsetmacro{\utwo}{0.8}
\pgfmathsetmacro{\uthree}{1.0}

\coordinate (A) at (0,0,0);
\coordinate (B) at (1,0,0);
\coordinate (C) at (0,1,0);
\coordinate (G) at (1,1,0);

\coordinate (H) at (0,0,1);
\coordinate (D) at (1,0,1);
\coordinate (E) at (0,1,1);
\coordinate (F) at (1,1,1);

%
\coordinate (P0) at (0,0,0);

\coordinate (P1) at
    (1,0,{\uone/\uthree});

\coordinate (P2) at
    (1,{(\uthree-\uone)/\utwo},1);

\coordinate (P3) at
    ({(\uthree-\utwo)/\uone},1,1);

\coordinate (P4) at
    (0,1,{\utwo/\uthree});

\draw[thick,dotted]
    (A)--(B)
    (A)--(C)
    (C)--(E)
    (A)--(H);

\filldraw[
    fill=white,
    fill opacity=0.7,
    draw=slice_color,
    very thick
]
    (P0)--(P1)--(P2)--(P3)--(P4)--cycle;

\filldraw[
    fill=slice_color,
    fill opacity=0.3,
    draw=slice_color,
    very thick
]
    (P0)--(P1)--(P2)--(P3)--(P4)--cycle;

\draw[thick,dotted]
    (B)--(G)--(C)
    (B)--(D)
    (H)--(D)--(F)--(E)--cycle
    (G)--(F);

\foreach \P in {A,B,C,D,E,F,G,H}{
    \fill (\P) circle (0.45pt);
}

\foreach \P in {P0,P1,P2,P3,P4}{
    \fill[slice_color] (\P) circle (0.8pt);
}

\node[above right] at (P3) {{\color{slice_color!80!black} $F$}};

\node[below left]  at (A) {$\zero$};
\node[below right] at (B) {$e_1$};
\node[above right] at (C) {$e_2$};
\node[above left]  at (H) {$e_3$};

\node[below right] at (G) {$e_{12}$};
\node[below left]  at (D) {$e_{13}$};
\node[above]       at (E) {$e_{23}$};
\node[above right] at (F) {$e_{123}$};

\end{tikzpicture}
        \caption{Geometry of a pentagonal slice $F$ of a $3$-dimensional cube.}
        \label{fig:pentagonal_slice}
    \end{figure}

    The vertices of $F$ are then, in cyclic order,
    \[		
    \zero,\quad
    e_1+\frac{u_1}{u_3}e_3,\quad
    e_1+\frac{u_3-u_1}{u_2}e_2+e_3,\quad
    \frac{u_3-u_2}{u_1}e_1+e_2+e_3,\quad
    e_2+\frac{u_2}{u_3}e_3 .
    \]
    We check that no pair of vertices at distance two in this pentagon has an extra common neighbor in $G$. Consider the vertices
    \[
    \zero
    \qquad\text{and}\qquad
    e_1+\frac{u_3-u_1}{u_2}e_2+e_3 .
    \]
    A common neighbor of them must be adjacent to the origin, hence it must lie on a cube edge contained in a square through the origin. It must also be adjacent to the second vertex, whose supporting cube edge has equations $x_1=x_3=1$ and $x_2$ free. 
    Hence, a common neighbor lies on an edge with $x_1 = 1$ and $x_3$ free, or $x_3 = 1$ and $x_1$ free. In the first case, we obtain the vertex
    \[
    e_1+\frac{u_1}{u_3}e_3,
    \]
    which indeed belongs to $\mathfrak{C}$. In the second case, the intersection of $H$ with the line $e_3 + \R e_1$ satisfies $x_1 = \frac{u_3}{u_1} > 1$, so the intersection point is not contained in the cube.
    Thus,
    \[
    N_G(\zero)\cap N_G\left(e_1+\frac{u_3-u_1}{u_2}e_2+e_3\right)
    =
    \left\{e_1+\frac{u_1}{u_3}e_3\right\}.
    \]
    The same argument, with indices $1$ and $2$ interchanged, gives the analogous statement for the other length-two path from $\zero$.
    
    For the remaining length-two paths, we use the same square-face check. The only cube edge which can be adjacent both to the edges supporting
    \[
    e_1+\frac{u_1}{u_3}e_3
    \qquad\text{and}\qquad
    \frac{u_3-u_2}{u_1}e_1+e_2+e_3
    \]
    is the edge with $x_1=x_3=1$ and $x_2$ free, giving the middle vertex $e_1+\frac{u_3-u_1}{u_2}e_2+e_3$. 
    Similarly, the only common neighbor of
    \[
    e_1+\frac{u_3-u_1}{u_2}e_2+e_3
    \qquad\text{and}\qquad
    e_2+\frac{u_2}{u_3}e_3
    \]
    is $\frac{u_3-u_2}{u_1}e_1+e_2+e_3$. 
    Finally, the only possible extra common neighbor of
    \[
    e_1+\frac{u_1}{u_3}e_3
    \qquad\text{and}\qquad
    e_2+\frac{u_2}{u_3}e_3
    \]
    would lie on the edge with $x_1=x_2=1$ and $x_3$ free. But this would require $x_3=\frac{u_1+u_2}{u_3}>1$, which is outside the cube. Hence, their only common neighbor is the origin. Therefore, $\mathfrak{C}$ is $P_3^*$-convex.\\
		
    $(\Leftarrow)$ Conversely, assume that $\mathfrak{C}$ is not the graph of a $2$-face. We show that $\mathfrak{C}$ is not $P_3^*$-convex. Since $[\zero, v_1]$ and $[\zero,v_4]$ are edges of $P$, the vertices $v_1$ and $v_4$ lie on cube edges contained in squares through the origin. In particular, their supports have size two. Since $\mathfrak{C}$ is induced, neither $v_2$ nor $v_3$ is adjacent to $\zero$, so their supports have size at least three. As $[v_1,v_2]$ and $[v_3,v_4]$ are edges of $P$, their supports have size three.
    
    If all vertices of $\mathfrak{C}$ were contained in a common $3$-face of $\cube$, then the intersection of this $3$-face with $H$ would be a polygonal $2$-face of $P$ containing the induced $5$-cycle $\mathfrak{C}$. A plane section of a $3$-cube through one of its vertices has at most five vertices, so this polygon would be exactly the pentagon $\mathfrak{C}$, contradicting the assumption that $\mathfrak{C}$ is not facial. Hence, the vertices of $\mathfrak{C}$ are not contained in a common $3$-face of the cube.
    
    On the other hand, every edge of $\mathfrak{C}$ is the section with $H$ of a square face of $\cube$. Therefore, the vertices of $\mathfrak{C}$ are contained in a $4$-face of the cube. After relabeling coordinates and possibly reversing the order of the cycle, we may write
    \[
    v_1=e_2+\lambda e_3,\qquad
    v_2=e_1+e_2+\mu e_3, \qquad
    v_3=e_1+e_2+\nu e_4,\qquad
    v_4=e_1+\eta e_4,
    \]
    with $0<\lambda,\mu,\nu,\eta<1$.
    Indeed, there is no restriction on $v_1$ except $\supp(v_1) = 2$, so we may assume $v_1 = e_2 + \lambda e_3$. From $\supp(v_2) = 3$ we obtain that $\supp(v_1) \subset \supp(v_2)$, and by a symmetric argument we obtain $\supp(v_4) \subset \supp(v_3)$. Since the cycle is not contained in a $3$-face of $\cube$, we have $|\supp(v_1)\cup \supp(v_4)|\geq 4$, and since $\supp(v_1) = \{2,3\}, |\supp(v_4)| = 2$, we may choose $\supp(v_4) = \{1,4\}$ and $v_4 = e_1 + \eta e_4$. From the containment of supports, we may thus assume $\supp(v_2) = \{1,2,3\}$ and $\supp(v_3) = \{1,2,4\}$.
    Now, since $[v_1,v_2]$ form an edge, we have $v_2 \in \{e_1 + e_2 + \mu e_3, \mu' e_1 + e_2 + e_3\}$. Similarly, from $[v_3,v_4]$ forming an edge we obtain $v_3 \in \{e_1 + e_2 + \nu e_4,  e_1 + \nu' e_2 + e_4\}$. However, $[v_2,v_3]$ leaves a unique choice for $v_2$ and $v_3$, as above.
    
    Thus, the cycle enters the $4$-face through the square spanned by the indices $2,3$, leaves through the square spanned by the indices $1,4$, and the middle edge changes from the $e_3$-direction to the $e_4$-direction.
    Restrict the equation of $H$ to this $4$-face. Since $\zero\in H$, and since the four displayed vertices lie in $H$, the equation has the form
    \[
    -u_1 x_1-u_2x_2+u_3x_3+u_4x_4=0
    \]
    with $u_1,u_2,u_3,u_4>0$. 
    From $v_1,v_2,v_3,v_4\in H$ we get
    \[
    \lambda=\frac{u_2}{u_3},\qquad
    \mu=\frac{u_1+u_2}{u_3},\qquad
    \nu=\frac{u_1+u_2}{u_4},\qquad
    \eta=\frac{u_1}{u_4}.
    \]
    Since $v_2$ and $v_3$ lie in the cube, it follows that $u_1+u_2<u_3$ and $u_1+u_2<u_4$.
    Consider the point
    \[
    w=e_1+\frac{u_1}{u_3}e_3.
    \]
    Since $u_1<u_3$, this point lies in the cube. Moreover $w\in H$, because $-u_1+u_3\frac{u_1}{u_3}=0$. 
    Thus, $w$ is a vertex of $P$.
    The point $w$ is adjacent to $\zero$: both lie in the square spanned by $e_1,e_3$, and $H$ cuts this square in the segment joining them. Also, $w$ is adjacent to $v_2$: both lie in the square defined by $x_1=1$ and $x_4=0$, with $x_2,x_3$ free, and on this square the section is the segment
    \[
    \left[ e_1+\frac{u_1}{u_3}e_3, \;
    e_1+e_2+\frac{u_1+u_2}{u_3}e_3 \right].
    \]
    Hence $w$ is a common neighbor of $\zero$ and $v_2$, as well as $v_1$, and $w\neq v_1$, so
    \[
    N_G(\zero)\cap N_G(v_2)\neq \{v_1\}.
    \]
    Hence, $\mathfrak{C}$ is not $P_3^*$-convex and the claim follows.
\end{proof}

\subsection{Two-dimensional faces of cube slices}
We summarize the results from \Cref{sec:2-faces} into a complete characterization of the cycles supporting $2$-dimensional faces of a slice of $\cube[d]$ that are incident to a non-simple vertex. Because of the characterization of $4$-cycles, this requires knowledge of the rank of $P$. 

\begin{proposition}\label{prop:2-faces_red}
	Let $P=\cube\cap H$ be a slice of rank $r\leq d$, $P\cong Q\times \cube[{d-r}]$. Let $G$ be the vertex-edge graph of $P$ and let $v$ be a non-simple vertex of $P$. Then the $2$-dimensional faces of $P$ incident to $v$ are precisely those supported by 
\begin{itemize}
    \item $3$-cycles of $G$ containing $v$ (cf. \cref{prop:triangles}),
    \item induced $4$-cycles of $G$ containing $v$ and at least one simple vertex (cf. \cref{prop:4cyclesimple,prop:4cycles_nonsimple_irred}),
    \item induced $4$-cycles $\mathfrak{C}=(v,v_1,v_2,v_3,v)$ of $G$ containing $v$ and whose vertices are all non-simple, such that either $\mathfrak{C}$ is contained in a subgraph of $G$ that corresponds to a copy of $\cube[{d-r}]$, or $\{v_0,v_1\}$ and $\{v_2,v_3\}$ are contained in two different copies of $Q$ (cf. \cref{prop:reducible_quadrilaterals}),
    \item $P_3^*$-convex induced $5$-cycles of $G$ containing $v$ and $4$ simple vertices (cf. \cref{prop:5cycles}).
\end{itemize}
\end{proposition}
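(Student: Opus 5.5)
The plan is to show two inclusions. Every $2$-face of $P$ containing $v$ has a cycle of one of the listed types as its graph. Conversely, every listed cycle is the graph of a $2$-face. For the first inclusion, take a $2$-face $F\ni v$. By the lemma bounding the sizes of $2$-faces through a non-simple vertex, $F$ is a triangle, quadrilateral or pentagon. Its boundary is an induced cycle of $G$: a chord between two vertices of a polygonal face would be an edge of $P$ through the relative interior of $F$, which is impossible.

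Next we sort these face cycles by type. A quadrilateral face all of whose vertices are non-simple falls under case (ii) or (iii) of \cref{prop:reducible_quadrilaterals}, since case (i) is excluded there. For a pentagonal face we must check that it has exactly one non-simple vertex. A pentagon is $Q'\cap H$ for a $3$-face $Q'$ of $\cube$, and $H$ passes through the cube vertex $v$. The computation in the proof of \cref{prop:5cycles} shows the other four vertices lie in relative interiors of cube edges, hence are simple by \cref{cor:simple_vertices}. That proposition then gives $P_3^*$-convexity.

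For the converse, we go through the types. Every $3$-cycle is a face by \cref{prop:triangles}. An induced $4$-cycle through the non-simple $v$ containing a simple vertex is a face by \cref{prop:4cyclesimple}. An induced all-non-simple $4$-cycle of type (ii) or (iii) is a face by \cref{prop:reducible_quadrilaterals}; the case analysis at the start of that proof shows (i)–(iii) exhaust all possibilities. A $P_3^*$-convex induced $5$-cycle with exactly one non-simple vertex $v$ is a face by \cref{prop:5cycles}.

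The main obstacle is the normalization from \cref{prop:simple_vertices_through_edges}, which assumes all simple vertices lie on cube edges. Several cited propositions use this assumption. Since the claim concerns only combinatorial data (faces, graph, simplicity, the copies of $Q$ and $\cube[d-r]$), we first replace $P$ by the combinatorially equivalent $P'$ and argue there. One also has to make sure the notion of ``copies of $Q$'' is transported consistently. I would do this by fixing the product isomorphism $P\cong Q\times\cube[d-r]$ of \cref{prop:u-iff-irreducible}, which is realized by free coordinates, and noting that the perturbation preserves those free coordinates.
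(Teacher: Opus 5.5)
Your proposal is correct and matches the paper. The paper gives no separate argument for this proposition; it obtains it by exactly your assembly of the size bound on $2$-faces through a non-simple vertex with \cref{prop:triangles}, \cref{prop:4cyclesimple}, \cref{prop:4cycles_nonsimple_irred}, \cref{prop:reducible_quadrilaterals} and \cref{prop:5cycles}, with \cref{prop:4cycles_nonsimple_irred} covering the case $r=d$. The checks you add are points the paper leaves implicit, and you handle them correctly: that face boundaries are induced cycles, that a pentagonal face has exactly one non-simple vertex, and that the normalization of \cref{prop:simple_vertices_through_edges} preserves the free coordinates defining the copies of $Q$ and $\cube[d-r]$.
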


Note that when the slice $P$ is known to be irreducible, the third type of cycles/faces in \cref{prop:2-faces_red} does not appear. In that case, there are exactly one type of triangular faces, one type of quadrilateral faces, and one type of pentagonal faces in $P$.

\section{Reconstruction theorem for cube slices}\label{sec: rec thm}
The goal of this section is to combine the results obtained in the previous sections in order to prove the main theorem. We start by recovering the rank of a given slice from its graph.

\subsection{Unique decomposition for slices}\label{sec:decomposition}
Let $P$ be a non-simple cube slice of rank $d-c$, that is 
\[
    P\cong Q\times\cube[c],
\]
where $Q$ is an irreducible slice of $\cube[{d-c}]$. Equivalently, by \Cref{prop:u-iff-irreducible}, the defining hyperplane $H$ of $P$ is parallel to exactly $c$ coordinate vectors. In this case, $G(P)\cong G(Q)\square \cubegraph[c]$.
Our aim is to determine $c$ (or equivalently, the rank $d-c$ of $P$), from its graph of $G(P)$.

Of course, if $P \cong Q \times \cube[c]$ then there exists a decomposition of $G(P)$ as a Cartesian product $G(Q)\square \cubegraph[c]$, but is $c$ the largest integer for which such a decomposition of the graph exists? This is unfortunately not true:
the graph $G(P)$ may have accidental Cartesian cube factors which do not come from geometric cube factors of $P$ (see \cref{ex:graph_fake_segment} below).
The following lemma shows, however, that such accidental cube factors can overestimate the true value of $c$ by at most one.
	
\begin{lemma}\label{lemma:c_or_c+1}
    Let $P=\cube\cap H$ be a non-simple slice and assume that $P$ has at least one simple vertex. Let $d-c$ be the rank of $P$. If $G(P)\cong \Gamma\square\cubegraph[t]$, then $t\leq c+1$.
\end{lemma}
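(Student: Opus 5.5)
We will compare vertex degrees in $G(P)$ with the structure forced by a graph factorization $G(P)\cong\Gamma\square\cubegraph[t]$. A Cartesian product with $\cubegraph[t]$ adds exactly $t$ to the degree of every vertex: $\deg_{G(P)}(\gamma,w)=\deg_\Gamma(\gamma)+t$. So $t$ is bounded by the minimum degree of $G(P)$, and more sharply, we want to bound the number of ``cube directions'' at a non-simple vertex.

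\textbf{Step 1: local setting.} Fix a non-simple vertex $v$ of $P$. By \ref{assumption:through_origin} and \cref{prop:u-iff-irreducible}, take $v=\zero$ and $H=u^\perp$. Let $c$ be the number of free coordinates, and let $a,b\geq 2$ be the numbers of cube edges at $\zero$ above and below $H$. Then $\deg(v)=c+ab$. The $c$ edges contained in $H$ are the geometric cube-factor directions. The remaining $ab$ edges $[\zero,w_{ij}]$ come from squares $F_{i,j}$ with $u_iu_j<0$.

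\textbf{Step 2: use of the product structure.} In $\Gamma\square\cubegraph[t]$, each vertex $x$ has $t$ incident ``cube edges'', one per coordinate $k\in[t]$. These have two properties.
\begin{itemize}
\item[(a)] Any two cube edges at $x$ in different directions span an induced $4$-cycle of $G$.
\item[(b)] A cube edge $[x,y]$ in direction $k$ induces a bijection $N(x)\setminus\{y\}\to N(y)\setminus\{x\}$ such that corresponding neighbours are adjacent. Geometrically, this makes every other edge at $x$ lie in an induced $4$-cycle or a triangle with $[x,y]$ in a ``parallel'' way.
\end{itemize}
The plan is to show that, among the $ab$ non-free edges at the non-simple vertex $\zero$, at most one can play the role of a cube-factor edge. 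Then $t\leq c+1$.

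\textbf{Step 3: the key obstruction.} Consider the structure of $\vf_P(\zero)\cong(\simp[a-1]\times\simp[b-1])\join\simp[c-1]$ at the level of edges. Two non-free edges $[\zero,w_{ij}]$ and $[\zero,w_{i'j'}]$ sharing an index ($i=i'$ or $j=j'$) lie in a common triangle or $2$-face. By \cref{prop:triangles} and the neighbourhood analysis, they have a common neighbour other than $\zero$ in a way incompatible with being two distinct product directions. In a product $\Gamma\square\cubegraph[t]$, two cube edges at $x$ have exactly one common neighbour besides $x$, namely the opposite corner of their square, and they are never in a triangle. So the cube-direction edges among the non-free ones correspond to pairs $(i,j)$ that are pairwise disjoint.

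Property (b) should also force any cube-direction edge $[\zero,w_{ij}]$ to be ``compatible'' with all other edges at $\zero$. Here we use the simple vertex, which is needed because the hypersimplex case $\Delta(4,2)$ does give fake factors freely. A cube-direction edge $e$ must map the simple vertices in one copy bijectively to simple vertices in the parallel copy, preserving degree $d-1$, since the degree in $\Gamma$ is constant along the fibre. Non-simple vertices carry degrees $c+a'b'$. We would argue that if two disjoint non-free edges $[\zero,w_{ij}]$ and $[\zero,w_{i'j'}]$ were both cube directions, then the square they span must be an induced $4$-cycle with all vertices non-simple, lying in a single copy of $Q$. One then follows a path from this square to a simple vertex, which exists by hypothesis and by connectivity within the copy of $\Gamma$ through the fibres. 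Transporting along both fake directions would produce a simple vertex whose local structure (\cref{cor:simple_vertices}: it lies on an edge interior, with neighbours in prescribed squares) is incompatible with two independent fake directions. For example, it would have too many induced $4$-cycles through a fixed edge, contradicting the $(d-1)$-regular simple vertex-figure structure.

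This last transport argument is the main obstacle. I would first try to make it concrete by showing that a fake direction edge $[\zero,w_{ij}]$ forces $a=2$ or $b=2$ locally, and that two of them would force the whole slice to be a product with $\Delta(4,2)$ (\cref{prop:hypersimplices}), which has no simple vertex. That contradicts the hypothesis.
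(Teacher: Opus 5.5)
There is a genuine gap: Step~3 does not establish the bound.

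\textbf{Shared index.} Your first claim is that two factor edges $[\zero,w_{ij}]$ and $[\zero,w_{ij'}]$ sharing an index are incompatible. As stated, this is not justified. When $H$ cuts the $3$-face spanned by $e_i,e_j,e_{j'}$ in a quadrilateral (e.g.\ $u_i>|u_j|+|u_{j'}|$), $w_{ij}$ and $w_{ij'}$ have a common neighbour other than $\zero$. That is exactly what two product directions would produce, so it gives no contradiction.

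\textbf{Disjoint indices.} This case is the core of the lemma, and you leave it as an acknowledged obstacle. The planned way around it rests on two false premises:
\begin{itemize}
\item The graph of $\Delta(4,2)$ is the octahedron graph, in which every edge lies in a triangle. So it has no $\cubegraph[1]$-factor at all.
\item A fake factor does not force $a=2$ or $b=2$. Take $H=\{10x_1+x_2+x_3-10x_4-x_5-x_6=0\}$. Then $a=b=3$, $c=0$, and simple vertices exist. The reasoning of \cref{ex:graph_fake_segment}, now with $L=x_2+x_3-x_5-x_6$ and $|L|\le 2<10$, still gives $G(P)\cong\Gamma\square\cubegraph[1]$.
\end{itemize}

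\textbf{The missing idea.} Apply degree constancy to the factor edge at $\zero$ itself, rather than transporting it to distant simple vertices.
\begin{itemize}
\item Along a $\cubegraph[t]$-edge the $\Gamma$-coordinate is fixed, so both endpoints have equal degree. Since $\zero$ is non-simple, so is the other endpoint $y_{ij}$. If $u_i\neq -u_j$, then $y_{ij}$ lies in the relative interior of a cube edge and is simple by \cref{cor:simple_vertices}. Hence $y_{ij}=e_i+e_j$ and $u_i=-u_j=\lambda$.
\item A $\cubegraph[1]$-factor edge lies in no triangle. Suppose some $i'\in A\setminus\{i\}$ had $u_{i'}\ge\lambda$. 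Then $H$ cuts the $3$-face spanned by $e_i,e_{i'},e_j$ in the triangle $\zero$, $e_i+e_j$, $\tfrac{\lambda}{u_{i'}}e_{i'}+e_j$, which contains the factor edge. The same holds symmetrically on the $B$ side.
\item So a fake factor edge forces $\lambda>|u_k|$ for every other $k\in A\cup B$. Two distinct fake pairs, whether disjoint or sharing an index, would then require contradictory strict inequalities.
\end{itemize}
Hence at most one of the $t$ factor edges at $\zero$ is not of the form $[\zero,e_k]$ with $k\in C$, and $t\le c+1$. This is how the paper argues. It is entirely local at $\zero$ and never needs to reach a simple vertex.
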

\begin{proof}
    Without loss of generality, see \ref{assumption:through_origin}, $H = u^\perp$ and the vertex of $P$ at $\zero$ is non-simple.
    Write
    \[
    A=\{i\in [d]\mid u_i>0\},\qquad
    B=\{i\in [d]\mid u_i<0\},\qquad
    C=\{i\in [d]\mid u_i=0\}.
    \]
    Since the rank of $P$ is $d-c$, we have $|C|=c$. 
    Moreover, since the origin is non-simple, we have $|A|,|B|\geq 2$.
    The edges of $P$ incident to $\zero$ are of two kinds. First, for each $i\in C$ there is the cube edge $[0,e_i]\subset H$.
    Second, for each pair $(a,b)\in A\times B$, the square face of $\cube$ spanned by the vectors $e_a$ and $e_b$ meets $H$ in an edge of $P$ through $\zero$. Let $y_{ab}$ be the other endpoint of this edge.
    Thus, the edges of $P$ incident to $\zero$ are precisely
    \[
    [0,e_i]\quad (i\in C),
    \qquad
    [0,y_{ab}]\quad (a\in A,\ b\in B).
    \]
    Suppose that
    \[
    G(P)\cong \Gamma\square \cubegraph[t].
    \]
	At the vertex $\zero$, there are exactly $t$ incident edges corresponding to the $\cubegraph[t]$-factor. Denote them by
    \[
    c_1,\ldots,c_t.
    \]
	We claim that among these $t$ edges, at most one is not a true cube edge $[0,e_i]$ with $i\in C$. For this, we use two elementary properties of an edge in a Cartesian cube factor. If an edge belongs to a $\cubegraph[t]$-direction, then:
    \begin{enumerate}
        \item[(i)] its two endpoints have the same degree in $G(P)$;
        \item[(ii)] it is contained in no $3$-cycle of $G(P)$.
    \end{enumerate}
	Consider one of the edges $c_k$, $1\leq k\leq t$. If $c_k$ is not one of the true cube edges $[0,e_i]$, then it must be some edge $[0,y_{ab}]$ with $a\in A$ and $b\in B$. Write
    \[
    u_a=\alpha>0,
    \qquad
    -u_b=\beta>0.
    \]
	If $\alpha\neq \beta$, then $y_{ab}$ is not a vertex of $\cube$; it is a transverse intersection point on an edge of $\cube$ with $H$. Hence, this endpoint is a simple vertex of $P$, of degree $d-1$.
	The vertex $\zero$ is non-simple, so its degree is larger than $d-1$. Thus, the two endpoints of $[0,y_{ab}]$ have different degrees, contradicting property $(i)$. Therefore every such fake cube-factor edge must satisfy $u_a=-u_b=\lambda$ for some $\lambda >0$. This implies that $y_{ab}=e_a+e_b$.
    We now show that $u_a = \lambda$ attains the unique maximum among all entries of $u$, i.e., that $\lambda > u_{a'}$ for all $a' \in A$. Thus,
	suppose for contradiction that there exists $a'\in A\setminus \{a\}$, with $u_{a'}\geq \lambda$. 
	Then we can solve the hyperplane equation to see that 
    \[
    y_{a'b}=\frac{\lambda}{u_{a'}} e_{a'}+e_b.
    \]
	Both $y_{ab}$ and $y_{a'b}$ belong to a common $2$-face of $\cube$, with $x_b=1$ and $x_i=0$ for $i\notin \{a,a',b\}$. The intersection of $H$ with the $3$-face of $\cube$ spanned by the coordinates $a,a',b$ is the triangle with vertices $0$, $y_{ab}$ and $y_{a'b}$. This triangle in $G(P)$ contradicts property~$(ii)$.
    The same argument, interchanging the roles of $A$ and $B$, shows that $-u_b=\lambda$ is also the unique maximum among the absolute values of the negative entries. Thus
    \begin{align*}
    u_{a'}&<\lambda
    &&
    \text{for every } a'\in A,\ a'\neq a,\\
    -u_{b'}&<\lambda
    &&
    \text{for every } b'\in B,\ b'\neq b.
    \end{align*}
	Consequently, any edge $y_{ab}$ which appears as a $\cubegraph[t]$-factor edge must correspond to a strictly dominating coefficient $\lambda$.
    Hence, there is at most one such pair $(a,b)$. This proves that $t\leq c+1$.
\end{proof}

As the proof shows, we obtain the following characterization of the slices of $\cube[d]$ with one fake cube-factor in the graph.
\begin{corollary}\label{cor:shape_dominant_slice}
    Let $P = \cube\cap H$ be a slice of rank $r$ such that $G(P)\cong \Gamma \square \cubegraph[d-r+1]$. Then, up to the symmetries of the cube, the defining equation of $H$ has the form
    \[
    \lambda x_1 - \lambda x_2 + \sum_{a\in A\setminus \{1\}} u_a x_a - \sum_{b\in B\setminus \{2\}} u_b x_b = 0,
    \]
    where $u_a>0$ for all $a\in A\setminus \{1\}$, $u_b>0$ for all $b\in B\setminus \{2\}$, $\lambda > |u_i|$ for all $i \in (A\cup B) \setminus \{1,2\}$, and $|A|+|B| = r$. Moreover, the segment $[\zero, e_{12}]$ is the graph-theoretic but non-geometric extra cube factor.
\end{corollary}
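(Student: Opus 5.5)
The plan is to re-read the proof of \cref{lemma:c_or_c+1} in the equality case $t=c+1$, where $c=d-r$, and to extract the normal form. First, since $t=c+1>c$, the slice is non-simple: if it were simple, every vertex would have degree $d-1$ and the hypothesis would have to be handled separately. We shall assume, as the corollary implicitly does, the setting of \cref{lemma:c_or_c+1}, namely $P$ non-simple with at least one simple vertex. By \ref{assumption:through_origin} we place a non-simple vertex at $\zero$ with $H=u^\perp$, and by \cref{prop:u-iff-irreducible} exactly $c$ coordinates of $u$ vanish, so $|A|+|B|=d-c=r$.

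Next, we look at the $t=c+1$ edges at $\zero$ lying in the $\cubegraph[t]$-directions. At most $c$ of them can be true cube edges $[\zero,e_i]$ with $i\in C$, so at least one is an edge $[\zero,y_{ab}]$. The lemma's argument then applies verbatim. Property (i) (equal degrees, with the simple/non-simple distinction coming from \cref{cor:simple_vertices} and the reduction of \cref{prop:simple_vertices_through_edges}) forces $u_a=-u_b=\lambda$, and hence $y_{ab}=e_a+e_b$. Property (ii) (no triangles) forces $\lambda>u_{a'}$ for all $a'\in A\setminus\{a\}$ and $\lambda>-u_{b'}$ for all $b'\in B\setminus\{b\}$. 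Permuting coordinates so that $a=1$ and $b=2$ then gives exactly the displayed equation, with the strict inequalities $\lambda>|u_i|$ for $i\in(A\cup B)\setminus\{1,2\}$. The uniqueness argument also shows that exactly one $\cubegraph[t]$-edge at $\zero$ is fake, so all $c$ edges $[\zero,e_i]$, $i\in C$, are genuine factor edges, and the extra one is $[\zero,e_{12}]$.

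The remaining point is that $[\zero,e_{12}]$ is \emph{non-geometric}, meaning it does not come from a cube factor of $P$. This follows because a geometric factor would give $P\cong Q'\times\cube[c+1]$, which contradicts the rank being $d-c$ as established by \cref{prop:u-iff-irreducible}.

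The main obstacle is the choice of base vertex. The corollary holds ``up to symmetries of the cube'', so we must justify that the non-simple vertex placed at $\zero$ indeed meets a fake factor edge. This holds because every vertex of a Cartesian product $\Gamma\square\cubegraph[t]$ has exactly $t$ incident factor edges, so any non-simple vertex works. A second subtlety is when the fake edge's endpoint $e_{12}$ is itself non-simple, but the argument never needed simplicity there, only $u_a=-u_b$.
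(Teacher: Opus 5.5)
Your proposal is correct and is exactly the paper's route: the corollary is read off from the proof of \cref{lemma:c_or_c+1} in the extremal case $t=c+1$. There, the equal-degree property of cube-factor edges forces $u_a=-u_b=\lambda$, the triangle-freeness of those edges forces strict dominance of $\lambda$ and hence uniqueness of the fake edge, and \cref{prop:u-iff-irreducible} gives $|A|+|B|=r$.

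Two minor points. Your opening inference that $t>c$ forces $P$ to be non-simple is unjustified, and should simply be replaced by the explicit assumption of the lemma's setting, as the paper implicitly does. Also, \cref{prop:simple_vertices_through_edges} is not needed: only the fact that relative-interior edge points are simple is used, whereas applying that reduction would replace $H$ by a different hyperplane.
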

    	
The next example shows that, indeed, such non-geometric but graph-theoretic cube factors may arise.
\begin{example}\label{ex:graph_fake_segment}
    Consider the slice $P=\cube[5]\cap H$, where
    \[
    H=\{x\in\mathbb R^5\mid 10x_1-10x_2+x_3-x_4-x_5=0\}.
    \]
    Since no coefficient of the defining equation is zero, the slice has no geometric cube factor; in other words $P$ is irreducible. In $P$ there are six non-simple vertices of degree $6$ and ten simple vertices. See \Cref{fig:graph_fake_segment}.
    \begin{figure}[ht]
        \centering
        \begin{tikzpicture}[scale=0.8, line join=round, line cap=round]

\definecolor{left_color}{named}{Periwinkle}
\definecolor{right_color}{named}{PineGreen}

\tikzset{
  factor edge/.style={draw=black!42, line width=0.35pt},
  product edge/.style={draw=YellowOrange, line width=0.8pt},
  left vertex/.style={circle, fill=left_color, inner sep=1.5pt},
  right vertex/.style={circle, fill=right_color, inner sep=1.5pt}
}


\coordinate (v1)  at (0.000,2.000);
\coordinate (v2)  at (1.250,3.650);
\coordinate (v3)  at (1.150,0.850);
\coordinate (v7)  at (2.450,2.150);
\coordinate (v11) at (3.250,4.550);
\coordinate (v15) at (3.250,0.000);
\coordinate (v16) at (4.900,1.050);
\coordinate (v14) at (4.950,3.450);


\begin{scope}[xshift=7.0cm, yshift=1.0cm]
\coordinate (v10) at (0.000,2.000);
\coordinate (v5)  at (1.250,3.650);
\coordinate (v8)  at (1.150,0.850);
\coordinate (v4)  at (2.450,2.150);
\coordinate (v6)  at (3.250,4.550);
\coordinate (v12) at (3.250,0.000);
\coordinate (v9)  at (4.900,1.050);
\coordinate (v13) at (4.950,3.450);
\end{scope}


\draw[factor edge] (v1) -- (v2);
\draw[factor edge] (v1) -- (v3);
\draw[factor edge] (v1) -- (v7);
\draw[factor edge] (v1) -- (v15);
\draw[factor edge] (v1) -- (v16);

\draw[factor edge] (v2) -- (v3);
\draw[factor edge] (v2) -- (v7);
\draw[factor edge] (v2) -- (v11);
\draw[factor edge] (v2) -- (v15);

\draw[factor edge] (v3) -- (v7);
\draw[factor edge] (v3) -- (v11);
\draw[factor edge] (v3) -- (v16);

\draw[factor edge] (v11) -- (v14);
\draw[factor edge] (v14) -- (v15);
\draw[factor edge] (v14) -- (v16);

\draw[factor edge] (v10) -- (v5);
\draw[factor edge] (v10) -- (v8);
\draw[factor edge] (v10) -- (v4);
\draw[factor edge] (v10) -- (v12);
\draw[factor edge] (v10) -- (v9);

\draw[factor edge] (v5) -- (v8);
\draw[factor edge] (v5) -- (v4);
\draw[factor edge] (v5) -- (v6);
\draw[factor edge] (v5) -- (v12);

\draw[factor edge] (v8) -- (v4);
\draw[factor edge] (v8) -- (v6);
\draw[factor edge] (v8) -- (v9);

\draw[factor edge] (v6) -- (v13);
\draw[factor edge] (v13) -- (v12);
\draw[factor edge] (v13) -- (v9);


\draw[product edge] (v1)  -- (v10);
\draw[product edge] (v2)  -- (v5);
\draw[product edge] (v3)  -- (v8);
\draw[product edge] (v7)  -- (v4);
\draw[product edge] (v11) -- (v6);
\draw[product edge] (v14) -- (v13);
\draw[product edge] (v15) -- (v12);
\draw[product edge] (v16) -- (v9);


\node[left vertex] at (v1)  {};
\node[left vertex] at (v2)  {};
\node[left vertex] at (v3)  {};
\node[left vertex] at (v7)  {};
\node[left vertex] at (v11) {};
\node[left vertex] at (v14) {};
\node[left vertex] at (v15) {};
\node[left vertex] at (v16) {};

\node[right vertex] at (v10) {};
\node[right vertex] at (v5)  {};
\node[right vertex] at (v8)  {};
\node[right vertex] at (v4)  {};
\node[right vertex] at (v6)  {};
\node[right vertex] at (v13) {};
\node[right vertex] at (v12) {};
\node[right vertex] at (v9)  {};

\fill[black] (v1) circle (2.8pt);
\node[left=1pt] at (v1) {$\zero$};

%
%

\end{tikzpicture}
        \caption{The graph of the irreducible slice $P$ of $\cube[5]$ in \Cref{ex:graph_fake_segment} that has a graph decomposition as $G(P)\cong \Gamma\square \cubegraph[1]$. One copy of $\Gamma$ has violet nodes (left), the other copy has green nodes (right); the edges of the $\cubegraph[1]$ factor are displayed in orange. In black, the vertex corresponding to the origin in $P$.}
        \label{fig:graph_fake_segment}
    \end{figure}
    At the origin we have $A=\{1,3\}$, $B=\{2,4,5\}$ and $C=\varnothing$.
	Thus, the origin is a non-simple vertex of degree $|A|\cdot|B|=6$.
	However, the vertex-edge graph $G(P)$ is not prime. Indeed, write $L=x_3-x_4-x_5$. Then $-2\leq L\leq 1$ for all $x \in \cube[5]$, and the equation of the slice becomes $10(x_1-x_2)+L=0$. Hence, for every fixed value of $(x_3,x_4,x_5)$, the intersection with the square in the coordinates $(x_1,x_2)$ is a non-degenerate segment. These segments give a perfect matching inside the graph $G(P)$, and this matching is a Cartesian $\cubegraph[1]$-factor.
	Therefore, $G(P)\cong \Gamma\square \cubegraph[1]$ for some graph $\Gamma$ on $8$ vertices.
	Nevertheless, this graph factor is not geometric because $P$ is irreducible.
\end{example}
	
Thanks to the following technical lemma, we can use the vertex-figure structure and the non-simple $4$-cycles to decide whether the cube factor proposed by the graph decomposition is geometric, or if it has a fake extra segment. The setting of the following statement is displayed in \cref{fig:lemma_extra_cube_factor}.
\begin{figure}[ht]
    \centering
    \begin{tikzpicture}[scale=0.6]

\definecolor{join_color}{named}{Periwinkle}
\definecolor{product_color}{named}{PineGreen}

\coordinate (O) at (0,0);

\coordinate (b1) at (1.8,-1.5);
\coordinate (b2) at (3,-1.8);

\draw[very thick] (O) -- (b1);
\draw[very thick] (O) -- (b2);

\node[below]       at (b1) {$e_{j}$};
\node[below right] at (b2) {$e_{\beta}$};

\filldraw[
    fill=white,
    fill opacity=0.7,
    draw=product_color,
    very thick
]
    (-1.4,-1.25) --
    (-1.1,0.95) --
    (4.55,2.75) --
    (4.25,-0.15) --
    cycle;
\filldraw[
    fill=product_color,
    fill opacity=0.3,
    draw=product_color,
    very thick
]
    (-1.4,-1.25) --
    (-1.1,0.95) --
    (4.55,2.75) --
    (4.25,-0.15) --
    cycle;

\coordinate (a1) at (-0.25,2.8);
\coordinate (a2) at (1.45,3);

\draw[very thick] (O) -- (a1);
\draw[very thick] (O) -- (a2);

\node[above left]  at (a1) {$e_{i}$};
\node[above right] at (a2) {$e_{\alpha}$};

\coordinate (e1) at (2.9,0.4);
\coordinate (ec) at (3.1,1.85);

\draw[product_color!80!black, very thick] (O) -- (e1);
\draw[product_color!80!black, very thick] (O) -- (ec);

\node[right] at (e1) {{\color{product_color!70!black} $e_{1}$}};
\node[right] at (ec) {{\color{product_color!70!black} $e_{c}$}};
\node[right] at (3.1,1.3) {{\color{product_color!70!black} $\vdots$}};

\coordinate (p) at (1.3,-0.4);
\coordinate (w) at (2.45,-0.1);

\draw[thick,dotted,gray] (a1) -- (b1);
\draw[thick,dotted,gray] (a2) -- (b2);
\draw[product_color!80!black, very thick] (O) -- (p);
\draw[product_color!80!black, very thick] (O) -- (w);

\node[right] at (p) {{\color{product_color!70!black} $p$}};
\node[right] at (w) {{\color{product_color!70!black} $w$}};

\fill (O) circle (3pt);

\node at (4.9,2.9) {\textcolor{product_color!80!black}{$H$}};
\node at (-0.3,0) {$v$};

\end{tikzpicture}
    \caption{Cartoon of the assumptions in \cref{lemma:c+1_4-cycle_cube_vertices}.}
    \label{fig:lemma_extra_cube_factor}
\end{figure}
\begin{lemma}\label{lemma:c+1_4-cycle_cube_vertices}
    Let $P=\cube\cap H$ be a non-simple slice with at least one simple vertex. Let $d-c$ be the rank of~$P$, so that $P \cong Q\times \cube[c]$, where $Q$ is an irreducible slice of $\cube[{d-c}]$. Suppose that the graph $G(P)$ has an accidental Cartesian cube factor of dimension $c+1$, that is, $G(P) \cong \Gamma \square \cubegraph[{c+1}]$. 

    Let $v$ be a non-simple vertex of $P$; under \ref{assumption:through_origin}, $v=\zero$ and $H=u^\perp$, and we write $A,B,C$ for the sets of indices with $u_i>0$, $u_i<0$, $u_i=0$, respectively.
    Among the $c+1$ edges at $\zero$ coming from the $\cubegraph[{c+1}]$-factor, exactly one, say $[\zero,w]$, is not an edge of the cube, and $w=e_\alpha+e_\beta$ for some $\alpha\in A$, $\beta\in B$ by \Cref{cor:shape_dominant_slice}.
    Let $p=y_{ij}$ with $i\in A\setminus\{\alpha\}$ and $j\in B\setminus\{\beta\}$. Then
    \[
    \{\zero,w,p\}\subset \text{common induced }4\text{-cycle} \qquad \Longrightarrow \qquad p=e_i+e_j\in\verts(\cube).
    \]
\end{lemma}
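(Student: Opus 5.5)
The plan is to pin down the fourth vertex of the cycle explicitly, and then show that the edge closing the cycle can only exist when $p$ is a cube vertex. Throughout, I use the normal form of \cref{cor:shape_dominant_slice}: $u_\alpha=\lambda$, $u_\beta=-\lambda$, and $\lambda>|u_k|$ for every $k\in(A\cup B)\setminus\{\alpha,\beta\}$, so $w=e_{\alpha\beta}$. Since $u_i>0>u_j$, the point $p=y_{ij}$ is the endpoint of $H\cap\conv(\zero,e_i,e_j,e_{ij})$ different from $\zero$. There are two cases.
\begin{itemize}
\item If $u_i=-u_j$, then $p=e_{ij}$ and we are done.
\item If $u_i\neq -u_j$, then $p$ lies in the relative interior of one of the cube edges $\conv(e_i,e_{ij})$ or $\conv(e_j,e_{ij})$. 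In particular $\supp(p)=\{i,j\}$, and exactly one of $i,j$ is a free coordinate along the minimal cube face $E(p)$ containing $p$.
\end{itemize}
We assume the second case and aim for a contradiction.

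First I would fix the shape of the cycle. Both $w$ and $p$ are neighbours of $\zero$. Moreover $w$ and $p$ are not adjacent: $|\supp(w)\symdiff\supp(p)|=4$, so no square face of $\cube$ contains both. Hence in any induced $4$-cycle through $\zero,w,p$, the vertex $\zero$ sits between $w$ and $p$. The cycle is therefore $(\zero,w,q,p,\zero)$, where $q$ is a common neighbour of $w$ and $p$ that is not adjacent to $\zero$.

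Next I would determine $q$ by the support arguments already used in \cref{prop:4cyclesimple,prop:5cycles}. The edge $[w,q]$ is the section of a square through $e_{\alpha\beta}$. The edge $[q,p]$ is the section of a square containing $E(p)$, so it has $x_i$ or $x_j$ free, the other one of $i,j$ fixed at $1$, and all but one further coordinate fixed at $0$.
\begin{itemize}
\item Going through the possible pairs of free coordinates in the square at $w$, the only compatible option is the square with $x_\alpha=x_\beta=1$ and $x_i,x_j$ free. In it, $H$ reads $u_ix_i+u_jx_j=0$, giving $q=e_{\alpha\beta}+p$.
\item The other candidates fail. For example, the square with $x_\alpha$ free, $x_\beta=1$ and $x_i$ or $x_j$ free gives $q=e_\beta+e_i+(1-u_i/\lambda)e_\alpha$ or a similar point. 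Such a point differs from $E(p)$ in both $x_\alpha$ and $x_\beta$, while a square containing $E(p)$ has only one extra free coordinate.
\item In this step the strict dominance $\lambda>|u_i|,|u_j|$ is exactly what keeps the relevant intersection points inside the cube, or forces them out of it.
\end{itemize}

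Finally I would derive the contradiction. The points $q=p+e_{\alpha\beta}$ and $p$ differ in the two coordinates $\alpha,\beta$. A $2$-face of $\cube$ containing both must have $x_\alpha,x_\beta$ free. It must also contain the minimal face $E(p)$, which already has one free coordinate among $i,j$ because $p$ is not a cube vertex. That requires a $3$-dimensional face, so $[p,q]$ is not an edge of $P$, contradicting the cycle. Hence $u_i=-u_j$ and $p=e_i+e_j$. As a sanity check, in that case the square $\{x_i=x_j=1\}$ with $x_\alpha,x_\beta$ free does cut $H$ in $[p,p+e_{\alpha\beta}]$.

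I expect the main obstacle to be the exhaustive check that $q=e_{\alpha\beta}+p$ is the only possible common neighbour of $w$ and $p$. This means enumerating all squares through $e_{\alpha\beta}$, distinguishing whether the free pair meets $\{\alpha,\beta\}$, $\{i,j\}$, or neither, and using strict dominance to discard candidates that leave the cube or fail to be adjacent to $E(p)$.
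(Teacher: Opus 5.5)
Your argument is correct in outline. It runs in the opposite direction from the paper's, and one piece of your framing is misleading.

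The paper starts at $p$. If, say, $p$ lies in the relative interior of $\conv(e_j,e_{ij})$, then every neighbour $q\neq\zero$ of $p$ has $q_j=1$ and $\supp(q)\subseteq\{i,j,k\}$. Since $w_j=0$, the square through $w$ carrying $[w,q]$ must free $j$, so it keeps one of $x_\alpha,x_\beta$ at $1$. This forces $k\in\{\alpha,\beta\}$ and pins $q$ to the cube vertex $e_k+e_j$, which is ruled out by $u_\beta+u_j<0$ or by $u_\alpha+u_j\neq0$ (strict dominance).

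You instead enumerate the squares through $w$. There, the square with $x_\alpha=x_\beta=1$ and $x_i,x_j$ free is not a distinguished ``only compatible option''. The point $q=e_{\alpha\beta}+p$ fails for exactly the reason your other candidates fail. What your enumeration really proves is the uniform statement that every neighbour $q\neq\zero$ of $w$ has $q_\alpha>0$ and $q_\beta>0$:
\begin{itemize}
\item A square freeing two coordinates outside $\{\alpha,\beta\}$ keeps both at $1$.
\item A square freeing $\alpha$ and some $m\notin\{\alpha,\beta\}$ has $x_\beta=1$. Its section gives $x_\alpha=1-u_m/\lambda>0$ if $m\in A$ (this is precisely where $\lambda>u_m$ enters), a cube edge if $m\in C$, and only the point $w$ if $m\in B$.
\item In particular, for $x_j$ free there is no ``similar point'': the section degenerates.
\item The squares freeing $\beta$ behave symmetrically, and the $\{\alpha,\beta\}$-square returns $\zero$.
\end{itemize}
Combined with your last step, this gives the contradiction in one stroke: any cube face through $p$ contains $E(p)$, on which $x_\alpha=x_\beta=0$ and one coordinate is already free.

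The paper's route is shorter, because $q_j=1$ spares it the enumeration of $N_G(w)$. Yours gives a $p$-independent description of the neighbours of $w$. Unlike the paper, it also justifies why the cycle must be $(\zero,w,q,p,\zero)$.
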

\begin{proof}
    Let 
    \[
    \mathfrak{C} = (\zero,w,q,p,\zero)
    \]
    be an induced $4$-cycle containing $\zero, w$, and $p$. 
    We claim that $p = e_i+e_j$. If not, then $p$ lies in the relative interior of an edge of the cube. For instance, if $u_i>-u_j$, then
    \[
    p=\frac{-u_j}{u_i}e_i+e_j,
    \]
    which lies in the relative interior of the cube edge $E=\conv(e_j,e_i+e_j)$. Every edge of $P$ incident to $p$ is obtained by intersecting $H$ with a square face of $\cube$ containing $E$. Therefore, since $p$ is adjacent to $q$, there exists an index $k\notin\{i,j\}$ such that $q\in F_1\cap H$, where 
    \[
    F_1 = \conv(e_j,e_i+e_j,e_j+e_k,e_i+e_j+e_k).
    \]
    In particular, $q_j = 1$ and $\supp(q) \subseteq \{i,j,k\}$.
    Likewise, since $q$ is adjacent to $w=e_\alpha+e_\beta$, there is a square $F_2$ of $\cube$ containing $w$ such that $[w,q]=F_2\cap H$. $F_2$ is defined by two free coordinates. Since $w_j=0$ and $q_j=1$, one of the two free coordinates of $F_2$ is $j$. Let $\ell$ be the other free coordinate. Then all coordinates of $q$, except possibly $j$ and $\ell$, agree with those of $w$.
    Now since $w,q \in F_2$ and $w_j = 0$ and $w_\alpha = w_\beta  =1$, at least one of $\alpha,\beta$ is fixed to be $1$ on $F_2$, and hence belongs to $\supp(q)$. Since $\supp(q) \subseteq \{i,j,k\}$ and $\{\alpha,\beta\}\cap\{i,j\}=\varnothing$,
    it follows that $k \in \{\alpha, \beta\}$. The other element of $\{\alpha,\beta\}$ must be the free coordinate $\ell$ of $F_2$, since otherwise it would be fixed to be equal to $1$ on $F_2$, and would therefore belong to $\supp(q)$. 
    Thus, $\{k,\ell\} = \{\alpha,\beta\}$. It follows that $i$ is not a free coordinate of $F_2$, so $w_i = 0$ implies $q_i=0$. Thus, $\supp(q) = \{k,j\}$. Since $k \in \{\alpha,\beta\}$, the only remaining possibilities are 
    \[
    q=e_\alpha+e_j \qquad\text{or}\qquad q=e_\beta+e_j.
    \]
    The second option is impossible because $u_\beta+u_j<0$ so $e_\beta+e_j\notin H$; the first option would imply $u_\alpha+u_j=0$, which is impossible by the strict dominance of $u_\alpha$, i.e., $u_\alpha > |u_j|$ (cf. \cref{cor:shape_dominant_slice}). Therefore, we have that $p=e_i+e_j \in\verts(\cube)$.    
\end{proof}

By \Cref{lemma:c_or_c+1}, the graph-theoretical cube-factor of a slice either coincides with its geometric cube-factor, or overestimates the geometric cube-factor by $1$.
The key tool to decide the actual rank of $P$ will be to compare two formulas for the number of $2$-faces of $P$ incident to a non-simple vertex. Depending on the difference of the graph-theoretic and geometric cube-factor, the two ways of counting will either differ or coincide: this will reveal the true geometric cube factor.

\begin{lemma}\label{lem:2faceformula1}
    Let $P=\cube\cap H$ be a cube slice with at least one simple vertex and at least one non-simple vertex $v$. Let $d-c$ be the rank of $P$, so that
    \[
    P \cong Q\times \cube[c],
    \]
	where $Q$ is an irreducible slice of $\cube[{d-c}]$. 
    Let $\delta=\deg(v)$ be the degree of $v$ in $G(P)$. Then the number of $2$-faces of $P$ containing $v$ is
    \[
    \mathcal{F}_2(v,c) = \frac{\delta(d-2)+c(\delta-d+1)}{2}.
    \]
\end{lemma}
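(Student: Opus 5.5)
The plan is to count $2$-faces through $v$ as edges of the vertex figure. By the discussion after \cref{prop:u-iff-irreducible}, if $a,b,c$ denote the numbers of cube edges at $v$ lying above, below, and in $H$, then
\[
\vf_P(v)\cong \simp[c-1]\join\bigl(\simp[a-1]\times\simp[b-1]\bigr),
\]
and the $2$-faces of $P$ containing $v$ correspond bijectively to the edges of $\vf_P(v)$. The value of $c$ here is the number of zero entries of $u$. By \cref{prop:u-iff-irreducible}, this equals the $c$ of the rank $d-c$, because $v$ is non-simple and hence, by \cref{cor:simple_vertices} and the standing reduction \cref{prop:simple_vertices_through_edges}, a vertex of $\cube$. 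Moreover $a+b+c=d$ and $\delta=c+ab$ by \cref{prop:deg_vertices}.

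Next I count the edges of this join. The graph of $\simp[c-1]$ is $K_c$, which has $\binom{c}{2}$ edges. The graph of $\simp[a-1]\times\simp[b-1]$ is $K_a\square K_b$, which has $a\binom{b}{2}+b\binom{a}{2}=\tfrac{ab(a+b-2)}{2}$ edges. The join contributes $c\cdot ab$ additional edges between the two parts. The total is therefore
\[
\binom{c}{2}+\frac{ab(a+b-2)}{2}+abc .
\]

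Then I rewrite this in terms of $\delta$, $d$ and $c$. Substituting $ab=\delta-c$ and $a+b=d-c$ gives
\[
\frac{c(c-1)+(\delta-c)(d-c-2)+2c(\delta-c)}{2}.
\]
Expanding the numerator:
\[
c^2-c+\delta d-\delta c-2\delta-cd+c^2+2c+2c\delta-2c^2 = \delta(d-2)+c\delta-cd+c,
\]
which equals $\delta(d-2)+c(\delta-d+1)$, as claimed.

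The main point needing care is the identification of $2$-faces through $v$ with edges of the vertex figure, together with the fact that the local $c$ coincides with the global rank parameter. The first is standard, via the face bijection \cite[Proposition 2.4]{Ziegler94:LectureOnPolytopes}. The second is exactly \cref{prop:u-iff-irreducible}. The hypothesis that a simple vertex exists is not needed for the count itself; it only matters for how the formula is used later. The rest is the routine algebra above.
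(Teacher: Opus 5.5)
Your proposal is correct and follows essentially the same route as the paper. Like the paper, you identify $\vf_P(v)\cong\simp[c-1]\join(\simp[a-1]\times\simp[b-1])$, with $c$ matched to the rank via \cref{prop:u-iff-irreducible}. You then count its edges as $\binom{c}{2}+b\binom{a}{2}+a\binom{b}{2}+abc$ and substitute $ab=\delta-c$, $a+b=d-c$. The algebra differs only cosmetically, and your remark that the simple-vertex hypothesis is not used in the count is accurate.
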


\begin{proof}
    Without loss of generality, see \ref{assumption:through_origin}, we assume that $v$ is the origin~$\zero$ and $H = u^\perp$.
    Write
    \[
    A=\{i\in [d]\mid u_i>0\},\qquad
    B=\{i\in [d]\mid u_i<0\},\qquad
    C=\{i\in [d]\mid u_i=0\}.
    \]
    By \cref{prop:u-iff-irreducible}, we have $|C|=c$. Denote $a=|A|$ and $b=|B|$. Since the origin is non-simple, $a,b\geq 2$.
	
    The edges of $P$ through the origin are precisely the cube edges $[\zero,e_k]$, for $k\in C$, together with the edges $[\zero,y_{ij}]$ obtained by intersecting $H$ with the square spanned by $e_i$ and $e_j$, for $i\in A$ and $j\in B$. Hence
    \[
    \delta=\deg(\zero)=c+ab.
    \]
	By \cref{lemma:vertex_figure}, the vertex figure at the origin has the form
    \[
    \simp[c-1]\join\bigl(\simp[{a-1}]\times \simp[{b-1}]\bigr),
    \]
	where $ab=\delta-c$ and $c+a+b=d$. Therefore, the number of $2$-faces of $P$ through the origin is the number of edges of this vertex figure, namely
    \begin{align*}
    \mathcal{F}_2(\zero,c) &= \binom{c}{2} + b\binom{a}{2} + a\binom{b}{2} + abc = \frac{c(c-1)}{2}   + \frac{ab(a-1)}{2}
       + \frac{ab(b-1)}{2}
       + abc \\
    &= \frac{c(c-1) + ab(a+b-2) + 2abc}{2} = \frac{c(c-1) + ab(d+c-2)}{2} \\
    &= \frac{c(c-1) + (\delta-c)(d+c-2)}{2} = \frac{\delta(d-2) + c(\delta-d+1)}{2}.\qedhere
    \end{align*}
\end{proof}

We can exploit the formula for the number of $2$-faces of a slice of given rank at a non-simple vertex. This will allow us to detect whether the proposed graph decomposition is geometric or not.

\begin{proposition}\label{prop:rank_from_graph}
    Let $P=\cube\cap H$ be a non-simple cube slice with at least one simple vertex. Then the rank of $P$ is completely determined from its graph.
\end{proposition}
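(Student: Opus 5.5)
The plan is to reduce the question to a dichotomy, then decide it from the graph by counting $2$-faces at a non-simple vertex. Since $P$ has a simple vertex, \cref{prop:dimension} gives $d$ from the graph: $d-1$ is the minimum vertex degree. Next we decompose $G(P)$ into prime factors using \cref{thm:cartesian_product}, and let $t$ be the number of $K_2$ factors, so that $G(P)\cong\Gamma\square\cubegraph[t]$ with $t$ maximal. This $t$ is determined by the graph. Since $P\cong Q\times\cube[c]$, we have $t\ge c$, and \cref{lemma:c_or_c+1} gives $t\le c+1$. Hence $c\in\{t,t-1\}$, and it remains to decide between the two cases using only $G(P)$.

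To decide, fix a non-simple vertex $v$, which is recognizable since its degree exceeds $d-1$, and let $\delta=\deg(v)$. By \cref{lem:2faceformula1}, the true number of $2$-faces at $v$ is $\mathcal F_2(v,c)$. Since $\delta-d+1>0$, this is strictly increasing in $c$, so $\mathcal F_2(v,t)\neq\mathcal F_2(v,t-1)$. Next we compute a graph-theoretic count $N(v)$ as if $c=t$. That is, we count the cycles through $v$ listed in \cref{prop:2-faces_red}, using the decomposition $\Gamma\square\cubegraph[t]$ to play the role of $Q\times\cube[c]$:
\begin{itemize}
\item $3$-cycles;
\item induced $4$-cycles containing a simple vertex;
\item all-non-simple induced $4$-cycles lying in a $\cubegraph[t]$-fiber, or of product type (iii) with respect to the $\Gamma$-fibers;
\item $P_3^*$-convex induced $5$-cycles whose other four vertices are simple.
\end{itemize}
All of these are detectable from the graph together with the chosen decomposition. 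If $c=t$, then \cref{prop:2-faces_red} shows $N(v)=\mathcal F_2(v,t)$ exactly. So it suffices to show that if $c=t-1$, then $N(v)\neq\mathcal F_2(v,t)$. We then set $c=t$ precisely when $N(v)=\mathcal F_2(v,t)$, and recover the rank as $d-c$.

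The main obstacle is the case $c=t-1$. The triangles, $4$-cycles with a simple vertex, and pentagons are still counted correctly, because those characterizations do not depend on the rank. The discrepancy therefore lies entirely in the all-non-simple $4$-cycles. By \cref{cor:shape_dominant_slice} and the statement preceding \cref{lemma:c+1_4-cycle_cube_vertices}, the fake factor edge at $v=\zero$ is $[\zero,w]$ with $w=e_\alpha+e_\beta$. Since $v$ is non-simple, $A$ and $B$ each have at least two elements, so $p=y_{ij}$ exists with $i\in A\setminus\{\alpha\}$ and $j\in B\setminus\{\beta\}$.

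Because $w$ is a fake cube direction, the graph-product structure forces a square $(\zero,w,q,p)$ of type (ii) or (iii) for each other edge $[\zero,p]$ outside the triangles through $[\zero,w]$. Each such square is counted in $N(v)$. I would then show these squares are not faces of $P$: by \cref{lemma:c+1_4-cycle_cube_vertices}, $p=e_i+e_j$ is a cube vertex, so all four vertices are non-simple cube vertices lying in one copy of the true $Q$. Case (i) of \cref{prop:reducible_quadrilaterals} then says they are not faces.

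Conversely, the genuine faces at $v$ through $[\zero,w]$ are the triangles and the squares $[\zero,w]\times[\zero,e_k]$ with $k\in C$, and these are counted in both. The count should therefore overshoot. I would verify that $N(v)-\mathcal F_2(v,t-1)$ is positive and differs from $\mathcal F_2(v,t)-\mathcal F_2(v,t-1)=(\delta-d+1)/2$. If exact equality bookkeeping proves delicate, I would fall back on a cleaner test: check whether every product square of $\Gamma\square\cubegraph[t]$ through the factor edges at $v$ satisfies the face criteria of \cref{prop:2-faces_red}. By the lemma, a violation certifies $c=t-1$.
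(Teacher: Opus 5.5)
Your plan is essentially the paper's argument. You count the candidate $2$-faces at a non-simple vertex as if all $t$ graph factors were geometric, compare this with $\mathcal F_2(v,t)$ from \cref{lem:2faceformula1}, and locate the discrepancy in the squares $(\zero,w,q,p)$ forced by \cref{lemma:c+1_4-cycle_cube_vertices}. Treating general $t$ directly, rather than first reducing to $t=1$ as the paper does, is harmless.

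The step you leave as ``I would verify'' is the decisive one, and it closes exactly as in the paper.
\begin{itemize}
\item For each $(i,j)\in(A\setminus\{\alpha\})\times(B\setminus\{\beta\})$ the square is $(\zero,e_i+e_j,e_{\{\alpha,\beta,i,j\}},e_\alpha+e_\beta)$. In a Cartesian product the fourth corner of a product square is the unique common neighbour of $w$ and $p$ other than $\zero$, and $e_{\{\alpha,\beta,i,j\}}$ is such a neighbour because $u_j=-u_i$.
\item These $(a-1)(b-1)$ squares are the only counted non-faces.
\item No genuine face is missed. Every all-non-simple quadrilateral face at $\zero$ is, by \cref{prop:reducible_quadrilaterals}, a product square of a true cube edge $[\zero,e_k]$ with another edge at $\zero$. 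By the proof of \cref{lemma:c_or_c+1}, these true cube edges are $\cubegraph[1]$-factor edges, so such faces are of graph type (ii) or (iii).
\end{itemize}
Hence $N(v)=\mathcal F_2(v,t-1)+(a-1)(b-1)$. On the other hand,
\[
\mathcal F_2(v,t)-\mathcal F_2(v,t-1)=\frac{\delta-d+1}{2}=\frac{(a-1)(b-1)}{2},
\]
so $N(v)-\mathcal F_2(v,t)=(a-1)(b-1)/2>0$.

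Two small corrections. First, a $\cubegraph[1]$-factor edge lies in no triangle, so there are no triangles through $[\zero,w]$. The genuine faces through $[\zero,w]$ are the $c$ squares with the edges $[\zero,e_k]$, $k\in C$, together with the $(a-1)+(b-1)$ quadrilaterals through the simple vertices $y_{i\beta}$ and $y_{\alpha j}$. The latter are product squares with $[\zero,w]$ and are counted correctly via \cref{prop:4cyclesimple}. Second, your fallback test does not work. For all-non-simple squares the criterion of \cref{prop:2-faces_red} presupposes the rank, and with respect to $\Gamma\square\cubegraph[t]$ every product square satisfies it by definition. So a count like the one above is genuinely needed.
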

\begin{proof}
    Let $t$ be the largest integer satisfying $G(P) =\Gamma\square \cubegraph[t]$. By \cref{lemma:c_or_c+1}, the rank of $P$ is either $d-t$ or $d-t+1$. If $t=0$, then $P$ is irreducible and there is nothing to decide. If $t\geq 1$, at least $t-1$ of the $\cubegraph[1]$-factors arise from an actual geometric factor $\cube[t-1]$ of $P$. After factoring out this geometric cube, the only remaining ambiguity is therefore whether the last $\cubegraph[1]$-factor is also geometric or is accidental. Hence, without loss of generality we assume that $t=1$, and we have to decide whether $P$ is irreducible or of rank $d-1$.

    For a non-simple vertex $v\in \verts(P)$, define $\mathcal C(v,0)$ to be the sum of the numbers
    \begin{itemize}
    \item of $3$-cycles of $G(P)$ containing $v$,
    \item of induced $4$-cycles of $G(P)$ containing $v$ and at least one simple vertex,
    \item and of $P_3^*$-convex induced $5$-cycles of $G(P)$ containing $v$ and $4$ simple vertices.
    \end{itemize}
    By \cref{prop:2-faces_red}, $\mathcal C(v,0)$ would be the number of $2$-faces of $P$ incident to $v$ if $P$ were irreducible. In that case, $\mathcal C(v,0)$ would coincide with $\mathcal{F}_2(v,0)$, the number of $2$-faces of $P$ containing $v$ as computed in \Cref{lem:2faceformula1}.

    Define now $\mathcal C(v,1)$ to be $\mathcal C(v,0)$ plus the number
    \begin{itemize}
    \item of induced $4$-cycles $\mathfrak{C}=(v,v_1,v_2,v_3,v)$ of $G(P)$ containing $v$, whose vertices are all non-simple and such that $\{v,v_1\}$ and $\{v_2,v_3\}$ are contained in two different copies of $\Gamma$.
    \end{itemize}
    By \cref{prop:2-faces_red}, $\mathcal C(v,1)$ would be the number of $2$-faces of $P$ incident to $v$ if $P$ were reducible of rank $d-1$. In that case, $\mathcal C(v,1)$ would coincide with $\mathcal{F}_2(v,1)$.

    We show that $P$ is reducible of rank $d-1$ if and only if $\mathcal F_2(v,1) = \mathcal C(v,1)$.
    To this end, our next step is to compare $\mathcal F_2(v,1)-\mathcal F_2(v,0)$ with $\mathcal C(v,1)-\mathcal C(v,0)$, namely the number of induced $4$-cycles containing $v$ whose vertices are all non-simple and that have edges in two different copies of $\Gamma$.
    Assume that $P$ has a non-simple vertex $v$. Without loss of generality, see \ref{assumption:through_origin}, $v=\zero$ and $H = u^\perp$.
    As usual, we write
    \[
        A=\{i\in [d]\mid u_i>0\},\qquad
        B=\{i\in [d]\mid u_i<0\},\qquad
        C=\{i\in [d]\mid u_i=0\},
    \]
    with respective cardinalities $a,b,c$. Since the origin is non-simple, we have $a,b\geq 2$.
    Because $\deg(\zero) = c+ab$ and $d=a+b+c$, we can rewrite
    \begin{equation}\label{eq:diffF}
        \mathcal{F}_2(\zero,1)-\mathcal{F}_2(\zero,0) = \frac{\deg(\zero)-d+1}{2}= \frac{(ab+c)-(a+b+c)+1}{2} =\frac{(a-1)(b-1)}{2}.
    \end{equation}
    This is the difference in the number of $2$-faces through the origin between the reducible and irreducible cases.
    We want to compare this with the formula we get for $\mathcal C(v,1)-\mathcal C(v,0)$. 
    
    We claim that if $P$ is irreducible, then $\mathcal{F}_2(\zero,1)\neq \mathcal C(\zero,1)$. Assume irreducibility and consider the edge of $G(P)=\Gamma \square \cubegraph[1]$ incident to $\zero$ that arises from the $\cubegraph[1]$-factor. 
    The corresponding neighbor of $\zero$ in $P$ has the form $y_{\alpha\beta}$ for some $\alpha\in A$ and $\beta\in B$. Moreover,
    by \cref{cor:shape_dominant_slice}, we have $y_{\alpha \beta} =  e_\alpha+e_\beta$. Fix any pair $i\in A\setminus\{\alpha\}$ and $j\in B\setminus\{\beta\}$.
    Since $[\zero,y_{\alpha\beta}]$ is an edge from the $\cubegraph[1]$-factor and $[\zero,y_{ij}]$ is not, the Cartesian product structure forces these two edges to span a product $4$-cycle in $G(P)$.
	By \cref{lemma:c+1_4-cycle_cube_vertices}, then $y_{ij}=e_i+e_j$, which implies that $u_i=-u_j$ for all $i\in A\setminus\{\alpha\}$ and $j\in B\setminus\{\beta\}$.
    In other words, for every pair
    \[
    (i,j)\in \bigl(A\setminus\{\alpha\}\bigr) \times \bigl(B\setminus\{\beta\}\bigr),
    \]
    we obtain an induced $4$-cycle whose vertices are all non-simple
    \[
    (\zero, e_i+e_j, e_\alpha+e_\beta+e_i+e_j, e_\alpha+e_\beta, \zero),
    \]
    where the edges $[\zero,e_i+e_j]$ and $[e_\alpha+e_\beta+e_i+e_j, e_\alpha+e_\beta]$ belong to different copies of $\Gamma$ in the Cartesian product. There are exactly $(a-1)(b-1)$ such cycles. This gives
    \begin{equation}\label{eq:diffC}
        \mathcal C(\zero,1)-\mathcal C(\zero,0)=(a-1)(b-1).
    \end{equation}
    Therefore, since we assumed $P$ irreducible, we have 
    \begin{align*}
         \mathcal{F}_2(\zero,0)&= \mathcal C(\zero,0),\\
         \mathcal{F}_2(\zero,1)&\overset{\eqref{eq:diffF}}{=} \mathcal{F}_2(\zero,0)+\frac{(a-1)(b-1)}{2}= \mathcal C(\zero,0)+\frac{(a-1)(b-1)}{2}\overset{\eqref{eq:diffC}}{=}\mathcal C(\zero,1)-\frac{(a-1)(b-1)}{2}\neq \mathcal C(\zero,1)
    \end{align*}
    where the last inequality comes from the fact that $a,b\geq 2$.

    This contrasts with the reducible case: if $P$ is reducible of rank $d-1$, then $\mathcal{F}_2(\zero,1)=\mathcal C(\zero,1)$, as observed above. Hence the rank of $P$ can be read off from the graph $G(P)$: $P$ is a reducible slice of rank~$d-1$ if and only if $\mathcal{F}_2(\zero,1)=\mathcal C(\zero,1)$.
\end{proof}

\begin{example}
    We continue \cref{ex:graph_fake_segment} to showcase how to detect the correct rank of a slice. Recall that the graph decomposes as $G(P) = \Gamma \square \cubegraph[1]$. Therefore, by \cref{lemma:c_or_c+1}, the geometric cube factor has dimension $c\in\{0,1\}$. Compute the numbers of $2$-faces adjacent to the origin in both cases:
    \[
    \mathcal{F}_2(\zero,c=0)=\frac{6(5-2)}{2}=9, \qquad \mathcal{F}_2(\zero,c=1)=\frac{6(5-2)+(6-5+1)}{2}=10. 
    \]
    On the other hand, going through $G(P)$ in \cref{fig:graph_fake_segment} and counting the admissible $3$-, $4$-, $5$-cycles, one has
    \[
    \mathcal{C}(\zero,c=0)= 5+4+0 = 9 , \qquad \mathcal{C}(\zero,c=1)= 5+(4+2) + 0 = 11.
    \]
    The two extra $4$-cycles counted in $\mathcal{C}(\zero,1)$ arise from the two non-simple neighbors of the origin in $\Gamma$. These edges turn into induced $4$-cycles with all non-simple vertices because of the product structure of the graph.
    Since $\mathcal{F}_2(\zero,1)  \neq \mathcal{C}(\zero,1)$, $c=0$ and $P$ is indeed irreducible.
\end{example}

\subsection{Reconstructing cube slices from their graph}
We are now ready to prove the main theorem about recovering polytopes from their graphs.
Our result is formulated as class reconstructibility up to combinatorial equivalence, see \cref{subsec:polytopes_graphs}. In some cases, however, we obtain the stronger property of labeled class reconstructibility.
\begin{figure}[ht]
    \centering
    \begin{tikzpicture}[
    node distance=1.0cm and 1.0cm,
    inout/.style={
        draw, very thick,
        fill=white!97!Periwinkle,
        rounded corners,
        align=center,
        inner sep=5pt,
        text width=3.5cm
    },
    box/.style={
        draw,
        fill=white!97!YellowOrange,
        rounded corners,
        align=center,
        inner sep=5pt,
        text width=4.3cm
    },
    smallbox/.style={
        draw,
        fill=white!97!PineGreen,
        rounded corners,
        align=center,
        inner sep=4pt,
        text width=3.5cm,
        font=\small
    },
    arrow/.style={->, thick}
    ]
    
    \node[inout] (input) {
        \textsc{Input}: the graph\\
        $G(P)$
    };
    
    \node[box, right=of input] (dimension) {
        Recover the ambient\\
        dimension $d$\\
        \scriptsize
        \cref{prop:dimension}
    };
    
    \node[box, right=of dimension] (hyper) {
        Separate the\\
        hypersimplex case
    };
    
    \node[box, below=of dimension] (product) {
        Recover the maximal\\
        $c$ such that\\
        $P \cong Q \times \cube[c]$\\
        \scriptsize
        \cref{prop:rank_from_graph}
    };
    
    \node[box, below=of product] (local) {
        Recover edge-labeled\\
        vertex figure
    };
    
    \node[smallbox, above right=-0.3cm and 0.5cm of local] (simple) {
        At simple vertices:
        \\
        \small vertex figure is a simplex 
    };
    
    \node[smallbox, below right=-0.3cm and 0.5cm of local] (nonsimple) {
        At non-simple vertices:
        \\
        \small 
        vertex figure is reconstructible from the cycles\\
        \scriptsize
        \cref{prop:2-faces_red}
    };
    
    \node[box, below= of local] (joswig) {
        Apply \cite[Theorem 2.3]{Joswig00:ReconstructNonSimplePolytope}
    };
    
    \node[inout, left=of joswig] (output) {
        \textsc{Output}: reconstruct\\
        the combinatorial type\\
        of $P$
    };
    
    \draw[arrow] (input) -- (dimension);
    \draw[arrow] (dimension) -- (hyper);
    \draw[arrow] (dimension) -- (product);
    \draw[arrow] (product) -- (local);
    \draw[arrow] ([yshift=2mm]local.east) -- ([yshift=-4mm]simple.west);
    \draw[arrow] ([yshift=-2mm]local.east) -- ([yshift=8mm]nonsimple.west);
    \draw[arrow] (local) -- (joswig);
    \draw[arrow] (joswig) -- (output);
    \draw[arrow,dashed] (hyper) -- (output);
    
    \node[box, below=of dimension] (product) {
        Recover the maximal\\
        $c$ such that\\
        $P \simeq Q \times \cube[c]$\\
        \scriptsize
        \cref{prop:rank_from_graph}
    };
    \node[box, below=of product] (local) {
        Recover edge-labeled\\ 
        vertex figure
    };
\end{tikzpicture}
    \caption{Structure of the proof of \cref{thm:mainthm}.}
    \label{fig:proof_scheme}
\end{figure}	
    
\begin{proof}[Proof of \Cref{thm:mainthm}]
Let $G(P)$ be the vertex-edge graph of $P = \cube\cap H$. By \cref{prop:dimension}, $G(P)$ determines the dimension $d-1$ of $P$. We distinguish two cases.

\noindent
\underline{Case 1}: $G(P)$ is isomorphic to the Cartesian product $J(d-c,k) \square \cubegraph[c]$ of a Johnson graph with a cube graph. By \cref{prop:dimension}, then $P \cong \Delta(d-c,k)\times \cube[c]$.

\noindent
\underline{Case 2}: $G(P)$ is not isomorphic to a product of a Johnson graph with a cube graph.
According to \cref{thm:edgelabeledreconstruction}, we only need to prove that we can reconstruct the edge labeled vertex figures of a cube slice $P=\cube \cap H$ from its graph. Let $v$ be a vertex of $P$.
If $v$ is simple, then the vertex figure of $P$ at $v$ is a simplex and its edge labeled vertex figure consists of all $(d-2)$-subsets of edges incident to $P$. 
If $v$ is not simple, since $P$ has at least a simple vertex, we can determine the dimension and the rank $r$ of $P$ using \cref{prop:rank_from_graph}. Once the rank is known, \cref{prop:2-faces_red} reconstructs all the $2$-faces of $P$ incident to $v$. 

There is one possible ambiguity in this reconstruction. If
\[
G(P)\cong \Gamma\square\cubegraph[d-r+1] \qquad\text{while}\qquad P\cong Q\times\cube[d-r],
\]
then one must choose which of the $\cubegraph[1]$-factors is the accidental, non-geometric one. Different choices produce combinatorially equivalent reconstructions, but they need not preserve the original labeling of the vertices. Thus, in this case, the argument reconstructs $P$ up to combinatorial equivalence, but not necessarily as a labeled polytope.

The $2$-faces of $P$ incident to $v$ determine the labeled graph of the vertex figure $\vf_P(v)$: its vertices correspond to the edges of $P$ incident to $v$, and two of them are adjacent precisely when the corresponding edges lie in a common $2$-face through $v$. By \cref{cor:simplexslices}, such vertex figure is labeled class reconstructible, and in particular this recovers the edge-labeled vertex figure of $P$ at $v$. By \cref{thm:edgelabeledreconstruction} the claim follows.  
\end{proof}

\begin{remark}
The preceding proof reconstructs the combinatorial type of $P$ from its graph in all cases. Moreover, except for one explicitly detectable situation, the reconstruction is in fact labeled.
Indeed, the only choice made in the proof arises when $r<d$ and
\[
G(P)\cong \Gamma\square\cubegraph[d-r+1], \qquad P\cong Q\times\cube[d-r],
\]
so that the graph contains one more $\cubegraph[1]$-factor than the geometric cube factor of $P$. In this case, one must choose which $\cubegraph[1]$-factor is the accidental one, and different choices may lead to different identifications of the reconstructed faces with the original labeled vertex set.
This exceptional situation can be recognized from the graph, since both the rank $r$ of $P$ and the maximal Cartesian cube factor of $G(P)$ are graph-theoretically determined. If the dimension of the maximal cube factor is exactly $d-r$, then every cube factor is geometric and no choice occurs in the reconstruction. Consequently, outside the case where the maximal cube factor has dimension $d-r+1$, the graph determines the face lattice of $P$ on its given vertex set; that is, the reconstruction is labeled.
\end{remark}

The next example shows that, when $P$ has rank $r$ but
\[
G(P)\cong \Gamma\square\cubegraph[d-r+1],
\]
the graph may determine the combinatorial type of $P$ without determining its labeled face lattice.
\begin{figure}[ht]
    \centering
    \begin{tikzpicture}[scale=0.8, line join=round, line cap=round]

\tikzset{
  factor edge/.style={draw=black!42, line width=0.35pt},
  product edge A/.style={draw=YellowOrange, line width=0.8pt},
  product edge B/.style={draw=BrickRed, line width=0.8pt},
  vertex/.style={circle, fill=Periwinkle, inner sep=1.5pt},
}


\coordinate (v3)  at (1.35,0.00);   
\coordinate (v1)  at (1.00,2.25);   
\coordinate (v10) at (2.35,1.75);   
\coordinate (v12) at (0.00,1.05);   

\begin{scope}[xshift=5.2cm,yshift=0.5cm]
\coordinate (v16) at (1.35,0.00);   
\coordinate (v6)  at (1.00,2.25);   
\coordinate (v8)  at (2.35,1.75);   
\coordinate (v9)  at (0.00,1.05);   
\end{scope}

\begin{scope}[xshift=0.2cm,yshift=4cm]
\coordinate (v2)  at (1.35,0.00);   
\coordinate (v4)  at (1.00,2.25);   
\coordinate (v11) at (2.35,1.75);   
\coordinate (v13) at (0.00,1.05);   
\end{scope}

\begin{scope}[xshift=5.4cm,yshift=4.5cm]
\coordinate (v15) at (1.35,0.00);   
\coordinate (v5)  at (1.00,2.25);   
\coordinate (v7)  at (2.35,1.75);   
\coordinate (v14) at (0.00,1.05);   
\end{scope}


\draw[factor edge] (v3) -- (v1);
\draw[factor edge] (v3) -- (v10);
\draw[factor edge] (v3) -- (v12);
\draw[factor edge] (v1) -- (v10);
\draw[factor edge] (v1) -- (v12);

\draw[factor edge] (v16) -- (v6);
\draw[factor edge] (v16) -- (v8);
\draw[factor edge] (v16) -- (v9);
\draw[factor edge] (v6) -- (v8);
\draw[factor edge] (v6) -- (v9);

\draw[factor edge] (v2) -- (v4);
\draw[factor edge] (v2) -- (v11);
\draw[factor edge] (v2) -- (v13);
\draw[factor edge] (v4) -- (v11);
\draw[factor edge] (v4) -- (v13);

\draw[factor edge] (v15) -- (v5);
\draw[factor edge] (v15) -- (v7);
\draw[factor edge] (v15) -- (v14);
\draw[factor edge] (v5) -- (v7);
\draw[factor edge] (v5) -- (v14);


\draw[product edge A] (v3)  -- (v16);
\draw[product edge A] (v1)  -- (v6);
\draw[product edge A] (v10) -- (v8);
\draw[product edge A] (v12) -- (v9);

\draw[product edge A] (v2)  -- (v15);
\draw[product edge A] (v4)  -- (v5);
\draw[product edge A] (v11) -- (v7);
\draw[product edge A] (v13) -- (v14);


\draw[product edge B] (v3)  -- (v2);
\draw[product edge B] (v1)  -- (v4);
\draw[product edge B] (v10) -- (v11);
\draw[product edge B] (v12) -- (v13);

\draw[product edge B] (v16) -- (v15);
\draw[product edge B] (v6)  -- (v5);
\draw[product edge B] (v8)  -- (v7);
\draw[product edge B] (v9)  -- (v14);


\node[vertex]  at (v3)  {};
\node[vertex]  at (v1)  {};
\node[vertex]  at (v10) {};
\node[vertex]  at (v12) {};

\node[vertex]  at (v2)  {};
\node[vertex]  at (v4)  {};
\node[vertex]  at (v11) {};
\node[vertex]  at (v13) {};

\node[vertex] at (v16) {};
\node[vertex] at (v6)  {};
\node[vertex] at (v8)  {};
\node[vertex] at (v9)  {};

\node[vertex] at (v15) {};
\node[vertex] at (v5)  {};
\node[vertex] at (v7)  {};
\node[vertex] at (v14) {};


\node[below right=1pt] at (v3)  {\scriptsize $1$};
\node[above left=1pt]  at (v1)  {\scriptsize $2$};
\node[right=1pt]       at (v10) {\scriptsize $3$};
\node[below left=1pt]  at (v12) {\scriptsize $4$};

\node[below right=1pt] at (v2)  {\scriptsize $5$};
\node[above left=1pt]  at (v4)  {\scriptsize $6$};
\node[right=1pt]       at (v11) {\scriptsize $7$};
\node[below left=1pt]  at (v13) {\scriptsize $8$};

\node[below right=1pt] at (v16) {\scriptsize $9$};
\node[above left=1pt]  at (v6)  {\scriptsize $10$};
\node[right=1pt]       at (v8)  {\scriptsize $11$};
\node[below left=1pt]  at (v9)  {\scriptsize $12$};

\node[below right=1pt] at (v15) {\scriptsize $13$};
\node[above left=1pt]  at (v5)  {\scriptsize $14$};
\node[right=1pt]       at (v7)  {\scriptsize $15$};
\node[below left=1pt]  at (v14) {\scriptsize $16$};

\end{tikzpicture}
    \caption{The vertex-edge graph of $P$ from \cref{ex:ambiguous_recovery}. It has two Cartesian $\cubegraph[1]$-factors, shown in orange and red, although only one of them arises from a geometric segment factor of $P$.}
    \label{fig:ambiguous_recovery}
\end{figure}
\begin{example}\label{ex:ambiguous_recovery}
    Let $P = \cube[5] \cap H$ with $H = \set{x\in \R^5}{ 10 x_1 - 10 x_2 + x_3 - x_4 = 0}$. Since the equation of $H$ does not involve $x_5$, $P$ has rank $4$. On the other hand, the Cartesian prime factorization of its graph is
    \[
    G(P) \cong (K_4 - e) \square \cubegraph[2],
    \]
    where $(K_4 - e)$ denotes the complete graph on four vertices with one edge removed, see \cref{fig:ambiguous_recovery}. 
    The reconstruction therefore requires a choice of which of the two $\cubegraph[1]$-factors is to be interpreted as the geometric factor. Either choice produces a polytope of the same combinatorial type, namely
    \[
    P\cong Q \times \cube[1],
    \]
    where $Q$ is obtained by gluing two triangular prisms along a quadrilateral facet. However, the two choices induce different face structures on the given labeled vertex set. More precisely, if the yellow $\cubegraph[1]$-factor is chosen as geometric, then
    \[
    \{1,2,5,6\}
    \]
    is not the vertex set of a face, but it is the vertex set of a face if the red factor is chosen as geometric. Thus the graph determines the combinatorial type of $P$, but not which subsets of its labeled vertices are faces.
\end{example}

\section{Generalizations and future work}\label{sec: generalizations}

The reconstruction result of \cref{thm:mainthm} naturally raises the question of how far the underlying ideas extend beyond hyperplane sections of cubes. The proof relies both on the special combinatorial structure of the cube and on the fact that the section has codimension one, and it is therefore not immediate which parts of the argument survive in a more general setting.
We conclude by briefly discussing two directions that seem particularly natural. As we will see, in both directions, our results are negative and highlight the exceptionality of cube slices.

First, one may replace hyperplane sections by intersections of the cube with affine subspaces of higher codimension. In this case, the local structure of the resulting polytope becomes more complicated, and the relation between its faces and the faces of the ambient cube is less rigid. It would be interesting to determine whether the vertex-edge graph still determines the combinatorial type. As we will see in~\cref{thm:lowerdimsections}, there is no reconstructibility for sections with subspaces of codimension at least~$4$, leaving open the cases of codimension~$2$ and~$3$.

Second, one may ask whether analogous reconstruction results hold when the cube is replaced by a broader class of polytopes. As we showed in the introduction, it easy to see that one cannot reconstruct slices of arbitrary polytopes from their graph. Here the main challenge is to identify which combinatorial features of the cube are essential for the argument, and which can be formulated in a way that applies to other ambient polytopes, such as products of simplices, zonotopes, or more general simple polytopes. The latter is a particularly natural family, but we will see in \cref{thm:slice_simple_polytope} that there is no reconstruction for simple polytopes either.

\subsection{Higher codimension} \label{sec:highcodim}

In our first example, we show that we can embed two $4$-polytopes with the same graph and different combinatorial type as sections of the $8$-dimensional cube with $4$-dimensional flats. 
\begin{example}\label{ex:4-dim_sections}
Consider the $4$-polytope $P\subset \mathbb{R}^4$ with vertices
\begin{gather*}
v_1=(0,0,1,0), \quad
v_2=(0,0,-1,0), \quad
v_3=(1,0,0,0),\\
v_4=(0,1,0,0), \quad
v_5=(-1,-1,0,0), \quad
v_6=(0,0,0,1).
\end{gather*}
This is a pyramid over a triangular bipyramid.
The facets of $P$ are cut out by the following nonnegative affine functions:
\begin{gather*}
s_1=x_4,
\
s_2=\tfrac{1}{3}(1-x_1-x_2-x_3-x_4),
\
s_3=\tfrac{1}{3}(1+2x_1-x_2-x_3-x_4),
\
s_4=\tfrac{1}{3}(1-x_1+2x_2-x_3-x_4),\\
s_5=\tfrac{1}{3}(1-x_1-x_2+x_3-x_4),
\qquad
s_6=\tfrac{1}{3}(1+2x_1-x_2+x_3-x_4),
\qquad
s_7=\tfrac{1}{3}(1-x_1+2x_2+x_3-x_4).
\end{gather*}
Each $s_i$ is a normalized slack: it is nonnegative on $P$, it vanishes exactly on one facet, and it takes values in $[0,1]$ on all of $P$.
Thus the map
\[
\varphi_P: x\mapsto (s_1(x),s_2(x),\ldots,s_7(x))
\]
embeds $P$ as an affine section of $\cube[7]$, i.e., $\varphi_P(P) = [0,1]^7 \cap \varphi_P(\R^4)$. In order to put it inside $\cube[8]$, we add one more affine coordinate, such as $s_8=\frac12-\frac{x_3}{4}+\frac{x_4}{2}$. This coordinate also takes values in $[0,1]$ on $P$. Hence we obtain an affine embedding
\[
\Phi_P:P\longrightarrow \cube[8]
\]
given by $\Phi_P(x)=(s_1(x),s_2(x),\ldots,s_8(x))$. The image $\Phi_P(P)$ is equal to $\cube[8]\cap L_P$, where $L_P$ is the affine hull of the six points $\Phi_P(v_i)$. The affine subspace $L_P$ is cut out by the equations
\begin{gather*}
2y_1-y_2+2y_3+2y_4+3y_5=2, \qquad 2y_1+2y_2-y_3+2y_4+3y_6=2,\\
2y_1+2y_2+2y_3-y_4+3y_7=2, \qquad 3y_1+y_2+y_3+y_4-4y_8=-1.
\end{gather*}
Now define a second $4$-polytope $Q\subset \mathbb{R}^4$ with vertices
\begin{gather*}
w_1=\left(0,0,1,\frac12\right), \quad
w_2=(0,0,-1,0), \quad
w_3=(1,0,0,0),\\
w_4=(0,1,0,0), \quad
w_5=(-1,-1,0,0), \quad
w_6=(0,0,0,1).
\end{gather*}
This polytope is obtained from $P$ by moving one vertex. Combinatorially, the non-simplex facet of $P$ is replaced by two simplex facets, but the graph of $P$ coincides with the graph of $Q$.
The facets of $Q$ are cut out by the following normalized slack functions:
\begin{gather*}
t_1=x_4-\tfrac{x_3}{2},
\qquad  
t_2=x_4,
\qquad \quad  \ 
t_3=\tfrac{1}{3}(1-x_1-x_2-\tfrac{x_3}{2}-x_4),
\quad
t_4=\tfrac{1}{3}(1+2x_1-x_2-\tfrac{x_3}{2}-x_4),\\
t_5=\tfrac{1}{3}(1-x_1+2x_2-\tfrac{x_3}{2}-x_4),
\quad
t_6=\tfrac{1}{3}(1-x_1-x_2+x_3-x_4),
\quad
t_7=\tfrac{1}{3}(1+2x_1-x_2+x_3-x_4), \\
\qquad
t_8=\tfrac{1}{3}(1-x_1+2x_2+x_3-x_4). \hspace{1.5em}
\end{gather*}
Again, each $t_i$ is nonnegative on $Q$, vanishes exactly on one facet, and takes values in $[0,1]$.
The slack map
\[
\Phi_Q:Q\longrightarrow \cube[8]
\]
is given by $\Phi_Q(x)=(t_1(x),t_2(x),\ldots,t_8(x))$. 
Its image is an affine section of $\cube[8]$. More precisely, it is $\cube[8]\cap L_Q$, where $L_Q$ is the affine hull of the six points $\Phi_Q(w_i)$.
The affine subspace $L_Q$ is in particular cut out by the equations
\begin{gather*}
y_1-2y_2-y_3-y_4-y_5=-1,
\qquad
y_1-y_2-y_3+y_6=0,\\
y_1-y_2-y_4+y_7=0,
\qquad
y_2+y_3+y_4+y_8=1.
\end{gather*}
In this way, both $P$ and $Q$ are realized as $4$-dimensional affine sections of the same cube $\cube[8]$. The construction is simply the slack-coordinate embedding, so it holds analogously in more general examples.
\end{example}

The preceding construction can be stabilized by taking Cartesian products with cubes and by fixing additional coordinates. This produces counterexamples in every ambient dimension $d\geq 8$ and for every section dimension between $4$ and $d-4$.
\begin{theorem} \label{thm:lowerdimsections}
    For every $d\geq 8$ and for every $4\leq k\leq d-4$ there exist two sections of $\cube$ of dimension $k$ with isomorphic vertex-edge graph but non-isomorphic face lattice.
\end{theorem}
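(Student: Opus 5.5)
The plan is to start from \cref{ex:4-dim_sections} and stabilize it in two independent ways. That example gives two $4$-polytopes $P,Q$ with isomorphic graphs and non-isomorphic face lattices, realized as $\cube[8]\cap L_P$ and $\cube[8]\cap L_Q$ with $\dim L_P=\dim L_Q=4$. The codimension there is $8-4=4$. To reach an arbitrary pair $(d,k)$ with $4\leq k\leq d-4$, we write $k=4+m$ and $d=8+m+n$ with $m=k-4\geq 0$ and $n=d-k-4\geq 0$. We then take products with $\cube[m]$ to raise the section dimension, and append $n$ fixed coordinates to raise the ambient dimension.

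First we raise the section dimension. For $m\geq 0$ we consider $P\times\cube[m]$ and $Q\times\cube[m]$. As subsets of $\cube[8+m]=\cube[8]\times\cube[m]$ these are
\[
(\cube[8]\cap L_P)\times\cube[m]=\cube[8+m]\cap (L_P\times\R^m),
\]
and similarly for $Q$. Both are sections by affine flats of dimension $4+m=k$. Their graphs are $G(P)\square\cubegraph[m]\cong G(Q)\square\cubegraph[m]$, so they are isomorphic.

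Second we raise the ambient dimension. We embed $\cube[8+m]$ as the face $\{x_{9+m}=\dots=x_{d}=0\}$ of $\cube$ and use the flat $L_P\times\R^m\times\{0\}^n$. Its intersection with $\cube$ equals the previous polytope, since the extra coordinates are fixed at a vertex value. This keeps the section dimension at $k$ and gives ambient dimension $d=8+m+n$. The same construction is applied to $Q$.

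The remaining step, and the main obstacle, is to show that $P\times\cube[m]\not\cong Q\times\cube[m]$. Face lattices of products do not trivially cancel, so I would use an invariant. One option is to count facets together with their combinatorial types. Facets of $X\times\cube[m]$ are $F\times\cube[m]$ for facets $F$ of $X$, together with $2m$ copies of $X\times\cube[m-1]$.

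A cleaner route is an intrinsic characterization. As in the proof of \cref{prop:u-iff-irreducible}, one can try to show that the maximal cube factor $\cube[j]$ with $X\cong R\times\cube[j]$ is determined combinatorially, and that the complementary factor $R$ is unique up to isomorphism, at least when $X$ has no cube factor itself. A concrete version of this is to count facets that are simplices. $P$ has a unique non-simplex facet, a bipyramid, while $Q$ has only simplex facets. One checks directly from the listed vertices that neither $P$ nor $Q$ is a prism, so neither has a $\cube[1]$ factor. Then, by induction on $m$, a face-lattice isomorphism $P\times\cube[m]\to Q\times\cube[m]$ would restrict to the minimal faces of the form $X\times\{\text{vertex}\}$. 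These faces are characterized combinatorially, for instance as the $4$-faces $F$ such that the faces containing $F$ form a Boolean lattice of rank $m$. The restriction would therefore give $P\cong Q$, which is a contradiction.

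Alternatively, one can compare $f$-vectors. $P$ has $7$ facets and $Q$ has $8$, and since the $f$-polynomial of a product is the product of the $f$-polynomials, unequal $f$-polynomials of $P$ and $Q$ stay unequal after multiplying by that of $\cube[m]$. This last argument is the one I would actually write down, as it avoids any uniqueness-of-factorization issue.
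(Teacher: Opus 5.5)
Your proposal is correct and is essentially the paper's proof. Both stabilize \cref{ex:4-dim_sections} by a product with $\cube[k-4]$ and by $d-k-4$ fixed extra coordinates (you fix them at $0$, the paper at $1/2$; either choice gives $k$-dimensional sections), and your facet count $7+2m$ versus $8+2m$, or the $f$-polynomial argument, makes explicit the non-isomorphism that the paper only asserts. Your middle-paragraph characterization of the faces $X\times\{v\}$ via Boolean upper intervals is wrong as stated: for a facet $F$ of $X$ and an edge $E$ of $\cube[m]$, the face $F\times E$ also has a Boolean upper interval of rank $m$. You discard that route in favour of the $f$-vector argument, so the proof is unaffected.
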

\begin{proof}
    Let $P$ and $Q$ be the two $4$-dimensional sections of $\cube[8]$ from \Cref{ex:4-dim_sections}, which have isomorphic vertex-edge graphs but non-isomorphic face lattices. For $4\leq k\leq d-4$, set
    \[
    P_k=P\times \cube[k-4]\times\{1/2\}^{d-k-4}, \qquad Q_k=Q\times \cube[k-4]\times\{1/2\}^{d-k-4}.
    \]
    These are $k$-dimensional sections of
    \[
    \cube[8]\times \cube[k-4]\times \cube[d-k-4]\cong \cube.
    \]
    Since the graph of a Cartesian product of polytopes is the Cartesian product of their graphs, $G(P_k)\cong G(Q_k)$. On the other hand, taking the product with the same cube factor does not make the non-isomorphic face lattices of $P$ and $Q$ isomorphic. Hence $P_k$ and $Q_k$ are not combinatorially equivalent.
\end{proof}

Thus, graph reconstruction already fails for cube sections of codimension at least four. The remaining cases not covered by this construction are therefore codimensions two and three.

\begin{question}
Can slices of the cube of codimension two and three be recovered from their graphs?
\end{question}

\subsection{Slices of other polytopes}
Our reconstruction result cannot be extended to slices of arbitrary ambient polytopes. Indeed, it is easy to construct a (non-simple) polytope having two facets with isomorphic vertex-edge graphs but non-isomorphic face lattices. Since facets are hyperplane sections, this already provides counterexamples in the absence of assumptions on the ambient polytope.

On the positive side, hyperplane sections of simplices are also reconstructible from their graphs (\cref{cor:simplexslices}). From a combinatorial viewpoint, simplices and cubes belong to broader and natural classes of simple polytopes. For example, simplices are the basic factors in products of simplices, while cubes are products of segments. This suggests investigating to what extent the reconstruction phenomenon persists in these larger classes.

\begin{question}
Are hyperplane sections of a product of simplices class reconstructible?
\end{question}

More generally, in view of the intensive use of the simplicity of cubes in our proof, it would be natural to ask if hyperplane sections of any simple polytope may be reconstructed from their vertex-edge graphs. 
However, we will see that the answer is negative.

Finding such a counterexample is substantially more delicate. Indeed, to our knowledge, the standard examples of combinatorially non-equivalent polytopes with isomorphic graphs cannot occur as hyperplane sections of simple polytopes, because they fail basic local restrictions imposed by such a realization. More precisely, let $P$ be a $(d-1)$-dimensional hyperplane section of a simple $d$-polytope. Then:
\begin{itemize}
    \item every vertex of $P$ is incident to at most $d$ facets, equivalently, to at most $\dim(P)+1$ facets; and hence 
    \item every vertex figure of $P$ is an iterated pyramid over a product of two simplices;
\end{itemize}
Thus, a counterexample to graph reconstruction in this setting would have to consist of two polytopes with isomorphic graphs and non-isomorphic face lattices, while satisfying these rather restrictive local conditions at every vertex. 

The following example constructs such a pair, which will be the starting point of our counterexample. This example is inspired by \cite[Sections 1.3 and 2.3]{PfeiflePilaudSantos}.
It combines a graph automorphism that is not induced by a combinatorial automorphism of the polytope with vertex truncation, which corresponds graph-theoretically to the star-clique operation.

\begin{example}\label{ex:simple_slices_counterexample}
Consider the four-dimensional polytope $R$ with the following set of vertices $\mathcal R=\{r_1,\dots ,r_{16}\}$: 
\begin{equation*}
\begin{array}{llll}
  r_1    \coloneqq (-1,1,1,0),                               &
  r_2    \coloneqq (0,0,0,0),                                &
  r_3    \coloneqq (0,1,0,0),                                &
  r_4    \coloneqq \left(0,\tfrac16,0,\tfrac56\right),       \\[6pt]
  r_5    \coloneqq \left(0,0,0,\tfrac12\right),              &
  r_6    \coloneqq \left(0,0,\tfrac12,0\right),              &
  r_7    \coloneqq \left(-\tfrac12,\tfrac12,1,0\right),      &
  r_8    \coloneqq \left(-\tfrac12,\tfrac12,\tfrac12,\tfrac12\right), \\[6pt]
  r_9    \coloneqq (1,1,1,0),                                &
  r_{10} \coloneqq (1,0,0,0),                                &
  r_{11} \coloneqq (1,1,0,0),                                &
  r_{12} \coloneqq \left(\tfrac18,\tfrac18,0,\tfrac78\right), \\[6pt]
  r_{13} \coloneqq \left(\tfrac14,0,0,\tfrac34\right),       &
  r_{14} \coloneqq \left(\tfrac14,0,\tfrac34,0\right),       &
  r_{15} \coloneqq \left(\tfrac16,\tfrac16,1,0\right),       &
  r_{16} \coloneqq \left(\tfrac16,\tfrac16,\tfrac16,\tfrac56\right).
\end{array}
\end{equation*}
Its vertex-edge graph $G(R)$ decomposes as $\Gamma\square \cubegraph[1]$, and is displayed in \cref{fig:simple_counterex_graph}. Apart from the $\cubegraph[1]$-factor, the graph has an additional symmetry given by the involution $\phi$ that fixes the vertices $r_1,r_5,r_9,r_{13}$ and swaps
\[
r_2 \leftrightarrow r_8, \quad r_3\leftrightarrow r_7,\quad r_4\leftrightarrow r_6, \quad 
r_{10} \leftrightarrow r_{16}, \quad r_{11}\leftrightarrow r_{15}, \quad r_{12}\leftrightarrow r_{14}.
\]
This is however not a symmetry of $R$. Indeed, consider the vertices $r_3$ and $r_7$. They are both incident to four facets, but these have different numbers of vertices: the facets incident to $r_3$ have $6,8,8,10$ vertices, the facets incident to $r_7$ have $6,6,10,10$ vertices.
\begin{figure}[ht]
    \centering
    \begin{tikzpicture}[scale=0.8, line join=round, line cap=round]

\definecolor{left_color}{named}{Periwinkle}
\definecolor{right_color}{named}{PineGreen}

\tikzset{
  factor edge/.style={draw=black!42, line width=0.35pt},
  product edge/.style={draw=YellowOrange, line width=0.8pt},
  left vertex/.style={circle, fill=left_color, inner sep=1.5pt},
  right vertex/.style={circle, fill=right_color, inner sep=1.5pt}
}


\coordinate (v7)  at (0.0,  0.0);
\coordinate (v8)  at (1.2,  1.9);
\coordinate (v13) at (1.2, -1.9);
\coordinate (v2)  at (2.6,  3.1);
\coordinate (v12) at (2.6, -3.1);
\coordinate (v1)  at (4.0,  1.9);
\coordinate (v6)  at (4.0, -1.9);
\coordinate (v0)  at (5.2,  0.0);


\begin{scope}[xshift=8.2cm,yshift=1cm]
\coordinate (v9)  at (0.0,  0.0);
\coordinate (v10) at (1.2,  1.9);
\coordinate (v14) at (1.2, -1.9);
\coordinate (v4)  at (2.6,  3.1);
\coordinate (v15) at (2.6, -3.1);
\coordinate (v3)  at (4.0,  1.9);
\coordinate (v11) at (4.0, -1.9);
\coordinate (v5)  at (5.2,  0.0);
\end{scope}




\draw[factor edge] (v0)  -- (v1);
\draw[factor edge] (v0)  -- (v2);
\draw[factor edge] (v0)  -- (v6);
\draw[factor edge] (v0)  -- (v12);

\draw[factor edge] (v1)  -- (v2);
\draw[factor edge] (v1)  -- (v7);
\draw[factor edge] (v1)  -- (v13);

\draw[factor edge] (v2)  -- (v8);

\draw[factor edge] (v6)  -- (v7);
\draw[factor edge] (v6)  -- (v8);
\draw[factor edge] (v6)  -- (v12);

\draw[factor edge] (v7)  -- (v8);
\draw[factor edge] (v7)  -- (v13);

\draw[factor edge] (v12) -- (v13);

\draw[factor edge] (v5)  -- (v3);
\draw[factor edge] (v5)  -- (v4);
\draw[factor edge] (v5)  -- (v11);
\draw[factor edge] (v5)  -- (v15);

\draw[factor edge] (v3)  -- (v4);
\draw[factor edge] (v3)  -- (v9);
\draw[factor edge] (v3)  -- (v14);

\draw[factor edge] (v4)  -- (v10);

\draw[factor edge] (v11) -- (v9);
\draw[factor edge] (v11) -- (v10);
\draw[factor edge] (v11) -- (v15);

\draw[factor edge] (v9)  -- (v10);
\draw[factor edge] (v9)  -- (v14);

\draw[factor edge] (v15) -- (v14);


\draw[product edge] (v0)  -- (v5);
\draw[product edge] (v1)  -- (v3);
\draw[product edge] (v2)  -- (v4);
\draw[product edge] (v6)  -- (v11);
\draw[product edge] (v7)  -- (v9);
\draw[product edge] (v8)  -- (v10);
\draw[product edge] (v12) -- (v15);
\draw[product edge] (v13) -- (v14);


\node[left vertex]  at (v0)  {};
\node[left vertex]  at (v1)  {};
\node[left vertex]  at (v2)  {};
\node[left vertex]  at (v6)  {};
\node[left vertex]  at (v7)  {};
\node[left vertex]  at (v8)  {};
\node[left vertex]  at (v12) {};
\node[left vertex]  at (v13) {};

\node[right vertex] at (v5)  {};
\node[right vertex] at (v3)  {};
\node[right vertex] at (v4)  {};
\node[right vertex] at (v11) {};
\node[right vertex] at (v9)  {};
\node[right vertex] at (v10) {};
\node[right vertex] at (v15) {};
\node[right vertex] at (v14) {};




\node[below right=1pt]  at (v0)  {\scriptsize $r_1$};
\node[right=1pt]        at (v1)  {\scriptsize $r_2$};
\node[above=1pt]        at (v2)  {\scriptsize $r_3$};
\node[above left=1pt]   at (v8)  {\scriptsize $r_4$};
\node[left=1pt]         at (v7)  {\scriptsize $r_5$};
\node[below left=1pt]   at (v13) {\scriptsize $r_6$};
\node[below=1pt]        at (v12) {\scriptsize $r_7$};
\node[below right=1pt]  at (v6)  {\scriptsize $r_8$};

\node[below right=2pt]  at (v5)  {\scriptsize $r_9$};
\node[right=1pt]        at (v3)  {\scriptsize $r_{10}$};
\node[above=1pt]        at (v4)  {\scriptsize $r_{11}$};
\node[above left=1pt]   at (v10) {\scriptsize $r_{12}$};
\node[left=1pt]         at (v9)  {\scriptsize $r_{13}$};
\node[below left=1pt]   at (v14) {\scriptsize $r_{14}$};
\node[below=1pt]        at (v15) {\scriptsize $r_{15}$};
\node[below right=1pt]  at (v11) {\scriptsize $r_{16}$};

\end{tikzpicture}
    \caption{Four-dimensional polytope with two truncations (at $3$ and $7$) that give non-combinatorially equivalent polytopes with isomorphic vertex-edge graph.}
    \label{fig:simple_counterex_graph}
\end{figure}
The idea is to truncate $R$ near $r_3$ to get a polytope $P$, and near $r_7$ to get a polytope $Q$. Graph-theoretically, the truncations behave the same way hence giving
\[
G(P) \cong G(Q).
\]
Because the facets at $r_3$ and $r_7$ are combinatorially different, the truncations produce two polytopes that are not combinatorially equivalent. This is achieved for instance by the polytopes with vertices
\begin{align*}
\verts({P}) &= (\mathcal R \setminus\{r_3\}) \cup \left\lbrace \left(-\tfrac{1}{10},1,\tfrac{1}{10},0\right),
\left(0,\tfrac{9}{10},0,0\right),
\left(\tfrac{1}{10},1,0,0\right),
\left(0,\tfrac{11}{12},0,\tfrac{1}{12}\right)\right\rbrace, \\
\verts(Q) &= (\mathcal R\setminus\{r_7\}) \cup \left\lbrace \left(-\tfrac{11}{20},\tfrac{11}{20},1,0\right), \left(-\tfrac12,\tfrac12,\tfrac{19}{20},\tfrac1{20}\right), \left(-\tfrac9{20},\tfrac9{20},\tfrac{19}{20},0\right), \left(-\tfrac{13}{30},\tfrac7{15},1,0\right)\right\rbrace.
\end{align*}
They both have $f$-vector $(19, 42, 34, 11)$ and isomorphic vertex-edge graphs. However, they are not combinatorially equivalent. To see this, it is enough to compute the facet list to find that $P$ has facets with $10$ vertices, whereas $Q$ does not.
\end{example}

\begin{proposition}\label{prop:embed_and_glue}
    Let $P$ and $Q$ be two $(d-1)$-dimensional polytopes such that every vertex of $P$ and $Q$ is contained in at most $d$ facets. Then there exists a simple $d$-dimensional polytope $R$ having hyperplane sections combinatorially equivalent to $P$ and $Q$.
\end{proposition}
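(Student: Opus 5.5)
The plan is to realize each of $P$ and $Q$ separately as the middle slice of a simple $d$-polytope $R_P$, $R_Q$, obtained by \emph{tilting the facets of a prism}. Then glue $R_P$ and $R_Q$ by a connected sum at vertices far from the slicing hyperplanes.

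\textbf{Tilting construction.} Write $P=\set{x\in\R^{d-1}}{\langle a_i,x\rangle\le b_i,\ i=1,\dots,m}$ with irredundant facet inequalities. For reals $c_1,\dots,c_m$ and a small $\varepsilon>0$, set
\[
R_P=\set{(x,t)\in\R^{d-1}\times\R}{\langle a_i,x\rangle+c_it\le b_i\ (i\in[m]),\ -\varepsilon\le t\le\varepsilon}.
\]
By construction $R_P\cap\{t=0\}=P$, so it remains to choose $c$ and $\varepsilon$ making $R_P$ simple.

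\textbf{Vertices of $R_P$.} I would sort them by the value of $t$.
\begin{itemize}
\item On $t=\pm\varepsilon$, the facet is $\set{x}{\langle a_i,x\rangle\le b_i\mp c_i\varepsilon}$. This is a right-hand-side perturbation of $P$. For $c$ outside a finite union of hyperplanes and $\varepsilon$ small, it is a simple $(d-1)$-polytope with no vertex lying on an extra tilted facet. So these vertices have exactly $d$ incident facets.
\item For $|t|<\varepsilon$ and $\varepsilon$ small, every face of $R_P$ projects to a neighbourhood of a face of $P$. The only candidates for non-simple faces sit over vertices $v$ of $P$. Let $I(v)$ be the set of facets of $P$ at $v$. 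The hypothesis gives $|I(v)|\le d$, so $|I(v)|\in\{d-1,d\}$.
\item If $|I(v)|=d-1$, the set $\{(v,t)\}\cap R_P$ is an edge of $R_P$ through $(v,0)$ lying on exactly $d-1$ facets. Hence this edge is simple, and its endpoints on $t=\pm\varepsilon$ are simple by the previous item.
\item If $|I(v)|=d$, the $d$ normals $a_i$, $i\in I(v)$, satisfy a unique linear dependency $\sum\mu_ia_i=0$. Two conditions then make $(v,0)$ a simple vertex of $R_P$. First, we need $\sum_{i\in I(v)}\mu_ic_i\neq0$, so that the lifted normals $(a_i,c_i)$ are independent. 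Second, the entries $c_i$, $i\in I(v)$, must not all have the same sign (counting zero appropriately), so that the vertical line over $v$ meets $R_P$ only at $t=0$. Otherwise the segment over $v$ would be an edge lying on $d$ facets and $R_P$ would fail to be simple.
\end{itemize}

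\textbf{Main obstacle: the sign condition.} We need a single vector $c$ that has mixed signs on every set $I(v)$ with $|I(v)|=d$, and that is also generic. My first attempt would exploit the local structure. By the hypothesis and \cref{thm:Grunbaum}, $\vf_P(v)\cong\simp[p]\join(\simp[q]\times\simp[r])$ with $q,r\ge1$. So $I(v)$ splits into at least two nonempty blocks, and a single coloring cannot obviously serve all vertices. If a global $\pm$ assignment fails, I would make the construction non-uniform. Replace the constant $c_i$ by the affine tilt $c_i(x)=\langle g,x\rangle+h_i$; equivalently, use the slab formulation of \cref{rmk:slice_slab_correspondance} with a generic linear functional, so the sign of the tilt varies across $P$. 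Then I would choose $h_i$ to separate signs vertex by vertex. As a fallback, I would place the offending vertices on $t=0$ by adding, near each such $v$, one additional facet that truncates the edge over $v$ at a small height. A truncation of this kind introduces only simple vertices and keeps the slice at $t=0$ unchanged.

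\textbf{Gluing $R_P$ and $R_Q$.} Take any vertex of $R_P$ on $t=\varepsilon$ and any vertex of $R_Q$ on $t=-\varepsilon$; both are simple. Truncate each to create a simplex facet. The connected sum of simple polytopes along these projectively identified simplex facets is again simple. After a projective transformation that fixes neighbourhoods of the slicing hyperplanes, the two slices $P$ and $Q$ survive as hyperplane sections of the glued polytope $R$.
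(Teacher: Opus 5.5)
The step that fails is your analysis at a vertex $v$ of $P$ with $|I(v)|=d$. Because of it, the proposal never actually proves that $R_P$ is simple. You declare a sign condition on $(c_i)_{i\in I(v)}$ necessary, call it the main obstacle, and leave it unresolved. You offer only fallbacks, and one of them, $c_i(x)=\langle g,x\rangle+h_i$, does not even define a polyhedron, since $\langle g,x\rangle t$ is quadratic.

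The sign condition is in fact not needed. Suppose $\sum_{i\in I(v)}\mu_ic_i\neq 0$. Then the $d$ hyperplanes $\langle a_i,x\rangle+c_it=b_i$, $i\in I(v)$, have independent normals and meet only at $(v,0)$. At $(v,t)$ with $t\neq 0$, every such constraint with $c_i\neq 0$ is strict or violated, so the vertical segment over $v$ is never an edge lying on $d$ facets. If all $c_i>0$, the segment simply enters the interior of $R_P$, and $(v,0)$ is still a vertex on exactly $d$ facets.

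The same confusion appears for $|I(v)|=d-1$. The edge through $(v,0)$ is not $\{(v,t)\}\cap R_P$, which is the single point $(v,0)$ when the $c_i$ have mixed signs. It lies on the line through $(v,0)$ with direction $(w,1)$, where $\langle a_i,w\rangle=-c_i$ for $i\in I(v)$.

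What really drives the construction is the localization you only gesture at. By Farkas' lemma and finiteness, for $\varepsilon$ small the tilted constraints tight at any point of $R_P$ form a subset of $I(x_0)$ for some $x_0\in P$. This has three consequences:
\begin{enumerate}
\item At most $d$ tilted constraints are ever tight.
\item Hence vertices with $|t|<\varepsilon$ are automatically simple.
\item Having $d$ tight tilted constraints at a point of $t=\pm\varepsilon$ would force $x_0$ to be a vertex $v$ with $|I(v)|=d$ and $\sum_{i\in I(v)}\mu_i(b_i\mp c_i\varepsilon)=0$. Since $\sum_{i\in I(v)}\mu_i b_i=0$, this means $\sum\mu_ic_i=0$.
\end{enumerate}
So one linear condition per vertex of $P$ lying on $d$ facets suffices. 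With this correction, your slab is a valid and more local alternative to the paper's lift. The paper instead picks heights outside the subspaces $L_S$ for all $(d+1)$-subsets $S$, so that no point off $\{x_d=0\}$ lies on $d+1$ facets. It then bounds the possibly unbounded lift by a generic simplex.

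Your gluing is the same vertex connected sum as in the paper, but you should state what makes it simple. The $d$ facets adjacent to the two truncation simplices must become pairwise coplanar. Otherwise the vertices of the glued simplex remain vertices of the union lying on $2d-2$ facets. The paper realizes this concretely: it sends the chosen vertex of each piece to infinity, so that the $d$ facets through it become the common facets $\partial\Delta_{d-1}\times\mathbb{R}$, and then intersects with a far translate.

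Also, no projective map can fix a neighbourhood of $H_P$ unless it is the identity. What you need is that admissible projective maps carry sections to combinatorially equivalent sections, and that the glued-on piece misses $H_P$. The latter follows from convexity: $H_P$ misses the gluing facet $F$, and the union meets $\mathrm{aff}(F)$ only in $F$.
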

\begin{proof}

We first show that $P$ is a hyperplane section of a simple $d$-polytope. Write
\[
  P=\bigl\{x\in\R^{d-1}:\ \langle a_i,x\rangle\le b_i,\ i=1,\dots,m\bigr\},
\]
where $a_1,\dots,a_m$ are the facet normals of $P$.
Given heights $h=(h_1,\dots,h_m)\in\mathbb R^m$, set
\[
  R=\bigl\{(x,x_d)\in\mathbb R^{d-1}\times\R:\
      \langle a_i,x\rangle+h_ix_d\le b_i,\ i=1,\dots,m\bigr\} ,
\]
so that $R\cap\{x_d=0\}=P$.
Suppose that $d+1$ of the constraints, say those indexed by $S$, are tight at a point $(x,t)$
with $t\ne0$. Solving each of the equations $\langle a_i,x\rangle+h_it=b_i$ for $h_i$ gives
\[
  h_i=\frac{b_i-\langle a_i,x\rangle}{t}=\alpha b_i+\langle\beta,a_i\rangle
  \qquad (i\in S),\qquad \alpha=\frac1t\ne0,\quad \beta=-\frac xt .
\]
For a fixed $S$, the heights of this form constitute a linear subspace $L_S$ of $\R^m$. $L_S$ is a proper subspace of $\R^m$
because $(\alpha,\beta)$ runs over only $d < |S| = d+1$ parameters, while the remaining coordinates are free. As there are finitely
many such $S$, we may fix an $h \in \R^m$ lying in none of the subspaces $L_S, S \in \binom{[m]}{d+1}$.
Then, by construction of $h$, no point $(x,t) \in R$ with $t\neq 0$ belongs to more than $d$ facets of $R$. This is also true for $t=0$, because any point $(x,0) \in R$ satisfies $x \in P$, which is contained in at most $d$ facets by assumption.

We have shown that $R$ is simple, however, it may be unbounded. To address this, let $\Delta$ be a $d$-simplex containing $P$ in its interior, generic enough that no
$d+1$ of the hyperplanes bounding $R$ and $\Delta$ have a common point in $R\cap\Delta$. Then
$P'=R\cap\Delta$ is a simple $d$-polytope, by construction, and since $P\subseteq\Delta$,
\[
  P'\cap\{x_d=0\}=R\cap\{x_d=0\}\cap\Delta=P .
\]
Applying the same construction to $Q$ gives a simple $d$-polytope $Q'$ having $Q$ as a
hyperplane section.
It remains to realize the two sections inside a common simple polytope. Choose vertices $v\in P'$ and $w\in Q'$ away from $P$ and $Q$. Since $P'$ and $Q'$ are simple, exactly $d$ facets meet at each of $v$ and $w$.

We first describe explicitly a projective transformation sending $v$ to infinity. Let
\[
    \langle a_i, x\rangle \leq b_i,\qquad i=1,\ldots,d,
\]
be the inequalities defining the $d$ facets of $P'$ containing $v$, and let $s_i(x)=b_i-\langle a_i, x\rangle$ be their slack functions. Then $s_i\geq 0$ on $P'$, and $s_1(x)=\cdots=s_d(x)=0$ holds on $P'$ precisely at $v$. Set
\[
    D(x)=s_1(x)+\cdots+s_d(x).
\]
Then $D(x)>0$ on $P'\setminus\{v\}$. Consider the projective transformation mapping $x\in\R^d$ to $(y,t)\in \R^{d-1}\times\R$ via
\[
    y_i=\frac{s_i(x)}{D(x)},\quad i=1,\ldots,d-1,
    \qquad
    t=\frac{1}{D(x)}.
\]
As $x$ approaches $v$, one has $D(x)\to 0$ and hence $t\to+\infty$, so this transformation sends $v$ to infinity.
The image of $P'$ under this transformation is
\[
\widetilde P' = \{(y,t):y\in\simp[d-1],\ t\geq f(y)\}
\]
where $f$ is the maximum of finitely many affine functions, each corresponding to a facet not containing $v$, and is therefore convex and piecewise linear. We apply the same construction to $Q'$ at $w$. After reflecting the last coordinate, its image has the form
\[
    \widetilde Q' = \{(y,t):y\in\simp[d-1],\ t\leq g(y)\},
\]
where $g$ is concave and piecewise linear. 

For $L>0$, translate $\widetilde Q'$ by $L \cdot e_d$, and denote the translated polyhedron by $\widetilde Q'_L$. Let $R_L = \widetilde P'\cap\widetilde Q'_L$. Since $\simp[d-1]$ is compact, for $L$ sufficiently large we have $f(y)<g(y)+L$ for every $y\in\simp[d-1]$. Thus the finite lower boundary coming from $P'$ and the finite upper boundary coming from $Q'$ are disjoint. In particular, the intersection does not cut any part of either finite boundary. The vertices of $R_L$ are precisely the finite vertices of $\widetilde P'$ and $\widetilde Q'_L$, namely the vertices of $P'$ different from $v$ and the vertices of $Q'$ different from $w$. Thus, $R_L$ is simple.

Since projective transformations send hyperplanes to hyperplanes, and the slice $P$ does not contain $v$, then the projective transformation sends $P$ to 
\[
\widetilde P=\widetilde P'\cap H_P,
\]
for some hyperplane $H_P$. Because we chose $L$ sufficiently large, we can ensure that the compact set $\widetilde P$ is entirely contained in $\widetilde Q'_L$. Therefore,
\[
    R_L\cap H_P = \widetilde P'\cap\widetilde Q'_L\cap H_P = \widetilde P'\cap H_P = \widetilde P\cong P.
\]
The same argument applies to $Q$. Taking $R=R_L$ for $L$ sufficiently large proves the claim.
\end{proof}

An implementation of the above proof strategy, using as starting points the polytopes $P$ and $Q$ from \cref{ex:simple_slices_counterexample}, allows us to find an explicit example of a simple polytope with two slices that are not combinatorially equivalent but have isomorphic graphs. 
\begin{example}\label{ex:simple_counterexample}
    Let $R$ be the $5$-dimensional polytope with $120$ vertices provided in \cref{app:coordinates}. A standard implementation in a computer algebra software such as \texttt{SageMath} \cite{sagemath} or \texttt{Oscar.jl} \cite{OSCAR,OSCAR-book} shows that $R$ is simple. The slices of $R$ by the hyperplanes 
    \[
    H_1 = \set{x\in \R^5}{x_1+x_2+x_3+x_4+x_5=1}, \qquad H_2 = \set{x\in \R^5}{x_1+x_2+x_3+x_4-x_5=-15},
    \]
    produce the $4$-dimensional polytopes $P$ and $Q$ respectively. Using a computer algebra software, one can check that they are combinatorially equivalent to the polytopes in \cref{ex:simple_slices_counterexample}, hence
    \[
    P \not\cong Q, \qquad \text{and} \qquad G(P)\cong G(Q).
    \]
    Therefore, the slices of the simple polytope $R$ cannot be reconstructed from their graphs.
\end{example}

\begin{proof}[Proof of \cref{thm:slice_simple_polytope}]
    \cref{ex:simple_counterexample} provides an explicit example of a simple $5$-dimensional polytope with two $4$-dimensional slices that cannot be reconstructed from their vertex-edge graphs. This is the minimal-dimension case in which such a phenomenon can happen (because $3$-dimensional polytopes are always reconstructible). Moreover, for higher dimension $d$ it suffices to take $R_d = R\times \cube[d-5]$ to obtain a similar example in $\R^d$. This completes the proof.
\end{proof}

\printbibliography

\vfill
\noindent \textsc{Marie-Charlotte Brandenburg} \\
\textsc{Ruhr-Universität Bochum} \\
 \url{marie-charlotte.brandenburg@rub.de} \\

\noindent \textsc{Juliana Curtis} \\
\textsc{University of Washington} \\
 \url{jacca@uw.edu} \\

\noindent \textsc{Chiara Giardino} \\
\textsc{Ruhr-Universität Bochum} \\ 
\url{chiara.giardino@rub.de} \\

\noindent \textsc{Chiara Meroni} \\
\textsc{ETH Institute for Theoretical Studies} \\ 
\url{chiara.meroni@eth-its.ethz.ch} \\

\noindent \textsc{Arnau Padrol} \\
\textsc{Universitat de Barcelona \& Centre de Recerca Matem\`atica} \\ 
\url{arnau.padrol@ub.edu} \\

\newpage
\appendix

\section{Coordinates of the simple polytope with non-reconstructible slices}
\label{app:coordinates}

The $5$-dimensional polytope $R$ from \cref{ex:simple_counterexample} has $120$ vertices. Their coordinates are listed below.

\begingroup
\fontsize{8}{8}\selectfont
\setlength{\tabcolsep}{8pt}
\renewcommand{\arraystretch}{1.8}
\let\frac\tfrac

\begin{longtable}{
@{}r@{\quad}l@{\qquad}
r@{\quad}l@{\qquad}
r@{\quad}l@{}
}
\textbf{\#} & \textbf{vertex}
& \textbf{\#} & \textbf{vertex}
& \textbf{\#} & \textbf{vertex}\\
\hline
\endfirsthead

\textbf{\#} & \textbf{vertex}
& \textbf{\#} & \textbf{vertex}
& \textbf{\#} & \textbf{vertex}\\
\hline
\endhead

1 &
$\left(\frac12,0,\frac12,0,5\right)$
& 41 &
$\left(0,\frac{24}{43},\frac{19}{43},0,\frac{668}{43}\right)$
& 81 &
$\left(0,\frac7{29},0,\frac{12}{29},\frac5{29}\right)$
\\

2 &
$\left(\frac{14}{23},0,0,\frac9{23},\frac{348}{23}\right)$
& 42 &
$\left(0,1,0,0,\frac{91}{6}\right)$
& 82 &
$\left(\frac37,0,0,0,\frac{108}{7}\right)$
\\

3 &
$\left(\frac5{14},0,\frac1{14},\frac17,\frac37\right)$
& 43 &
$\left(0,\frac12,\frac38,0,\frac{11}{8}\right)$
& 83 &
$\left(0,\frac1{12},\frac14,\frac4{21},\frac{331}{21}\right)$
\\

4 &
$\left(\frac12,0,\frac1{22},0,\frac{10}{11}\right)$
& 44 &
$\left(0,\frac9{23},\frac8{23},0,\frac{16}{23}\right)$
& 84 &
$\left(\frac15,0,0,\frac4{105},\frac{328}{21}\right)$
\\

5 &
$\left(\frac17,\frac17,\frac17,\frac27,\frac{110}{7}\right)$
& 45 &
$\left(0,\frac35,\frac15,0,\frac65\right)$
& 85 &
$\left(0,0,0,0,15\right)$
\\

6 &
$\left(\frac14,0,0,\frac14,\frac{31}{2}\right)$
& 46 &
$\left(0,\frac37,\frac27,0,\frac9{14}\right)$
& 86 &
$\left(\frac3{25},\frac7{75},\frac4{25},\frac7{75},\frac{236}{15}\right)$
\\

7 &
$\left(0,0,0,1,\frac{348}{23}\right)$
& 47 &
$\left(0,\frac{24}{85},\frac{19}{85},\frac{42}{85},\frac{268}{17}\right)$
& 87 &
$\left(0,\frac1{21},0,0,\frac{326}{21}\right)$
\\

8 &
$\left(\frac{13}{25},0,0,0,\frac{22}{25}\right)$
& 48 &
$\left(0,\frac{24}{47},0,\frac{23}{47},\frac{732}{47}\right)$
& 88 &
$\left(0,\frac8{35},\frac15,0,\frac{110}{7}\right)$
\\

9 &
$\left(0,0,\frac{19}{36},\frac{17}{36},\frac{139}{9}\right)$
& 49 &
$\left(0,0,\frac7{36},\frac{19}{36},\frac5{36}\right)$
& 89 &
$\left(0,\frac4{21},\frac14,\frac1{12},\frac{331}{21}\right)$
\\

10 &
$\left(0,\frac4{33},\frac7{33},\frac{26}{99},\frac{1564}{99}\right)$
& 50 &
$\left(0,\frac15,\frac15,\frac25,\frac{79}{5}\right)$
& 90 &
$\left(0,0,\frac16,\frac{25}{126},\frac{988}{63}\right)$
\\

11 &
$\left(1,0,0,0,10\right)$
& 51 &
$\left(0,0,\frac{19}{37},\frac{18}{37},\frac{572}{37}\right)$
& 91 &
$\left(0,0,0,\frac1{21},\frac{326}{21}\right)$
\\

12 &
$\left(\frac8{29},0,\frac3{29},\frac{18}{29},\frac{444}{29}\right)$
& 52 &
$\left(0,0,\frac3{29},\frac{26}{29},\frac{444}{29}\right)$
& 92 &
$\left(0,0,0,0,\frac94\right)$
\\

13 &
$\left(1,0,0,0,\frac{552}{37}\right)$
& 53 &
$\left(0,0,\frac{11}{37},\frac{26}{37},\frac{572}{37}\right)$
& 93 &
$\left(\frac5{24},\frac1{12},0,\frac7{24},\frac5{24}\right)$
\\

14 &
$\left(\frac{38}{51},\frac5{51},0,\frac8{51},\frac{776}{51}\right)$
& 54 &
$\left(0,0,0,1,5\right)$
& 94 &
$\left(\frac16,\frac16,0,\frac14,\frac5{24}\right)$
\\

15 &
$\left(\frac{18}{19},0,\frac1{19},0,\frac{284}{19}\right)$
& 55 &
$\left(0,1,0,0,5\right)$
& 95 &
$\left(0,\frac13,0,\frac14,\frac5{24}\right)$
\\

16 &
$\left(\frac{29}{48},\frac5{48},0,\frac1{12},\frac{123}{8}\right)$
& 56 &
$\left(0,0,\frac12,\frac{19}{42},\frac{325}{21}\right)$
& 96 &
$\left(0,\frac7{48},\frac38,\frac1{16},\frac5{24}\right)$
\\

17 &
$\left(\frac{19}{28},\frac5{28},0,\frac17,\frac{107}{7}\right)$
& 57 &
$\left(0,\frac{17}{36},\frac{19}{36},0,\frac{139}{9}\right)$
& 97 &
$\left(\frac{17}{72},\frac1{36},\frac5{36},\frac{13}{72},\frac5{24}\right)$
\\

18 &
$\left(\frac{38}{39},\frac1{39},0,0,\frac{584}{39}\right)$
& 58 &
$\left(\frac16,0,0,\frac5{54},\frac{422}{27}\right)$
& 98 &
$\left(\frac16,\frac16,\frac5{48},\frac7{48},\frac5{24}\right)$
\\

19 &
$\left(\frac3{11},\frac3{11},\frac1{11},0,\frac9{11}\right)$
& 59 &
$\left(\frac15,\frac1{20},0,\frac1{12},\frac{47}{3}\right)$
& 99 &
$\left(0,\frac13,\frac3{16},\frac1{16},\frac5{24}\right)$
\\

20 &
$\left(\frac5{18},\frac29,\frac16,0,\frac89\right)$
& 60 &
$\left(\frac4{21},\frac4{63},\frac1{63},\frac2{21},\frac{988}{63}\right)$
& 100 &
$\left(\frac5{24},0,0,\frac38,\frac5{24}\right)$
\\

21 &
$\left(\frac{19}{24},\frac5{24},0,0,\frac{91}{6}\right)$
& 61 &
$\left(0,0,0,\frac12,\frac{31}{2}\right)$
& 101 &
$\left(0,\frac12,0,0,\frac12\right)$
\\

22 &
$\left(\frac8{53},\frac8{53},\frac{11}{53},\frac{26}{53},\frac{828}{53}\right)$
& 62 &
$\left(0,\frac1{20},0,\frac{17}{60},\frac{47}{3}\right)$
& 102 &
$\left(\frac14,0,\frac1{24},\frac7{24},\frac5{24}\right)$
\\

23 &
$\left(\frac{11}{111},\frac{16}{111},\frac9{37},\frac{19}{37},\frac{1736}{111}\right)$
& 63 &
$\left(0,0,0,\frac7{27},\frac{422}{27}\right)$
& 103 &
$\left(0,\frac7{18},\frac16,0,\frac{11}{36}\right)$
\\

24 &
$\left(\frac13,\frac19,0,\frac19,\frac49\right)$
& 64 &
$\left(0,\frac13,\frac13,0,\frac{47}{3}\right)$
& 104 &
$\left(0,\frac5{36},\frac5{12},0,\frac{11}{36}\right)$
\\

25 &
$\left(\frac5{14},\frac3{14},0,0,\frac{11}{14}\right)$
& 65 &
$\left(0,0,\frac17,\frac29,\frac{988}{63}\right)$
& 105 &
$\left(\frac27,0,\frac3{28},\frac5{28},\frac14\right)$
\\

26 &
$\left(0,0,\frac12,\frac12,\frac52\right)$
& 66 &
$\left(\frac{41}{169},\frac7{169},\frac1{169},0,\frac{2644}{169}\right)$
& 106 &
$\left(\frac4{11},0,\frac1{44},\frac7{44},\frac{15}{44}\right)$
\\

27 &
$\left(\frac{37}{60},\frac1{12},0,0,\frac{461}{30}\right)$
& 67 &
$\left(0,\frac{16}{99},\frac{13}{99},\frac{10}{33},\frac{1564}{99}\right)$
& 107 &
$\left(\frac14,0,\frac14,0,\frac12\right)$
\\

28 &
$\left(0,0,1,0,5\right)$
& 68 &
$\left(0,\frac{16}{107},\frac{21}{107},\frac{30}{107},\frac{1692}{107}\right)$
& 108 &
$\left(\frac5{11},0,0,0,\frac{29}{44}\right)$
\\

29 &
$\left(0,\frac12,\frac12,0,\frac52\right)$
& 69 &
$\left(\frac9{37},\frac4{111},0,0,\frac{1736}{111}\right)$
& 109 &
$\left(\frac{25}{64},0,0,\frac9{64},\frac{25}{64}\right)$
\\

30 &
$\left(0,0,\frac14,\frac12,\frac14\right)$
& 70 &
$\left(0,\frac{16}{73},0,\frac{17}{73},\frac{1148}{73}\right)$
& 110 &
$\left(\frac7{22},\frac1{11},0,\frac3{22},\frac{15}{44}\right)$
\\

31 &
$\left(\frac{55}{224},\frac9{224},0,0,\frac{219}{14}\right)$
& 71 &
$\left(0,\frac27,0,0,\frac{219}{14}\right)$
& 111 &
$\left(\frac{17}{48},\frac1{24},0,\frac7{48},\frac{17}{48}\right)$
\\

32 &
$\left(0,0,\frac14,\frac{17}{36},\frac5{36}\right)$
& 72 &
$\left(0,0,1,0,6\right)$
& 112 &
$\left(\frac14,\frac14,0,0,\frac12\right)$
\\

33 &
$\left(\frac{38}{181},\frac{10}{181},0,\frac{13}{181},\frac{2836}{181}\right)$
& 73 &
$\left(\frac4{95},\frac{12}{95},\frac{17}{95},\frac{22}{95},\frac{300}{19}\right)$
& 113 &
$\left(0,0,\frac12,0,\frac12\right)$
\\

34 &
$\left(0,\frac34,0,0,\frac32\right)$
& 74 &
$\left(\frac5{13},\frac1{13},\frac1{13},0,\frac{202}{13}\right)$
& 114 &
$\left(\frac14,0,\frac18,\frac5{24},\frac5{24}\right)$
\\

35 &
$\left(0,\frac13,0,\frac13,\frac13\right)$
& 75 &
$\left(0,0,0,\frac{19}{29},\frac5{29}\right)$
& 115 &
$\left(0,0,\frac38,\frac5{24},\frac5{24}\right)$
\\

36 &
$\left(0,\frac{47}{379},\frac{91}{379},\frac{131}{379},\frac{5974}{379}\right)$
& 76 &
$\left(\frac{17}{19},0,\frac2{19},0,\frac{284}{19}\right)$
& 116 &
$\left(0,0,\frac1{15},\frac{31}{60},\frac5{24}\right)$
\\

37 &
$\left(0,\frac3{16},\frac{19}{72},\frac{79}{144},\frac{283}{18}\right)$
& 77 &
$\left(0,\frac27,\frac{13}{56},\frac18,\frac5{28}\right)$
& 117 &
$\left(0,0,0,\frac{39}{64},\frac{25}{128}\right)$
\\

38 &
$\left(0,\frac5{89},\frac{27}{89},\frac{57}{89},\frac{1384}{89}\right)$
& 78 &
$\left(0,\frac{19}{89},\frac{27}{89},\frac{13}{89},\frac{15}{89}\right)$
& 118 &
$\left(0,0,0,\frac14,\frac{11}{8}\right)$
\\

39 &
$\left(0,\frac{13}{150},\frac{19}{50},\frac8{15},\frac{78}{5}\right)$
& 79 &
$\left(\frac{19}{23},0,\frac2{23},0,\frac{346}{23}\right)$
& 119 &
$\left(0,\frac1{15},0,\frac{31}{60},\frac5{24}\right)$
\\

40 &
$\left(0,\frac{16}{77},\frac{19}{77},\frac6{11},\frac{1212}{77}\right)$
& 80 &
$\left(0,\frac27,0,\frac5{14},\frac5{28}\right)$
& 120 &
$\left(\frac1{18},0,0,\frac{19}{36},\frac5{24}\right)$
\\

\end{longtable}
\endgroup

~\\
The $4$-dimensional slice $P$ cut out by the hyperplane $x_1+x_2+x_3+x_4+x_5=1$ contains the vertices of the original polytope indexed by 
\[
3,\ 24,\ 30,\ 35,\ 101,\ 107,\ 112,\ 113,
\]
and intersects the relative interiors of the edges 
\[
[8,109],\ [44,78],\ [44,104],\ [46,77],\ [46,103],\ [54,75],\ [108,109],\ [116,118],\ [117,118],\ [118,119],\ [118,120].
\]

The $4$-dimensional slice $Q$ cut out by the hyperplane $x_1+x_2+x_3+x_4-x_5=-15$ contains the vertices of the original polytope indexed by 
\[
5,\ 6,\ 50,\ 61,\ 64,\ 74,\ 82,\ 85
\]
and intersects the relative interiors of the edges 
\[
[16,33],\ [23,36],\ [27,31],\ [36,37],\ [36,38],\ [36,39],\ [42,71],\ [48,70],\ [53,65],\ [56,83],\ [56,90].
\]

~\\

To facilitate the reproducibility of the computations, we provide below the coordinates of the $120$ vertices of the $5$-dimensional polytope in a format that can be used directly as input for a computer algebra system:\\
~\\
{\scriptsize \texttt{[[1/2, 0, 1/2, 0, 5], [14/23, 0, 0, 9/23, 348/23], [5/14, 0, 1/14, 1/7, 3/7], [1/2, 0, 1/22, 0, 10/11], [1/7, 1/7, 1/7, 2/7, 110/7], [1/4, 0, 0, 1/4, 31/2], [0, 0, 0, 1, 348/23], [13/25, 0, 0, 0, 22/25], [0, 0, 19/36, 17/36, 139/9], [0, 4/33, 7/33, 26/99, 1564/99], [1, 0, 0, 0, 10], [8/29, 0, 3/29, 18/29, 444/29], [1, 0, 0, 0, 552/37], [38/51, 5/51, 0, 8/51, 776/51], [18/19, 0, 1/19, 0, 284/19], [29/48, 5/48, 0, 1/12, 123/8], [19/28, 5/28, 0, 1/7, 107/7], [38/39, 1/39, 0, 0, 584/39], [3/11, 3/11, 1/11, 0, 9/11], [5/18, 2/9, 1/6, 0, 8/9], [19/24, 5/24, 0, 0, 91/6], [8/53, 8/53, 11/53, 26/53, 828/53], [11/111, 16/111, 9/37, 19/37, 1736/111], [1/3, 1/9, 0, 1/9, 4/9], [5/14, 3/14, 0, 0, 11/14], [0, 0, 1/2, 1/2, 5/2], [37/60, 1/12, 0, 0, 461/30], [0, 0, 1, 0, 5], [0, 1/2, 1/2, 0, 5/2], [0, 0, 1/4, 1/2, 1/4], [55/224, 9/224, 0, 0, 219/14], [0, 0, 1/4, 17/36, 5/36], [38/181, 10/181, 0, 13/181, 2836/181], [0, 3/4, 0, 0, 3/2], [0, 1/3, 0, 1/3, 1/3], [0, 47/379, 91/379, 131/379, 5974/379], [0, 3/16, 19/72, 79/144, 283/18], [0, 5/89, 27/89, 57/89, 1384/89], [0, 13/150, 19/50, 8/15, 78/5], [0, 16/77, 19/77, 6/11, 1212/77], [0, 24/43, 19/43, 0, 668/43], [0, 1, 0, 0, 91/6], [0, 1/2, 3/8, 0, 11/8], [0, 9/23, 8/23, 0, 16/23], [0, 3/5, 1/5, 0, 6/5], [0, 3/7, 2/7, 0, 9/14], [0, 24/85, 19/85, 42/85, 268/17], [0, 24/47, 0, 23/47, 732/47], [0, 0, 7/36, 19/36, 5/36], [0, 1/5, 1/5, 2/5, 79/5], [0, 0, 19/37, 18/37, 572/37], [0, 0, 3/29, 26/29, 444/29], [0, 0, 11/37, 26/37, 572/37], [0, 0, 0, 1, 5], [0, 1, 0, 0, 5], [0, 0, 1/2, 19/42, 325/21], [0, 17/36, 19/36, 0, 139/9], [1/6, 0, 0, 5/54, 422/27], [1/5, 1/20, 0, 1/12, 47/3], [4/21, 4/63, 1/63, 2/21, 988/63], [0, 0, 0, 1/2, 31/2], [0, 1/20, 0, 17/60, 47/3], [0, 0, 0, 7/27, 422/27], [0, 1/3, 1/3, 0, 47/3], [0, 0, 1/7, 2/9, 988/63], [41/169, 7/169, 1/169, 0, 2644/169], [0, 16/99, 13/99, 10/33, 1564/99], [0, 16/107, 21/107, 30/107, 1692/107], [9/37, 4/111, 0, 0, 1736/111], [0, 16/73, 0, 17/73, 1148/73], [0, 2/7, 0, 0, 219/14], [0, 0, 1, 0, 6], [4/95, 12/95, 17/95, 22/95, 300/19], [5/13, 1/13, 1/13, 0, 202/13], [0, 0, 0, 19/29, 5/29], [17/19, 0, 2/19, 0, 284/19], [0, 2/7, 13/56, 1/8, 5/28], [0, 19/89, 27/89, 13/89, 15/89], [19/23, 0, 2/23, 0, 346/23], [0, 2/7, 0, 5/14, 5/28], [0, 7/29, 0, 12/29, 5/29], [3/7, 0, 0, 0, 108/7], [0, 1/12, 1/4, 4/21, 331/21], [1/5, 0, 0, 4/105, 328/21], [0, 0, 0, 0, 15], [3/25, 7/75, 4/25, 7/75, 236/15], [0, 1/21, 0, 0, 326/21], [0, 8/35, 1/5, 0, 110/7], [0, 4/21, 1/4, 1/12, 331/21], [0, 0, 1/6, 25/126, 988/63], [0, 0, 0, 1/21, 326/21], [0, 0, 0, 0, 9/4], [5/24, 1/12, 0, 7/24, 5/24], [1/6, 1/6, 0, 1/4, 5/24], [0, 1/3, 0, 1/4, 5/24], [0, 7/48, 3/8, 1/16, 5/24], [17/72, 1/36, 5/36, 13/72, 5/24], [1/6, 1/6, 5/48, 7/48, 5/24], [0, 1/3, 3/16, 1/16, 5/24], [5/24, 0, 0, 3/8, 5/24], [0, 1/2, 0, 0, 1/2], [1/4, 0, 1/24, 7/24, 5/24], [0, 7/18, 1/6, 0, 11/36], [0, 5/36, 5/12, 0, 11/36], [2/7, 0, 3/28, 5/28, 1/4], [4/11, 0, 1/44, 7/44, 15/44], [1/4, 0, 1/4, 0, 1/2], [5/11, 0, 0, 0, 29/44], [25/64, 0, 0, 9/64, 25/64], [7/22, 1/11, 0, 3/22, 15/44], [17/48, 1/24, 0, 7/48, 17/48], [1/4, 1/4, 0, 0, 1/2], [0, 0, 1/2, 0, 1/2], [1/4, 0, 1/8, 5/24, 5/24], [0, 0, 3/8, 5/24, 5/24], [0, 0, 1/15, 31/60, 5/24], [0, 0, 0, 39/64, 25/128], [0, 0, 0, 1/4, 11/8], [0, 1/15, 0, 31/60, 5/24], [1/18, 0, 0, 19/36, 5/24]]}}

\end{document}